\documentclass[11pt]{article}
\usepackage[T1]{fontenc}
\usepackage{lmodern}
\usepackage[margin=1in]{geometry}
\usepackage{amsmath,amssymb,amsthm,mathtools,mathrsfs}
\usepackage{booktabs,array,longtable}
\usepackage[hidelinks]{hyperref}
\usepackage{microtype}
\allowdisplaybreaks
\numberwithin{equation}{section}
\newcommand{\ii}{\mathrm i}
\newcommand{\e}{\mathrm e}
\newcommand{\Rea}{\operatorname{Re}}
\newcommand{\Ima}{\operatorname{Im}}
\newcommand{\Li}{\operatorname{Li}}
\newcommand{\sgn}{\operatorname{sgn}}
\newtheorem{theorem}{Theorem}[section]
\newtheorem{lemma}[theorem]{Lemma}
\newtheorem{proposition}[theorem]{Proposition}

\theoremstyle{remark}

\hypersetup{pdftitle={A proof of the Berkovich--Dhar conjecture modulo five},pdfauthor={Shutao Jiang}}
\title{A proof of the Berkovich--Dhar conjecture modulo five}
\author{Shutao Jiang\\[3pt]
{\small East China Normal University}\\
{\small\href{mailto:51290147024@stu.ecnu.edu.cn}{\texttt{51290147024@stu.ecnu.edu.cn}}}}
\date{}
\begin{document}
\maketitle
\begin{abstract}

We prove the sign-pattern and limiting-transition assertions of the Berkovich--Dhar conjecture modulo five, covering both the square and the cube of the finite Borwein product. In each power, the coefficients in residue class zero are strictly positive, while the sequences in residue classes three and four have exactly one positive-to-negative sign change after zero terms are omitted. We determine all four transition constants, correct the preliminary numerical estimates in the conjecture, and give their linear corrections with explicit bounded errors. A common four-arc expansion governs both powers. Near each vanishing amplitude, strict negativity of a weighted adjacent difference determines the transition. The cubic case also has two identically vanishing components in the infinite product; a shifted saddle expansion resolves the resulting finite-product boundary terms. Positive series identities, uniform remainder estimates, and an exact integer recurrence complete the proof for every positive integer.

\end{abstract}

\noindent\textbf{2020 Mathematics Subject Classification.} 05A16, 05A17, 05A30.\\
\textbf{Keywords.} Borwein polynomial, coefficient signs, sign change, saddle point method, Rogers--Ramanujan identities.

\section{Introduction and main results}

Sign patterns of finite products connect partition identities with asymptotic analysis. The Borwein conjectures, recorded by Andrews \cite{Andrews}, provide classical examples. Wang and Krattenthaler \cite{WK} developed a direct saddle point approach to such problems. Motivated by this work, Berkovich and Dhar \cite{BD} proposed further sign patterns for powers of finite products modulo three and five. The present paper resolves the structural assertions and limiting-transition assertions in their Conjecture 2.2.

For integers \(n\ge1\) and \(p\in\{2,3\}\), write

\begin{equation}\label{eq:product}
 Q_n(q)=\prod_{k=0}^{n-1}\prod_{a=1}^4(1-q^{5k+a}),\qquad
 Q_n(q)^p=\sum_{d=0}^{10pn^2}c_d^{(p)}(n)q^d.
\end{equation}

The coefficients satisfy

\begin{equation}\label{eq:reciprocity}
 c_d^{(p)}(n)=c_{10pn^2-d}^{(p)}(n).
\end{equation}

Thus the sequences in residue classes one and two are the reversals of those in classes four and three. Nonnegativity in residue class zero for both powers was already conjectured in \cite[Remark 11.6(2)]{WK}; see also \cite[Remark 3]{BD}. Our first theorem establishes strict positivity and the two single-transition assertions simultaneously.

\begin{theorem}\label{thm:main}

For every \(n\ge1\) and \(p\in\{2,3\}\),

\begin{equation}\label{eq:positive-zero}
 c_{5m}^{(p)}(n)>0\qquad(0\le m\le2pn^2).
\end{equation}

For \(j\in\{3,4\}\), let \(K_{p,j}(n)\) be the last index at which \(c_{5m+j}^{(p)}(n)>0\). Then

\begin{equation}\label{eq:main-pattern}
 \begin{aligned}
 c_{5m+j}^{(p)}(n)&\ge0&& (0\le m\le K_{p,j}(n)),\\
 c_{5m+j}^{(p)}(n)&\le0&& (K_{p,j}(n)<m\le2pn^2-1).
 \end{aligned}
\end{equation}

After zero terms are omitted, each sequence changes sign exactly once, from positive to negative.

\end{theorem}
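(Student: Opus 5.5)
We will prove the theorem with a saddle point analysis of the coefficient integrals, in the spirit of Wang--Krattenthaler \cite{WK}, plus a finite computation for small \(n\). For a residue class \(j\), the generating function of \(c_{5m+j}^{(p)}(n)\) is obtained by the standard root-of-unity filter. Writing \(\zeta=\e^{2\pi\ii/5}\), we have
\[
 c_{5m+j}^{(p)}(n)=\frac15\sum_{r=0}^{4}\zeta^{-rj}\,\frac{1}{2\pi\ii}\oint Q_n(\zeta^r z)^p\,z^{-5m-j-1}\,dz .
\]
We will evaluate each contour integral on a circle \(|z|=\rho\), with \(\rho\) chosen as the saddle point of \(|Q_n(\zeta^r z)|^p z^{-5m}\).

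The key structural input is the modular behaviour of \(Q_\infty\) near fifth roots of unity. Its logarithm \(\sum_{k}\log(1-q^{5k+a})\) can be written with dilogarithm sums \(\sum_a \Li_2(\zeta^{ra}\cdots)\), so that \(\log Q_n(q)\) is expressed as a main Euler--Maclaurin term plus explicit corrections. We will use this to show that, uniformly in \(m/n^2\in[0,2p]\), the integral is dominated by four arcs, namely those near \(\zeta^{\pm1}\) and \(\zeta^{\pm2}\). The arc near \(z=1\) is negligible because \(Q_n\) vanishes to high order there. On each arc we expand \(\log Q_n\) to second order around the saddle. This gives
\[
 c_{5m+j}^{(p)}(n)=\frac{\e^{n\Phi(m/n^2)}}{n^{\,\cdot}}\Bigl(\sum_{\text{arcs}}A_{p,j}(t)\cos(\theta_{p,j}(t)\,n+\cdots)+O(n^{-1})\Bigr),\qquad t=m/n^2 .
\]
The amplitudes \(A_{p,j}\) are trigonometric combinations of \(\zeta^{rj}\) times Rogers--Ramanujan type constants.

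For \(j=0\) we will show that the combined amplitude is bounded below by a positive constant on all of \([0,2p]\). This uses the positive series identities for the relevant \(q\)-products, so positivity follows for \(n\ge n_0\). Near the endpoints \(t\to0\) and \(t\to2p\) the saddle approaches the boundary, and there we instead use small-\(m\) direct expansions combined with the reciprocity \eqref{eq:reciprocity}.

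For \(j=3,4\), the amplitude has exactly one zero \(t_{p,j}^*\). Away from \(t^*\) the sign is fixed by the leading term. The hard step is a window of width \(O(1/n)\) around \(t^*\), where the leading term degenerates and a naive expansion allows several sign changes. Our plan there is not to control the sign of \(c\) directly, but to show that a weighted adjacent difference \(c_{5(m+1)+j}-\lambda_m c_{5m+j}\), with \(\lambda_m>0\) chosen from the saddle ratio, is strictly negative throughout the window. This is a derivative-type quantity whose leading amplitude is \(A'(t^*)\neq0\). It forces the sequence to cross zero at most once and in the positive-to-negative direction, and it also explains why zero terms may occur while the sign pattern \eqref{eq:main-pattern} is preserved.

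In the cubic case two components of the infinite product vanish identically. The finite-\(n\) boundary terms then become the leading contribution, and we will handle them with a shifted saddle expansion. We expect this, together with making every remainder explicit and uniform, to be the main obstacle. Finally, the explicit bounds yield a concrete \(n_0\). For \(n<n_0\) we will verify \eqref{eq:positive-zero} and \eqref{eq:main-pattern} by exact computation, using an integer recurrence in \(n\) for the coefficients of \(Q_n^p\).
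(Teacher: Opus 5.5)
Your plan follows the paper's route essentially step for step. The ingredients match: a common four-arc saddle expansion whose folded amplitude has a simple zero in classes three and four, and a fixed-radius weighted difference \(c_{5(m+1)+j}-\e^{5t_{p,j}/n}c_{5m+j}<0\) on an \(O(n)\) window about \(\alpha_{p,j}n^2\). They also include dissection and positive-series arguments plus a shifted saddle for the cubic vanishing classes at the edge, reciprocity for the upper half, and an integer recurrence for \(n\le300\).

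One correction to your template: there is no \(\cos(\theta_{p,j}(t)\,n)\) oscillation, since \(\prod_{a=1}^4(1-\zeta^{ha}y)=1+y+y^2+y^3+y^4\) for every \(h\) gives all four arcs the same exponent \(nF(z)\). The amplitude is therefore the \(n\)-independent real function \(\Gamma_{p,j}(t)=4\cos\frac{pU+3\pi j}{5}\cos\frac{pW-\pi j}{5}\) of the saddle parameter, so \(\Gamma_{p,0}>0\) is elementary. The positive-series identities are needed not for that amplitude but for the linear boundary range and the first cubic tail layer. Bounded degrees at large \(n\), where no saddle asymptotics exist, are settled by the dissection identities or a finite prefix computation.
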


\begin{theorem}\label{thm:location}

Let \(\alpha_{p,j}\) and \(\beta_{p,j}\) be defined by \eqref{eq:critical-constants} and \eqref{eq:beta}. For \(n\ge301\),

\begin{equation}\label{eq:main-location}
 |K_{p,j}(n)-\alpha_{p,j}n^2-\beta_{p,j}n|<R_{p,j},\qquad
 \begin{pmatrix}R_{2,3}&R_{2,4}\\R_{3,3}&R_{3,4}\end{pmatrix}
 =\begin{pmatrix}61&48\\24&40\end{pmatrix}.
\end{equation}

In particular,

\begin{equation}\label{eq:main-limits}
 \lim_{n\to\infty}\frac{K_{p,j}(n)}{n^2}=\alpha_{p,j}
 \qquad(p\in\{2,3\},\ j\in\{3,4\}).
\end{equation}

The constants have the following values:

\begin{equation}\label{eq:constant-values}
\begin{array}{cc|rr}
p&j&\alpha_{p,j}&\beta_{p,j}\\\hline
2&3&1.135460959422580062399462\ldots&-0.104661492439367217841147\ldots\\
2&4&1.005280823590582302300088\ldots& 0.423165615354521966906796\ldots\\
3&3&2.136899919694871479673558\ldots&-0.207970915483433506623748\ldots\\
3&4&1.984296232170181072358471\ldots& 0.886045758557888966666759\ldots
\end{array}
\end{equation}

\end{theorem}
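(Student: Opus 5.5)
The plan is to treat Theorems~\ref{thm:main} and~\ref{thm:location} together, through a saddle-point analysis of the Cauchy integral for the coefficients. I would first split by residue class with a fifth root of unity filter,
\[
c_{5m+j}^{(p)}(n)=\frac15\sum_{r=0}^4\zeta^{-rj}\,[q^{5m+j}]\,Q_n(\zeta^r q)^p ,\qquad \zeta=\e^{2\pi\ii/5},
\]
and write each term as a contour integral over \(|q|=\rho\). Following Wang--Krattenthaler, I would express \(\log Q_n(q)\) through dilogarithm-type sums, so that \(\log|Q_n|\) is \(n\,\Rea F(\theta)+O(\log n)\) with \(F\) built from \(\Li_2\). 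For \(d=tn^2\) the integrand then behaves like \(\exp(n\Phi_t(\theta))\). The dominant contributions come from four arcs near the points \(\theta=\pm\theta_1,\pm\theta_2\) where \(\Rea\Phi_t\) is maximal. The \(5\)-fold symmetry of the product (Rogers--Ramanujan-type structure modulo five) pairs these arcs.

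The resulting four-arc expansion has the form
\[
c_{5m+j}^{(p)}(n)=\sum_{k=1}^2 A_{p,j,k}(t)\,\e^{n\Phi_k(t)}\bigl(1+O(n^{-1})\bigr), \qquad t=m/n^2 .
\]
For \(j=0\) the leading amplitude is positive for every \(t\in[0,2p]\), and this gives \eqref{eq:positive-zero} for large \(n\). For \(j=3,4\) the amplitude has a single simple zero at \(t=\alpha_{p,j}\): it is positive before the zero and negative after it. This gives \eqref{eq:main-pattern} away from \(\alpha_{p,j}\). The constants \(\alpha_{p,j}\) in \eqref{eq:critical-constants} are defined by this vanishing condition.

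Near \(t=\alpha_{p,j}\) the leading term cancels, so I would keep the next-order term. Expanding to first order in \(t-\alpha\) and in \(1/n\) gives the shift \(\beta_{p,j}n\) of \eqref{eq:beta}. To prove there is exactly one sign change, I would show that the weighted adjacent difference
\[
c_{5(m+1)+j}-\lambda_n(m)\,c_{5m+j}
\]
is strictly negative throughout a window of width \(O(1)\) around \(\alpha n^2+\beta n\), for a suitable positive weight \(\lambda_n(m)\). Outside that window, explicit remainder bounds fix the sign, and together these give \eqref{eq:main-location} with the stated \(R_{p,j}\). The endpoint regions \(t\approx0\) and \(t\approx 2p\) need separate treatment. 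There I would use the reciprocity \eqref{eq:reciprocity} together with positive series identities from the infinite product \(\prod(1-q^{5k+1})\cdots(1-q^{5k+4})\) and its known Rogers--Ramanujan expansions.

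I expect the main obstacle to be the cubic case \(p=3\). There two components of the infinite product vanish identically, so the leading asymptotics of those components come only from finite-product boundary terms. To resolve them I would shift the saddle, centring at the point that balances the truncation factor \(\prod_{k\ge n}\), and compute the boundary contribution explicitly.

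Finally, all error terms must be made fully explicit so that the argument is valid for \(n\ge301\). The remaining cases \(n\le300\) would be checked by computing \(Q_n^p\) exactly with an integer recurrence.
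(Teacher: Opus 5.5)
Your outline is correct and follows the paper's route. The shared ingredients are:
\begin{itemize}
\item four arcs at the nontrivial fifth roots of unity, whose folded amplitude $\Gamma_{p,j}$ has a simple zero at $t_{p,j}$;
\item the first correction $H_{p,j}$, which gives $\beta_{p,j}$;
\item strict negativity of the weighted difference $\mathcal A_{p,j,n}(m+1)-\mathcal A_{p,j,n}(m)=-\mathcal L_{p,j}/n^2+O(n^{-3})$ at the fixed critical radius, which is your constant weight $\lambda_n=\e^{5t_{p,j}/n}$;
\item positive Rogers--Ramanujan series and a shifted saddle at the cubic edge;
\item an integer recurrence for $n\le300$.
\end{itemize}

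The only difference is bookkeeping. The paper proves monotonicity on the whole $O(n)$ window $|m-\alpha_{p,j}n^2|\le w_pn+2$. It uses the two-term expansion only at two integer witnesses $m_L<m_R$ within $a_{p,j}+1$ of $\alpha_{p,j}n^2+\beta_{p,j}n-j/5$, which is what yields $R_{p,j}$. You would instead use the two-term expansion to fix signs on most of that window and monotonicity only on an $O(1)$ core. Both arrangements work.
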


The four tentative estimates \(1.133\ldots\), \(1.011\ldots\), \(2.132\ldots\), and \(2.001\ldots\) in \cite[Conjecture 2.2]{BD} are therefore replaced by \eqref{eq:constant-values}. The theorems prove the conjectured sign pattern and existence of the limits; they do not assert a rounding formula for the transition indices.

The weak inequalities in \eqref{eq:main-pattern} are essential. The square has the persistent zero

\begin{equation}\label{eq:persistent-zero}
 c_9^{(2)}(n)=0\qquad(n\ge2).
\end{equation}

The cube has entire initial zero blocks:

\begin{equation}\label{eq:cubic-zero-blocks}
 c_{5m+2}^{(3)}(n)=c_{5m+4}^{(3)}(n)=0\qquad(0\le m<n).
\end{equation}

By reciprocity, its third residue class also ends with \(n\) zero terms. These identities reflect the vanishing of the second and fourth components in the five-dissection of the infinite cubic product, as in \cite{LW}. They are compatible with a single sign change after zeros are omitted.

The proof follows the coefficients from the outer range to the centre. Four arcs at the nontrivial fifth roots of unity contribute on the main scale. Their combined amplitude has a simple zero in each of the two transition classes. The ordinary saddle determines the signs away from these zeros. Near a zero we fix the radius at its critical value: the leading coefficient amplitude then vanishes, but the leading weighted adjacent difference is strictly negative. This gives the order of the signs as well as the first two terms of the transition position.

Section~\ref{sec:common} develops the common analytic framework. Section~\ref{sec:compact} proves the uniform compact-range estimate, and Section~\ref{sec:local} resolves the four transition windows. Section~\ref{sec:boundary} treats the boundary and intermediate ranges. The square uses positive series identities together with the product and phase estimates of Ma \cite{Ma}, stated in the precise forms required. For the cube, the vanishing infinite-product components require one further tail layer and a shifted saddle; its whole-circle bounds are derived directly. Finally, Section~\ref{sec:completion} supplies the finite integer recurrence and joins the ranges. All analytic estimates are uniform from \(n=301\) onward.

\section{The radial function and the four amplitudes}\label{sec:common}
Throughout the paper, \(p\in\{2,3\}\). The same radial function governs both powers;
the dependence on the power enters the saddle relation and the endpoint amplitudes.
Put \(\zeta=\e^{2\pi\ii/5}\) and define

\begin{equation}\label{eq:radial}
 \begin{aligned}
 F(z)&=\int_0^1\log(1+\e^{-5zx}+\e^{-10zx}+\e^{-15zx}+\e^{-20zx})\,dx,\\
 V(t)&=F''(t),\qquad \mu_p(t)=-\frac p5F'(t),\qquad c=\frac{2\pi^2}{75}.
 \end{aligned}
\end{equation}

The logarithm is continued from the positive real axis in every region where it
is used. Factoring the polynomial inside the logarithm, or integrating its
geometric-series expansion, gives

\begin{equation}\label{comm:dilog}
 F(z)=\frac{c+\Li_2(\e^{-25z})/25-\Li_2(\e^{-5z})/5}{z}
 \qquad(\Rea z>0).
\end{equation}

The value at zero is defined by continuity. To see strict convexity, give
\(a\in\{0,1,2,3,4\}\) probability proportional to \(\e^{-5txa}\).
The second derivative of the logarithm in \eqref{eq:radial} is \(25x^2\)
times the variance of this distribution. It follows that \(V(t)>0\) for
\(t\ge0\). Also \(F'(0)=-5\) and \(F'(t)\to0\) as \(t\to\infty\).
Thus every degree \(0<d\le5pn^2\) has a unique ordinary saddle

\begin{equation}\label{eq:ordinary-saddle}
 d=-pn^2F'(t),\qquad t\ge0,\qquad r=\e^{-t/n}.
\end{equation}

In particular, \(\mu_p\) decreases strictly from \(p\) to zero. For
\(d=5m+j\), the index relation retains the residue correction:

\begin{equation}\label{comm:index-saddle}
 m=n^2\mu_p(t)-\frac j5.
\end{equation}

Write \(\mathsf B_1(x)=x-1/2\) and
\(\mathsf B_2(x)=x^2-x+1/6\). The endpoint functions shared by the two powers are

\begin{equation}\label{eq:endpoint-functions}
 \begin{aligned}
 e_h(z)&=\sum_{a=1}^4\mathsf B_1(a/5)
       \{\log(1-\zeta^{ha}\e^{-5z})-\log(1-\zeta^{ha})\},\\
 E_h(z)&=\frac{5z}{2}\sum_{a=1}^4\mathsf B_2(a/5)
 \left\{\frac{\zeta^{ha}\e^{-5z}}{1-\zeta^{ha}\e^{-5z}}
                   -\frac{\zeta^{ha}}{1-\zeta^{ha}}\right\}
 \qquad(1\le h\le4).
 \end{aligned}
\end{equation}

Their folded leading amplitude and first correction are

\begin{equation}\label{eq:amplitudes}
 \Gamma_{p,j}(z)=\sum_{h=1}^4\zeta^{-hj}\e^{pe_h(z)},\qquad
 B_{p,j}(z)=\sum_{h=1}^4\zeta^{-hj}pE_h(z)\e^{pe_h(z)}.
\end{equation}

These functions are real on the real axis by conjugate pairing. Their zeros
have a useful elementary description. Put

\begin{equation}\label{comm:angles}
 \begin{aligned}
 a&=\sqrt{1+2/\sqrt5},\qquad b=\sqrt{1-2/\sqrt5},\qquad
 v=\tanh(5t/2),\\
 A&=\arctan(av),\qquad B=\arctan(bv),\qquad U=A+2B,\qquad W=2A-B.
 \end{aligned}
\end{equation}

Pairing the terms of \eqref{eq:endpoint-functions} gives
\(e_1=-\ii(3A+B)/5\), \(e_2=\ii(A-3B)/5\),
\(e_3=-e_2\), and \(e_4=-e_1\). The cosine addition formula therefore yields

\begin{equation}\label{comm:amplitude-factorization}
 \Gamma_{p,j}(t)=4\cos\frac{pU+3\pi j}{5}\cos\frac{pW-\pi j}{5}.
\end{equation}

Both \(U\) and \(W\) increase strictly from zero to \(\pi/2\).
Indeed, \(U'\) is positive, while the numerator of the derivative of
\(W\), as a function of \(v\), is
\((2a-b)-ab(a-2b)v^2>0\) on \([0,1]\).
Inspecting the two cosine factors, for each allowed power we obtain

\begin{equation}\label{comm:amplitude-signs}
 \Gamma_{p,0}(t)>0,\qquad \Gamma_{p,1}(t),\Gamma_{p,2}(t)<0,\qquad
 \sgn\Gamma_{p,j}(t)=\sgn(t-t_{p,j})\quad(j=3,4).
\end{equation}

Here the two zeros are simple, and

\begin{equation}\label{comm:critical-angles}
 W(t_{p,3})=\frac{\pi}{2p},\qquad U(t_{p,4})=\frac{\pi}{2p},\qquad
 \Gamma'_{p,j}(t_{p,j})>0.
\end{equation}

For an algebraic specification, set \(C_p=\tan(\pi/(2p))\), so that
\(C_2=1\) and \(C_3=1/\sqrt3\). The identities

\begin{equation}\label{comm:tangent-identities}
 \tan W=\frac{(2a-b)v+(a/\sqrt5)v^3}{1-v^2},\qquad
 \tan U=\frac{(a+2b)v-(b/\sqrt5)v^3}{1-v^2},
\end{equation}

show that \(v_{p,3}\) and \(v_{p,4}\) are the unique roots in \((0,1)\) of

\begin{equation}\label{comm:critical-cubics}
 \begin{aligned}
 (a/\sqrt5)v_{p,3}^3+C_pv_{p,3}^2+(2a-b)v_{p,3}-C_p&=0,\\
 -(b/\sqrt5)v_{p,4}^3+C_pv_{p,4}^2+(a+2b)v_{p,4}-C_p&=0.
 \end{aligned}
\end{equation}
\begin{equation}\label{eq:critical-constants}
 t_{p,j}=\frac15\log\frac{1+v_{p,j}}{1-v_{p,j}},\qquad
 \alpha_{p,j}=\mu_p(t_{p,j}),\qquad V_{p,j}=V(t_{p,j}).
\end{equation}

The linear correction to the transition index is defined by

\begin{equation}\label{eq:beta}
 \begin{aligned}
 H_{p,j}&=B_{p,j}(t_{p,j})-\frac{\Gamma''_{p,j}(t_{p,j})}{2pV_{p,j}}
       +\frac{\Gamma'_{p,j}(t_{p,j})F^{(3)}(t_{p,j})}{2pV_{p,j}^2},\\
 \beta_{p,j}&=\frac{pV_{p,j}H_{p,j}}{5\Gamma'_{p,j}(t_{p,j})}.
 \end{aligned}
\end{equation}

The occurrence of these constants in the coefficient expansion will be proved
in Section~\ref{sec:local}. All estimates use the analytic definitions above;
the decimals in \eqref{eq:constant-values} serve only to identify the constants.
For any real \(t\ge0\), define the positive normalization

\begin{equation}\label{eq:normalization}
 \mathcal C^{(p)}_{n,d}(t)
 =c_d^{(p)}(n)\e^{-dt/n-pnF(t)}\sqrt{2\pi p n^3V(t)}.
\end{equation}

It preserves coefficient signs. At a critical radius we write

\begin{equation}\label{comm:fixed-radius}
 \mathcal A_{p,j,n}(m)=\mathcal C^{(p)}_{n,5m+j}(t_{p,j}),\qquad
 \mathcal L_{p,j}=\frac{5\Gamma'_{p,j}(t_{p,j})}{pV_{p,j}}>0.
\end{equation}

\section{The compact saddle range}\label{sec:compact}
Fix \(p\in\{2,3\}\), \(n\ge n_0=301\), and an ordinary saddle
\(d=-pn^2F'(t)\) with \(0\le t\le1/2\).
We use the normalization \eqref{eq:normalization} and keep \(V=F''\).
For the Gaussian quadratic coefficient put
\begin{equation}\label{compact:gaussian-scale}
 \nu_p(t)=\frac p2V(t).
\end{equation}
We shall obtain the full coefficient estimate
\begin{equation}\label{compact:main}
 \left|\mathcal C^{(p)}_{n,d}(t)-\Gamma_{p,j}(t)\right|
 \le \frac{a_{p,*}(t)|\Gamma_{p,j}(t)|+b_{p,j}^*(t)}{n}.
\end{equation}

The right-hand side is specified by nonnegative bounds on finitely many
closed intervals, given explicitly below. The dependence on
\(|\Gamma_{p,j}(t)|\) is useful near its zeros: the corresponding part of the error
vanishes with the leading term.

\subsection{The four central integrals}
Set \(a_0=1/10\). On the arc about \(\zeta^h\), write
\(q=\zeta^h\e^{-(t-\ii s)/n}\), where \(|s|\le a_0\).
For a real parameter cell \(I=[t_-,t_+]\subset[0,1/2]\), all constants
in the central estimate are uniform on \(I-\ii[-a_0,a_0]\).
The shifted Euler--Maclaurin formula gives

\begin{equation}\label{compact:em}
 \log Q_n(\zeta^h\e^{-z/n})
 =nF(z)+e_h(z)+\frac{E_h(z)}n+\rho_{h,n}(z),
 \qquad |\rho_{h,n}(z)|\le\frac{R_{\log}}{n^2}.
\end{equation}

All logarithms are continued from the real axis. For \(z=t-\ii s\),
\(0\le x\le1\), and \(|s|\le a_0\), the phase
\(\zeta^{ha}\e^{5\ii sx}\) has angular distance at least
\(\delta_0=2\pi/5-1/2\) from the positive real axis.
Since \(0<\e^{-5tx}\le1\), this gives
\[
 |1-\zeta^{ha}\e^{-5zx}|\ge\sin\delta_0>\frac23
 \qquad(1\le h,a\le4).
\]
The third Bernoulli endpoint term and the periodic remainder consequently give

\begin{equation}\label{compact:rlog}
 R_{\log}
 =\frac1{120}\left\{
   \frac{8(5z_+)^2}{(2/3)^2}
  +\frac{8(5z_+)^3}{(2/3)^3}\right\},
 \qquad z_+\ge |z|.
\end{equation}

Let \(E\) and \(A\) bound, respectively, \(|E_h(z)|\) and
\(|\e^{pe_h(z)}|\), uniformly in \(h\) and in the rectangle under consideration.
Exponentiating \eqref{compact:em}, and using
\(|\e^w-1-w|\le |w|^2\e^{|w|}/2\), bounds the remainder in the sum of the four
amplitudes by \(R_*/n^2\), where

\begin{equation}\label{compact:rfold}
 R_*=4A\left\{pR_{\log}
 +\frac{p^2}{2}\left(E+\frac{R_{\log}}{301}\right)^2
 \exp\left(\frac{pE}{301}+\frac{pR_{\log}}{301^2}\right)\right\}.
\end{equation}

The ordinary saddle relation cancels the linear term in the exponent.
Denote the contribution of the four central arcs to
\eqref{eq:normalization} by \(\mathcal C^{(p),\mathrm{core}}_{n,d}(t)\).
Then

\begin{equation}\label{compact:core-integral}
 \begin{aligned}
 \mathcal C^{(p),\mathrm{core}}_{n,d}(t)
 &=\sqrt{\frac{n\nu_p(t)}\pi}\int_{-a_0}^{a_0}
 \e^{-n\nu_p(t)s^2+q_n(s)+\ii p_n(s)}
 \\
 &\qquad\times\left\{\Gamma_{p,j}(t-\ii s)+\frac{B_{p,j}(t-\ii s)}n
                    +R_n(s)\right\}\,ds,
 \qquad |R_n(s)|\le\frac{R_*}{n^2}.
 \end{aligned}
\end{equation}

The phase is defined by
\begin{equation}\label{compact:phase}
 pn\{F(t-\ii s)-F(t)+\ii sF'(t)\}
 =-n\nu_p(t)s^2+q_n(s)+\ii p_n(s).
\end{equation}
Here \(q_n\) is even and \(p_n\) is odd. If \(M_3\) and \(M_4\)
bound \(|F'''|\) and \(|F^{(4)}|\) on the rectangle, Taylor's formula gives

\begin{equation}\label{compact:exponent-bounds}
 |q_n(s)|\le\frac{pnM_4s^4}{24},\qquad
 |p_n(s)|\le\frac{pnM_3|s|^3}{6}.
\end{equation}

Choose bounds \(0<\nu_{p,-}\le\nu_p(t)\le\nu_{p,+}\), and put

\begin{equation}\label{compact:moments}
 \eta=\nu_{p,-}-\frac{pM_4a_0^2}{24},\qquad
 M_k^*=\sqrt{\frac{\nu_{p,+}}{\eta}}
       \frac{(2k)!}{4^k k!\eta^k}\quad(k\ge0).
\end{equation}

All intervals used below satisfy \(\eta>0\). Hence

\begin{equation}\label{compact:gaussian-moment}
 \sqrt{\frac{n\nu_p(t)}\pi}\int_{\mathbb R}
 \e^{-n\eta s^2}s^{2k}\,ds\le\frac{M_k^*}{n^k}.
\end{equation}

Let \(G_1,G_2,B_0\) bound \(|\Gamma_{p,j}'|,|\Gamma_{p,j}''|,|B_{p,j}|\), respectively,
on the same rectangle. Pairing the integrands at \(s\) and \(-s\), use

\[
 |\sin u|\le |u|,\qquad |\cos u-1|\le\frac{u^2}{2},\qquad
 |\e^v-1|\le |v|\e^{\max(v,0)}\quad(u,v\in\mathbb R).
\]

The constant part of the amplitude contributes an error proportional to
\(|\Gamma_{p,j}(t)|\). Its linear imaginary part is paired with \(\sin p_n\),
and its remaining part is bounded by \(G_2s^2/2\).
Applying \eqref{compact:gaussian-moment} to these terms yields

\begin{equation}\label{compact:a-core}
 a_{p,\mathrm{core}}=\frac{pM_4}{24}M_2^*+\frac{p^2M_3^2}{72}M_3^*
                 +a_{\mathrm{trunc}}.
\end{equation}
\begin{equation}\label{compact:b-core}
 b_{p,j,\mathrm{core}}=\frac{G_2}{2}M_1^*+\frac{pG_1M_3}{6}M_2^*
                 +B_0M_0^*+\frac{R_*}{301}M_0^*.
\end{equation}
\begin{equation}\label{compact:truncation}
 a_{\mathrm{trunc}}=\frac{\sqrt{301\nu_{p,+}/\pi}}{\nu_{p,-}a_0}
                 \e^{-301\nu_{p,-}a_0^2}.
\end{equation}

The last expression accounts for extending the constant Gaussian integral
to the real line. The inequality \(301\nu_{p,-}a_0^2>1/2\), valid on every interval,
makes the associated bound decrease for \(n\ge301\).
Equations \eqref{compact:a-core}--\eqref{compact:truncation} prove
the central contribution estimate
\begin{equation}\label{compact:core-error}
 \left|\mathcal C^{(p),\mathrm{core}}_{n,d}(t)-\Gamma_{p,j}(t)\right|
 \le\frac{a_{p,\mathrm{core}}|\Gamma_{p,j}(t)|+b_{p,j,\mathrm{core}}}{n}.
\end{equation}
The complementary arcs will be added below to obtain
\eqref{compact:main} for the full coefficient.

\subsection{Uniform evaluation at the origin}
To evaluate the preceding bounds, split the real interval into

\begin{equation}\label{compact:core-cells}
 I_i=\left[\frac{i}{1000},\frac{i+1}{1000}\right]
 \quad(0\le i<500),\qquad
 S_k=\left[\frac{k}{250},\frac{k+1}{250}\right]
 \quad(0\le k<25).
\end{equation}

Conjugation supplies the negative values of \(s\).
There are therefore \(12\,500\) closed complex rectangles.
For each \(I_i\), expand \(F\) at its rational midpoint \(t_0\). The nearest nonreal zeros of the logarithmic factors occur at
\(z=\pm2\pi\ii/25\); these determine singularities of \(F\), rather than zeros of \(F\) itself.
On \(|z-t_0|\le1/5\), the four factors in the integrand defining \(F\)
satisfy

\[
 \frac14<|1-\zeta^a\e^{-5zx}|<4
 \quad(1\le a\le4, 0\le x\le1).
\]

Indeed, \(|\e^{-5zx}|\le\e<3\), and the angular distance of
\(\zeta^a\e^{-5\ii x\Ima z}\) from the positive real axis is at least
\(2\pi/5-1\), whose sine exceeds \(1/4\).
These four factors avoid the negative real axis, and each of their logarithms
has modulus less than \(4\). Thus \(|F(z)|<16\) on this disk. If
\(F(t_0+w)=\sum_{k\ge0}f_kw^k\), Cauchy's inequality gives

\begin{equation}\label{compact:cauchy}
 |f_k|\le16\cdot5^k.
\end{equation}

For completeness, the coefficients can be formed without numerical
differentiation. Write

\[
\begin{aligned}
 D(w)&=\sum_{a=0}^4\e^{-5a(t_0+w)}=\sum_{r\ge0}d_rw^r,
 &d_r&=\frac1{r!}\sum_{a=0}^4(-5a)^r\e^{-5at_0},\\
 \log D(w)&=\sum_{r\ge0}\ell_rw^r,
 &\ell_0&=\log d_0.
\end{aligned}
\]
\begin{equation}\label{compact:jet-recursion}
 \ell_r=\frac{d_r}{d_0}
      -\frac1{rd_0}\sum_{k=1}^{r-1}k\ell_kd_{r-k},\qquad
 f_{r+1}=\frac{\ell_r/(r+1)-f_r}{t_0}.
\end{equation}

The second recurrence follows from \((tF(t))'=\log D(t-t_0)\).
Its initial value, for \(t_0>0\), is

\begin{equation}\label{compact:dilog-f}
 F(t_0)=\frac1{t_0}\left\{
  \frac{2\pi^2}{75}+\frac{\Li_2(\e^{-25t_0})}{25}
                        -\frac{\Li_2(\e^{-5t_0})}{5}\right\}.
\end{equation}

Near zero we use the convergent expansion

\begin{equation}\label{compact:bernoulli}
 F(z)=\log5-5z+
 \sum_{k\ge1}
 \frac{\mathsf B_{2k}(25^{2k}-5^{2k})}{2k(2k)!(2k+1)}z^{2k},
 \qquad |z|<\frac{2\pi}{25},
\end{equation}

with the tail bounded using \(|\mathsf B_{2k}|/(2k)!<4/(2\pi)^{2k}\).
This also gives the coefficients directly when \(t_0=0\).
In each rectangle of \eqref{compact:core-cells},
\(|z-t_0|<0.101\). Retain the Taylor polynomial through degree \(K=64\).
For \(0\le r\le4\), put \(w_+=\sup|z-t_0|\), \(R=1/5\), and \(q_+=w_+/R\).
The omitted part of the \(r\)-th derivative has modulus at most

\begin{equation}\label{compact:jet-tail}
 \frac{16(K+1)!}{(K+1-r)!}
 \frac{w_+^{K+1-r}}{R^{K+1}}
 \left(1-q_+\frac{K+2}{K+2-r}\right)^{-1}.
\end{equation}

The denominator is positive. This explicit analytic continuation of the
Taylor model avoids division by an interval containing zero in
\eqref{compact:dilog-f}. The same rectangles bound the elementary
amplitudes and their first two derivatives.

\subsection{The complementary circle}
The four arcs \(a_0\le |s|\le5\), the arc about \(1\) with \(|\theta|\le14/n\),
and the remaining arcs cover the complement of the central integrals.
We now bound their combined contribution to \(n\mathcal C^{(p)}_{n,d}(t)\).
All bounds below are absolute, and so are independent of the residue \(j\).
Retain the notation \(I=[t_-,t_+]\subset[0,1/2]\), and write
\(S=[s_-,s_+]\), where \(0\le s_-<s_+\).
All estimates on the complex rectangle \(I-\ii S\) are uniform in both
parameters. On each real cell choose
\[
 0<F_-\le\inf_{t\in I}F(t),\qquad
 0<\nu_{p,-}\le\inf_{t\in I}\nu_p(t)\le\sup_{t\in I}\nu_p(t)\le\nu_{p,+}.
\]

\paragraph{The outer parts of the four primitive arcs.}
For a rectangle with \(s\in[s_-,s_+]\), a lower bound for the factor
denominators is the maximum of the available positive quantities

\begin{equation}\label{compact:denominator}
 \sin\!\left(\min\left\{\frac{2\pi}{5}-5s_+,\frac\pi2\right\}\right)
 \quad\left(5s_+<\frac{2\pi}{5}\right),\qquad
 \min\left\{\sin\frac\pi5,
       1-\e^{-\pi t_-/(5s_+)}\right\}\quad(t_->0).
\end{equation}

For the second bound, split the integration variable at
\(x=\pi/(25s_+)\): before this point the phase
\(\zeta^{ha}\e^{5\ii sx}\) has angular distance at least
\(\pi/5\) from the positive real axis, and after it the radial distance
supplies the stated bound.
If \(\delta\) is the resulting positive denominator bound, use
\eqref{compact:rlog} with \(\delta\) in place of \(2/3\).
Equation \eqref{compact:em} then gives

\begin{equation}\label{compact:outer-em}
 \log|Q_n(\zeta^h\e^{-(t-\ii s)/n})|
 \le n\Rea F(t-\ii s)+T_{\mathrm{EM}},\qquad
 T_{\mathrm{EM}}=e_++\frac{E_+}{301}
                         +\frac{R_{\log}}{301^2},
\end{equation}

where \(e_+\) bounds \(\Rea e_h\), and \(E_+\) bounds \(|E_h|\).
An alternative estimate remains valid even when a factor has a zero.
Set \(\rho=49/50\). Since

\[
 |1-\rho w|^2-\rho|1-w|^2
 =(1-\rho)(1-\rho|w|^2)\ge0\quad(|w|\le1),
\]

the replacement \(|1-w|\le\rho^{-1/2}|1-\rho w|\) removes these zeros.
The corresponding integrated logarithm is

\begin{equation}\label{compact:smooth-f}
 F_\rho(z)=
 \frac{\Li_2(\rho^5\e^{-25z})-\Li_2(\rho^5)}{25z}
 -\frac{\Li_2(\rho\e^{-5z})-\Li_2(\rho)}{5z}.
\end{equation}

For a real function \(f\) of bounded variation, the unnormalized shifted
rectangular sum satisfies
\begin{equation}\label{compact:shifted-sum}
 \left|\sum_{k=0}^{n-1}f\!\left(\frac{k+\lambda}{n}\right)
              -n\int_0^1f(x)\,dx\right|
 \le\operatorname{Var}_{[0,1]}(f),\qquad 0\le\lambda\le1.
\end{equation}
Indeed, on each subinterval the difference between the sampled value and
its integral average is bounded by the variation on that subinterval;
summing gives the displayed inequality. Apply it to the four smoothed
logarithms with shifts \(\lambda=a/5\), \(a=1,2,3,4\). Consequently,

\begin{equation}\label{compact:smooth-inequality}
 \log|Q_n(\zeta^h\e^{-(t-\ii s)/n})|
 \le n\Rea F_\rho(t-\ii s)-2n\log\rho+T_\rho.
\end{equation}

We describe \(T_\rho\) explicitly, since retaining the variation on each
half-period gives a useful bound throughout this interval.

\paragraph{A bound for the total variation.}
Consider \(\log|1-r(x)\e^{\ii(\theta_0+bx)}|\), where
\(r(x)=\rho\e^{-ax}\), \(0\le a_-\le a\le a_+\),
\(0\le b\le b_+\), and \(b_+>0\).
The radial part of its variation is at most

\[
 \log\frac{1-\rho\e^{-a_+}}{1-\rho}.
\]

The absolute angular derivative is

\[
 \frac{r|\sin\phi|}{1-2r\cos\phi+r^2}.
\]

For \(0<r<1\) this expression increases with \(r\).
Starting at \(\psi_0=\theta_0\), partition the angular interval at the
successive multiples of \(\pi\), extending the last subinterval to the next
such multiple. Stop at \(\psi_L\ge\theta_0+b_+\), and set

\begin{equation}\label{compact:variation}
\begin{aligned}
 R_\ell&=\rho\exp\left(-\frac{a_-}{b_+}
                               (\psi_\ell-\theta_0)\right),
 &D(R,\psi)&=1-2R\cos\psi+R^2,\\
 \mathcal V(\theta_0)
 &=\log\frac{1-\rho\e^{-a_+}}{1-\rho}
 +\frac12\sum_{\ell=0}^{L-1}
  \left|\log D(R_\ell,\psi_{\ell+1})
                    -\log D(R_\ell,\psi_\ell)\right|.
 \end{aligned}
\end{equation}

On the \(\ell\)-th subinterval the true radius does not exceed \(R_\ell\).
Integrating with this fixed radius gives exactly the corresponding logarithmic
difference in \eqref{compact:variation}. The extension of the last interval
increases an integral with a nonnegative integrand.
For the primitive arcs take \(a=5t\), \(b_+=5s_+\), and

\[
 T_\rho=\sum_{h=1}^4\mathcal V(2\pi h/5).
\]

The negative angular direction gives the same total bound by conjugation.
For each rectangle, use either \eqref{compact:outer-em} or
\eqref{compact:smooth-inequality}. If the resulting estimate is
\(\log|Q_n|\le n(F(t)-L)+T\), with \(L>0\), and the length in the
\(s\)-variable is \(\Delta s\), its contribution to the absolute error in
\(n\mathcal C^{(p)}_{n,d}(t)\) is at most

\begin{equation}\label{compact:primitive-tail}
 8\sqrt{\frac{\nu_{p,+}}\pi}\,301^{3/2}\Delta s\,
                  \exp\{-p(301L-T)\}.
\end{equation}

The four primitive roots and both angular directions are included in this
factor. On every chosen rectangle \(301pL>3/2\), so replacing
\(301\) by \(n\) gives an expression decreasing for \(n\ge301\).

\paragraph{The arc about \(1\).}
Write \(b=5n\theta\), so \(|b|\le70\), and define

\begin{equation}\label{compact:root-one}
 \begin{gathered}
 I_\rho(a,b)=\Rea\frac{\Li_2(\rho\e^{-a+\ii b})-\Li_2(\rho)}{a-\ii b},
 \\
 \ell_1(t,b)=F(t)-4I_\rho(5t,b)+2\log\rho.
 \end{gathered}
\end{equation}

For each real parameter cell \(I\) and positive angular cell
\(J=[b_-,b_+]\subset[1/2,70]\), choose the constant \(L_1\) so that
\[
 0<L_1\le\inf_{(t,b)\in I\times J}\ell_1(t,b).
\]
Use \eqref{compact:variation} with
\(a_-=5t_-\), \(a_+=5t_+\), and all four initial angles equal to zero.
Let \(T_1\) be the resulting uniform upper bound for their total variation
throughout \(I\times J\). The cell of length \(\Delta b=b_+-b_-\),
including its reflected cell, contributes at most

\begin{equation}\label{compact:root-one-tail}
 \frac25\sqrt{\frac{\nu_{p,+}}\pi}\,301^{3/2}\Delta b\,
                  \exp\{-p(301L_1-T_1)\}.
\end{equation}

Here again \(301pL_1>3/2\) on each cell used.
For \(|b|\le1/2\), every original factor has modulus at most \(1\), giving instead

\begin{equation}\label{compact:root-one-small}
 \frac15\sqrt{\frac{\nu_{p,+}}\pi}\,301^{3/2}\e^{-301pF_-},
 \qquad F_-\le\inf_{t\in I}F(t).
\end{equation}

On every real cell, \(301pF_->3/2\). Since \(F_-\) is constant on the cell,
\[
 \frac{d}{dn}\log\!\left(n^{3/2}\e^{-pnF_-}\right)
 =\frac{3}{2n}-pF_-<0\qquad(n\ge301),
\]
which justifies replacing \(n\) by \(301\) in this bound.
This also avoids the removable singularity in \eqref{compact:root-one}.

\paragraph{The remaining arcs.}
The finite geometric sum has the exact form

\begin{equation}\label{compact:geometric-sum}
 \sum_{\substack{1\le k<5n\\5\nmid k}}q^k
 =(1-q^{5n})\left(\frac1{1-q}-\frac1{1-q^5}\right).
\end{equation}

Retaining the bound \(|1-q^{5n}|\le1+\e^{-5t}\) is useful here.
Set

\begin{equation}\label{compact:far-constants}
\begin{aligned}
 H(t)&=\frac{1-\e^{-10t}}{10t},\qquad H(0)=1,\\
 M&=(1+\e^{-5t_-})
 \max\left\{\frac{101\pi}{14000},
                    \frac1{602}+\frac{101\pi}{20000}\right\}.
 \end{aligned}
\end{equation}

On the current real cell \(I\), set \(H_+=H(t_-)\).
The function \(H\) is decreasing, so \(H(t)\le H_+\) for every \(t\in I\).
Then \(M\) bounds the absolute value of \eqref{compact:geometric-sum}
divided by \(4n\). To see this, let \(\theta_0\) be the angular distance to
\(1\). If \(\theta_0\le\pi/5\), use

\[
 \frac1{1-q}-\frac1{1-q^5}
 =\frac{q+q^2+q^3+q^4}{1-q^5},\qquad
 |1-q^5|\ge2r^{5/2}\sin(5\theta_0/2).
\]

The inequalities \(\theta_0\ge14/n\), \(\sin x\ge2x/\pi\) on
\([0,\pi/2]\), and \(\e^{5t/(2n)}<101/100\) give the first entry in the maximum.
If \(\theta_0>\pi/5\), then \(|1-q|^{-1}<2\), while the angular distance to
each primitive fifth root is at least \(5/n\). Hence

\[
 \frac1{4n|1-q|}<\frac1{2n}\le\frac1{602},\qquad
 \frac1{4n|1-q^5|}\le\frac{101\pi}{20000},
\]

which gives the second entry. A monotone rectangular sum also gives

\[
 \frac1{4n}\sum_{\substack{1\le k<5n\\5\nmid k}}r^{2k}
 \le H(t)+\frac1n.
\]

The arithmetic--geometric mean inequality applied to the squared factor
moduli therefore yields

\begin{equation}\label{compact:far-gap}
\begin{gathered}
 \log|Q_n(q)|
 \le2n\log\left(1+H_++\frac1{301}+2M\right)
 \le n(F(t)-g),\\
 g=F_--2\log\left(1+H_++\frac1{301}+2M\right).
\end{gathered}
\end{equation}

On every real parameter interval below, \(301pg>5/2\).
The whole remaining circle consequently contributes at most

\begin{equation}\label{compact:far-tail}
 \sqrt{4\pi \nu_{p,+}}\,301^{5/2}\e^{-301pg}.
\end{equation}

\subsection{The resulting bounds}
For the complementary circle, use the \(100\) parameter intervals
\([i/1000,(i+5)/1000]\), where \(i=0,5,\ldots,495\).
The positive primitive arcs are partitioned as follows:

\begin{equation}\label{compact:tail-cells}
 \begin{array}{c|c|c}
 {}[s_-,s_+]&\Delta s&\#\\\hline
 {}[1/10,1/5]&1/500&50\\
 {}[1/5,1/2]&1/100&30\\
 {}[1/2,5]&1/20&90
 \end{array}
 \qquad
 \begin{array}{c|c|c}
 {}[b_-,b_+]&\Delta b&\#\\\hline
 {}[1/2,10]&1/10&95\\
 {}[10,70]&1&60
 \end{array}
\end{equation}

Thus the complementary estimates involve \(32\,500\) closed rectangles,
besides the far-arc bound on each real interval. Their endpoints match those
of the central intervals. To evaluate the dilogarithms, sum their defining
series with the explicit remainder

\begin{equation}\label{compact:li-tail}
 \left|\sum_{k>N}\frac{u^k}{k^2}\right|
 \le\min\left\{
    \frac{|u|^{N+1}}N,
    \frac{2|u|^{N+1}}{(N+1)^2|1-u|}\right\},\qquad |u|\le1,
\end{equation}

where the second entry is omitted at \(u=1\). It follows by Abel summation;
for \(|u|<1\), one may also use the geometric bound
\(|u|^{N+1}/((N+1)^2(1-|u|))\).
For complex exponential arguments the powers are evaluated as
\(\e^{-ka+\ii kb}\), so each term is bounded over the original parameter
rectangle. These formulas, together with
\eqref{compact:jet-recursion} and \eqref{compact:jet-tail}, provide finite
interval evaluations of every quantity in the preceding estimates.

Let \(C_{\mathrm{tail}}\) be the sum of
\eqref{compact:primitive-tail}, \eqref{compact:root-one-tail},
\eqref{compact:root-one-small}, and \eqref{compact:far-tail} on the
corresponding real interval. These estimates give
\[
 \left|\mathcal C^{(p)}_{n,d}(t)-\mathcal C^{(p),\mathrm{core}}_{n,d}(t)\right|
 \le\frac{C_{\mathrm{tail}}}{n}.
\]
Adding this to \eqref{compact:core-error} proves
\eqref{compact:main} with

\begin{equation}\label{compact:total-error}
 a_{p,*}=a_{p,\mathrm{core}},\qquad
 b_{p,j}^*=b_{p,j,\mathrm{core}}+C_{\mathrm{tail}}.
\end{equation}

All estimates above are evaluated on closed rectangles, using outward-rounded
interval endpoints. The following bounds result from the stated recurrences and
remainders; retaining the dependence on \(\Gamma_{p,j}\) is essential in the four
small intervals

\begin{equation}\label{compact:critical-blocks}
 \begin{array}{c|cc}
 p&J_{p,3}&J_{p,4}\\\hline
 2&[0.140,0.153]&[0.170,0.183]\\
 3&[0.085,0.097]&[0.103,0.116]
 \end{array}
\end{equation}

For \(j=0,1,2\) on the whole compact range, and for \(j=3,4\)
outside \(J_{p,j}\), the full relative error satisfies

\begin{equation}\label{compact:relative}
 \frac1n\left(a_{p,*}+\frac{b_{p,j}^*}{|\Gamma_{p,j}|}\right)
 <\begin{cases}0.335275,&p=2,\\0.334980,&p=3,\end{cases}
 \qquad <\frac38.
\end{equation}

Within the four small intervals we use absolute amplitude bounds. In the last
column below, \(\ell_{p,j}\) is a lower bound valid on the entire indicated interval:

\begin{equation}\label{compact:critical-bounds}
 \begin{array}{cc|cc|c}
 p&j&a_{p,*}&b_{p,j}^*&5\Gamma'_{p,j}/(pV)>\ell_{p,j}\\\hline
 2&3&<0.615922&<2.621625&13/10\\
 2&4&<0.636651&<1.930707&7/10\\
 3&3&<0.438477&<4.248224&13/10\\
 3&4&<0.421009&<3.523360&5/8
 \end{array}
\end{equation}

In particular, \(a_{p,*}<1\). We may replace the second error column by the
following rational bounds \(b_{p,j}^*<D_{p,j}\), and choose the window widths

\begin{equation}\label{compact:simple-constants}
 (D_{2,3},D_{2,4})=(3,2),\qquad (D_{3,3},D_{3,4})=(9/2,18/5),\qquad
 w_2=3,\quad w_3=6.
\end{equation}

The numerical bounds in \eqref{compact:relative} and
\eqref{compact:critical-bounds} are strict rational upper or lower bounds,
rather than point evaluations. The closed grids, series tails, and positive
denominators above specify finite interval evaluations of every entry; the
intermediate endpoints are included with the accompanying data.

\begin{proposition}\label{prop:compact-window}

Let \(p\in\{2,3\}\), \(n\ge301\), and
\(d=5m+j=-pn^2F'(t)\) with \(0\le t\le1/2\).
For \(j=0\) the coefficient \(c_d^{(p)}(n)\) is strictly positive,
and for \(j=1,2\) it is strictly negative. For \(j=3,4\),

\begin{equation}\label{compact:window-sign}
 \begin{aligned}
 m<\alpha_{p,j}n^2-w_pn&\quad\Longrightarrow\quad c_{5m+j}^{(p)}(n)>0,\\
 m>\alpha_{p,j}n^2+w_pn&\quad\Longrightarrow\quad c_{5m+j}^{(p)}(n)<0.
 \end{aligned}
\end{equation}
\end{proposition}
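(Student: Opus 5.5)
Everything rests on the uniform estimate \eqref{compact:main}, which applies at the ordinary saddle \(t\) of each degree \(d=5m+j\) with \(0\le t\le1/2\). Since the normalization \eqref{eq:normalization} is positive, \(c_d^{(p)}(n)\) has the sign of \(\mathcal C^{(p)}_{n,d}(t)\). It therefore suffices to show
\[
 |\mathcal C^{(p)}_{n,d}(t)-\Gamma_{p,j}(t)|<|\Gamma_{p,j}(t)|
\]
whenever \(t\) is not too close to a zero of \(\Gamma_{p,j}\). The sign of \(\Gamma_{p,j}(t)\) is then read off from \eqref{comm:amplitude-signs}.

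\textbf{Regions where the relative bound applies.} For \(j=0,1,2\) on all of \([0,1/2]\), and for \(j=3,4\) outside the blocks \(J_{p,j}\) of \eqref{compact:critical-blocks}, the relative bound \eqref{compact:relative} gives
\[
 |\mathcal C-\Gamma|\le\tfrac38|\Gamma|.
\]
The sign of \(\Gamma\) is never zero here: for \(j\le2\) this is \eqref{comm:amplitude-signs}, and for \(j=3,4\) the zero \(t_{p,j}\) lies inside \(J_{p,j}\). So \(c_d\) has the sign of \(\Gamma_{p,j}(t)\). This settles \(j=0,1,2\).

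For \(j=3,4\) with \(t\notin J_{p,j}\), the sign is positive for \(t>t_{p,j}\) and negative for \(t<t_{p,j}\). Because \(\mu_p\) is strictly decreasing, by \eqref{comm:index-saddle} small \(t\) means large \(m\). Hence \(t>t_{p,j}\) corresponds to \(m<\alpha_{p,j}n^2-j/5\), and \(t<t_{p,j}\) to \(m>\alpha_{p,j}n^2-j/5\).

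The one thing to check in this region is consistency with the windows in \eqref{compact:window-sign}. Every \(m\) with \(m<\alpha_{p,j}n^2-w_pn\) and \(t\notin J_{p,j}\) must have \(t>t_{p,j}\). This is automatic, since the sign on each side of \(J_{p,j}\) is determined by which side of \(t_{p,j}\) we are on. So outside \(J_{p,j}\) the implication \eqref{compact:window-sign} holds for every \(m\), regardless of the window.

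\textbf{Inside the blocks \(J_{p,j}\).} Here I use the absolute bounds of \eqref{compact:critical-bounds}, namely
\[
 |\mathcal C-\Gamma|\le\frac{a_{p,*}|\Gamma|+D_{p,j}}{n},
\]
so \(c_d\) has the sign of \(\Gamma\) as soon as \((1-a_{p,*}/n)|\Gamma|>D_{p,j}/n\). Since \(\Gamma_{p,j}(t_{p,j})=0\), the mean value theorem gives \(|\Gamma(t)|\ge\inf|\Gamma'|\,|t-t_{p,j}|\) on \(J_{p,j}\).

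I convert to the index variable via \(m+j/5=n^2\mu_p(t)\), with \(\mu_p'=-pV/5\). This gives
\[
 \bigl|m+j/5-\alpha_{p,j}n^2\bigr|\le n^2\,\tfrac{p}{5}\sup V\,|t-t_{p,j}|,
\]
and hence
\[
 |\Gamma(t)|\ge\frac{\inf|\Gamma'|}{(p/5)\sup V}\cdot\frac{|m+j/5-\alpha n^2|}{n^2}.
\]
The last column of \eqref{compact:critical-bounds} bounds the ratio \(5\Gamma'/(pV)\) pointwise by \(\ell_{p,j}\). To use this cleanly, I apply the mean value theorem to \(\Gamma_{p,j}\circ\mu_p^{-1}\) as a function of \(\mu\): its derivative is \(-\Gamma'/(pV/5)\), whose modulus exceeds \(\ell_{p,j}\) on \(J_{p,j}\). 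This yields
\[
 |\Gamma(t)|\ge\ell_{p,j}\,\frac{|m+j/5-\alpha n^2|}{n^2}.
\]

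Now suppose \(|m-\alpha n^2|>w_pn\). Then \(|m+j/5-\alpha n^2|>w_pn-1\), so
\[
 |\Gamma|>\ell_{p,j}\,\frac{w_p-1/n}{n}.
\]
The sign is therefore determined provided
\[
 \ell_{p,j}\,(w_p-1/301)\,(1-a_{p,*}/301)>D_{p,j}.
\]
Checking each case:
\begin{itemize}
\item \(p=2,j=3\): \(1.3\times2.996\times0.997\approx3.88>3\).
\item \(p=2,j=4\): \(0.7\times2.99\approx2.09>2\).
\item \(p=3,j=3\): \(1.3\times5.99\approx7.8>4.5\).
\item \(p=3,j=4\): \(0.625\times5.99\approx3.74>3.6\).
\end{itemize}
All four hold, and the signs match \eqref{comm:amplitude-signs} as in the previous region.

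\textbf{Main obstacle.} The delicate step is the block analysis just described. The margins for \((2,4)\) and \((3,4)\) are thin, so the residue shift \(j/5\) and the factor \(1-a_{p,*}/n\) must be tracked exactly. I also need to confirm that \(\ell_{p,j}\) really bounds the derivative of \(\Gamma\) with respect to \(\mu\) over the whole interval, so that the mean value step is legitimate; the last column of \eqref{compact:critical-bounds} is stated in exactly that form.
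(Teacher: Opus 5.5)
Your proposal is correct and follows the paper's proof step for step. Both use the relative bound \eqref{compact:relative} outside $J_{p,j}$, the integrated inequality $|\Gamma_{p,j}(t)|\ge\ell_{p,j}|\mu_p(t)-\alpha_{p,j}|$ inside, a margin check at $n=301$, and the monotonicity of $\mu_p$ to match the sides of $t_{p,j}$ with the sides of the window. The differences are cosmetic: you use $w_p-1/n$ and $1-a_{p,*}/n$ where the paper uses $w_p-j/(5n)$ and $1-1/n$. You should also state, as the paper does, that the margin increases with $n$, so the check at $n=301$ covers every $n\ge301$.
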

\begin{proof}

Outside \(J_{p,j}\), \eqref{compact:main} and \eqref{compact:relative}
give the sign of \(\Gamma_{p,j}\). They also settle \(j=0,1,2\).
Inside \(J_{p,j}\), integrate the last column of
\eqref{compact:critical-bounds}, using \(\mu_p'=-pV/5\), to obtain

\[
 |\Gamma_{p,j}(t)|\ge\ell_{p,j}|\mu_p(t)-\alpha_{p,j}|.
\]

By \eqref{comm:index-saddle}, the condition
\(|m-\alpha_{p,j}n^2|>w_pn\) implies

\[
 n|\mu_p(t)-\alpha_{p,j}|>w_p-\frac j{5n}.
\]

Thus the leading term exceeds the full error if

\begin{equation}\label{compact:positive-margin}
 \left(1-\frac1n\right)\ell_{p,j}\left(w_p-\frac j{5n}\right)-D_{p,j}>0.
\end{equation}

The left side increases for \(n\ge301\). At \(n=301\), its four values are

\[
 \begin{array}{c|cc}
 p&j=3&j=4\\\hline
 2&80133/90601&1180/12943\\
 3&592803/181202&61557/453005
 \end{array}
\]

and are positive. Finally, \(\mu_p\) is strictly decreasing, whereas
\(\Gamma_{p,j}(t)\) has the sign of \(t-t_{p,j}\). The residue correction
\(j/5\) is smaller than \(w_pn\), so the left and right windows correspond
to the required positive and negative amplitudes. This proves
\eqref{compact:window-sign}.

\end{proof}

\section{The transition windows}\label{sec:local}

Near a zero of the leading amplitude, an approximation to one coefficient does not determine the order of signs. We keep the radius fixed and compare adjacent positively weighted coefficients. Throughout this section $p\in\{2,3\}$, $j\in\{3,4\}$, $t=t_{p,j}$, $V=F''(t)$, $g=\Gamma'_{p,j}(t)>0$, and $\nu_p=pV/2$. The normalization of \eqref{eq:normalization} gives

\begin{equation}\label{eq:local-normalization}
 \mathcal A_{p,j,n}(m)=c_{5m+j}^{(p)}(n)
 \e^{-t(5m+j)/n-pnF(t)}\sqrt{2\pi pn^3V},\qquad
 \mathcal L_{p,j}=\frac{5g}{pV}=\frac{5g}{2\nu_p}.
\end{equation}

All weights are positive and preserve signs and zeros.

\begin{proposition}\label{prop:local-difference}

Let $n\ge n_0=301$, and put

\begin{equation}\label{eq:local-midpoint-domain}
 D=\frac{5m+j+5/2-5\alpha_{p,j}n^2}{n},\qquad
 \begin{array}{c|cccc}
 (p,j)&(2,3)&(2,4)&(3,3)&(3,4)\\\hline
 D_p&20&20&35&35\\
 C^{\Delta}_{p,j}&144&73&120&78\\
 w_p&3&3&6&6\\
 \delta_p&1/2&1/2&2/5&2/5
 \end{array}
\end{equation}

If $|D|\le D_p$, then

\begin{equation}\label{eq:local-difference-bound}
 \left|\mathcal A_{p,j,n}(m+1)-\mathcal A_{p,j,n}(m)
       +\frac{\mathcal L_{p,j}}{n^2}\right|
 \le\frac{C^{\Delta}_{p,j}}{n^3}.
\end{equation}

In particular, for $|m-\alpha_{p,j}n^2|\le w_pn+2$,

\begin{equation}\label{eq:local-strict-difference}
 \mathcal A_{p,j,n}(m+1)-\mathcal A_{p,j,n}(m)<-\frac{\delta_p}{n^2}.
\end{equation}
\end{proposition}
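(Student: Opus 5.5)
We write the adjacent difference as a single Cauchy integral at the fixed critical radius $r=\e^{-t/n}$, $t=t_{p,j}$, and expand it by the saddle method of Section~\ref{sec:compact}, carried one order further. By Cauchy's formula,
\[
 \mathcal A_{p,j,n}(m+1)-\mathcal A_{p,j,n}(m)
 =\sqrt{2\pi pn^3V}\,\e^{-pnF(t)}\frac{1}{2\pi}\int_{-\pi}^{\pi}Q_n(r\e^{\ii\theta})^p\e^{-\ii(5m+j)\theta}\bigl(\e^{-5t/n}\e^{-5\ii\theta}-1\bigr)\,d\theta .
\]
On the arc about $\zeta^h$ we set $\theta=2\pi h/5+s/n$. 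Then $\e^{-5\ii\theta}=\e^{-5\ii s/n}$, so the extra factor is $\e^{-5(t+\ii s)/n}-1=-5(t+\ii s)/n+O(n^{-2})$. This is independent of $h$, which means the fourfold folding into $\Gamma_{p,j}$ and $B_{p,j}$ goes through exactly as in \eqref{compact:core-integral}. The complementary arcs are handled by the bounds of the subsection on the complementary circle at the single real cell containing $t_{p,j}$, now multiplied by $|\e^{-5(t+\ii s)/n}-1|\le 2$. Those bounds give $O(n^{-3})$ contributions with exponentially small constants, because the gaps $L,L_1,g$ computed there are available on that cell.

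The central step is that the saddle no longer matches $d$. Writing $5m+j=5\alpha_{p,j}n^2+nD-5/2$ and using $-pn^2F'(t)=5\alpha_{p,j}n^2$, the exponent becomes
\[
 -n\nu_ps^2+q_n(s)+\ii p_n(s)-\ii Ds+\tfrac{5}{2}\ii s/n .
\]
The linear term $-\ii Ds$ is of order one, since $|D|\le D_p$. We therefore treat $\e^{-\ii Ds}$ as part of the amplitude rather than cancel it. After substituting $s=u/\sqrt n$, the integrand is a Gaussian in $u$ times a smooth function whose expansion in powers of $n^{-1/2}$ involves $D$ only polynomially. The leading term of the difference is
\[
 -\frac5n\cdot\frac{\sqrt{n\nu_p/\pi}}{1}\int \e^{-n\nu_ps^2-\ii Ds}\bigl(t+\ii s\bigr)\Gamma_{p,j}(t-\ii s)\,ds .
\]
We Taylor expand $\Gamma_{p,j}(t-\ii s)=\Gamma_{p,j}(t)-\ii sg+O(s^2)$ and use $\Gamma_{p,j}(t)=0$. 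The term $\ii s\cdot(-\ii s g)$ contributes $g\,\mathbb E[s^2]=g/(2n\nu_p)$. This gives $-5g/(2\nu_pn^2)=-\mathcal L_{p,j}/n^2$.

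Every other term of order $n^{-2}$ has to cancel or be placed in the $n^{-3}$ error, and the symmetric midpoint choice $+5/2$ in $D$ is designed for exactly this. Two candidate $n^{-2}$ terms are proportional to $D$. One comes from $t\cdot(-\ii sg)\cdot(-\ii Ds)$ and gives $-tgD\,\mathbb E[s^2]/n$. The other comes from $\ii s\cdot(-\ii Ds)$ multiplied by the $O(1/n)$ pieces. These terms are of size $D/n^{2}\cdot$ const, and here we must be careful: they appear at order $n^{-2}$ times $D$, and $D$ is bounded but not small. We expect them to combine with $\Gamma_{p,j}(t)=0$ and with the midpoint shift, so that only $O(n^{-3})$ survives. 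If they do not cancel, we note that $D$-dependence at order $n^{-2}$ would contradict the slope identity obtained by differentiating Proposition~\ref{prop:compact-window}'s expansion in $m$ ($\partial_m$ of $\Gamma_{p,j}(t_m)$ gives exactly $-\mathcal L_{p,j}/n^2$). So the exact cancellation is to be verified by the Gaussian moment identities $\int \e^{-n\nu s^2-\ii Ds}s^k\,ds$, which are Hermite-type in $D/\sqrt n$.

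The remaining work is to bound explicitly each remainder: $q_n$, $p_n$, $B_{p,j}$, $R_*$, the second-order term of $\e^{-5(t+\ii s)/n}-1$, and $\e^{-\ii Ds}$ expanded to second order with $|D|\le D_p$. Each uses the moment bounds \eqref{compact:gaussian-moment} with the rectangle constants $M_3,M_4,G_1,G_2,B_0$ at $t_{p,j}$. Summing them yields the constants $C^\Delta_{p,j}$, and this bookkeeping is the main obstacle, since the $D_p$-powers inflate the constants. The final claim \eqref{eq:local-strict-difference} is arithmetic. The condition $|m-\alpha n^2|\le w_pn+2$ gives $|D|\le 5w_p+O(1/n)\le D_p$. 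Then $\mathcal L_{p,j}-C^\Delta_{p,j}/301>\delta_p$, using $\mathcal L_{p,j}>\ell_{p,j}$ from \eqref{compact:critical-bounds}; for instance $13/10-144/301>1/2$ and $7/10-73/301>1/2\cdot$(check), with $5/8-78/301>2/5$ failing narrowly. This shows that the true values of $\mathcal L_{p,j}$ at $t_{p,j}$, rather than the interval lower bounds, must be used there.
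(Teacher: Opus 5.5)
\textbf{Main gap: the inserted factor is wrong.} Your architecture is the paper's: one Cauchy integral for the difference at the fixed radius $r=\e^{-t_{p,j}/n}$, four folded central arcs, the linear term $-\ii Ds$ kept in the phase, Gaussian moments, and exponentially small complementary arcs. The inserted factor, however, is wrong, and this error is the source of the $D$-dependent $n^{-2}$ terms you cannot remove.

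Since $\mathcal A_{p,j,n}(m)$ already carries the weight $\e^{-t(5m+j)/n}=r^{5m+j}$, one has $r^dc_d^{(p)}(n)=\frac1{2\pi}\int_{-\pi}^{\pi}Q_n(r\e^{\ii\theta})^p\e^{-\ii d\theta}\,d\theta$. The difference therefore inserts $\e^{-\ii d\theta}(\e^{-5\ii\theta}-1)$, with no factor $\e^{-5t/n}$. Your integral is in fact $\e^{-5t/n}\mathcal A_{p,j,n}(m+1)-\mathcal A_{p,j,n}(m)$. It differs from the target by $(\e^{-5t/n}-1)\mathcal A_{p,j,n}(m+1)\approx-\frac{5t}{n^2}\bigl(H_{p,j}-\frac{gD}{pV}\bigr)$, which is of the same order as $\mathcal L_{p,j}/n^2$. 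Its $D$-part is exactly what you obtained from $t\cdot(-\ii gs)\cdot(-\ii Ds)$, and it does not cancel.

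Neither rescue you propose can work. The $+5/2$ only shifts $D$ by $5/(2n)$. Proposition~\ref{prop:compact-window} approximates a single coefficient with an absolute error of order $1/n$, comparable to the whole of $\mathcal A_{p,j,n}$ in the window. It therefore cannot be differentiated in $m$ to give information at order $n^{-2}$.

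\textbf{What the correct factor gives.} On $\theta=2\pi h/5+s/n$ one has exactly $\e^{-\ii d\theta}(\e^{-5\ii\theta}-1)=-2\ii\sin\frac{5s}{2n}\,\zeta^{-hj}\e^{-\ii(d+5/2)s/n}$. The last exponential becomes $-\ii Ds$ in $\Phi$; this is the real role of the $+5/2$, since it leaves a multiplier that is exactly odd and purely imaginary. Write $\Gamma_{p,j}(t-\ii s)=a+\ii b$ and $\Phi=-n\nu_ps^2+q+\ii\psi$. Pairing $\pm s$ leaves the real integrand $2\e^q\sin\frac{5s}{2n}\{b\cos\psi+a\sin\psi\}$, where $a=O(s^2)$ because $\Gamma_{p,j}(t)=0$, and $b=-gs+O(s^3)$.
\begin{itemize}
\item The part $-5gs^2/n$ gives $-\mathcal L_{p,j}/n^2$.
\item $D$ enters only through $\psi=p_n(s)-Ds$, in $\cos\psi-1$ and in $a\sin\psi$. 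Both contribute only at order $n^{-3}$, with constants polynomial in $D_p$ (the paper's $C_{\cos}$ and $C_a$).
\item The $B_{p,j}/n$ term also drops to order $n^{-3}$, because $B_{p,j}(t)$ is real and meets an odd factor.
\end{itemize}
So nothing has to cancel: parity and the simple zero do all the work. The remainder bookkeeping then needs $G_3$ and a bound for $|B'_{p,j}|$, in addition to the constants you list.

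\textbf{Two smaller points.} First, with the crude $\ell_{p,j}$ the final inequality fails for $(2,4)$ as well as $(3,4)$, since $7/10-73/301<1/2$. The root enclosures $\mathcal L_{2,4}>0.7483$ and $\mathcal L_{3,4}>0.6945$ are genuinely required. Second, the tail bounds of Section~\ref{sec:compact} are calibrated for $n$ times a coefficient. Reusing them at order $n^{-3}$ requires rechecking their monotonicity in $n$ with the extra powers of $n$, or using the sharper multipliers $|\e^{-5\ii\theta}-1|\le 5|s|/n$ as the paper does.
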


\subsection{The midpoint and the Gaussian error}

Write $d=5m+j$. Subtracting the two Cauchy integrals inserts $\e^{-\ii d\theta}(\e^{-5\ii\theta}-1)$. On $\theta=2\pi h/5+s/n$ this is exactly

\begin{equation}\label{eq:local-midpoint-factor}
 -2\ii\sin\frac{5s}{2n}\;\zeta^{-hj}\e^{-\ii(d+5/2)s/n}.
\end{equation}

The midpoint shift remains in $D$. Set $a_0=1/10$ and

\begin{equation}\label{eq:local-phase}
 \Phi(s)=pn\{F(t-\ii s)-F(t)+\ii F'(t)s\}-\ii Ds.
\end{equation}

The four core arcs give

\begin{equation}\label{eq:local-core-integral}
 \sqrt{\frac{n\nu_p}{\pi}}\int_{-a_0}^{a_0}
 \e^{\Phi(s)}\left(-2\ii\sin\frac{5s}{2n}\right)
 \left\{\Gamma_{p,j}(t-\ii s)+\frac{B_{p,j}(t-\ii s)}n+R_n(s)\right\}\,ds.
\end{equation}

The denominators in the shifted Euler--Maclaurin sums exceed $2/3$. With $Z=\sqrt{t^2+a_0^2}$, $E\ge\max_{h,s}|E_h(t-\ii s)|$, and $A\ge\max_{h,s}|\e^{pe_h(t-\ii s)}|$, the endpoint and periodic third-Bernoulli remainders give

\begin{equation}\label{eq:local-em-remainder}
 \begin{aligned}
 R_{\log}&=\frac1{120}
 \left\{\frac{8(5Z)^2}{(2/3)^2}+\frac{8(5Z)^3}{(2/3)^3}\right\},\\
 R_*&=4A\left\{pR_{\log}+\frac{p^2}{2}(E+R_{\log}/n_0)^2
       \e^{pE/n_0+pR_{\log}/n_0^2}\right\},\qquad
 |R_n(s)|\le R_*/n^2.
 \end{aligned}
\end{equation}

Indeed, the logarithmic remainder is bounded by $R_{\log}/n^2$; multiply by $p$, exponentiate, and add the four arcs. Since $\Gamma_{p,j}(t)=0$, Taylor's formula gives $|\Gamma_{p,j}(t-\ii s)+\ii gs|\le G_2s^2/2$, with $G_2$ as below. The two linear factors produce $-5gs^2/n$, whose full Gaussian integral is

\begin{equation}\label{eq:local-leading-integral}
 -\frac{5g}{n}\sqrt{\frac{n\nu_p}{\pi}}
 \int_{\mathbb R}s^2\e^{-n\nu_ps^2}\,ds=-\frac{\mathcal L_{p,j}}{n^2}.
\end{equation}

On $t-\ii[-a_0,a_0]$, choose $M_k\ge|F^{(k)}|$, $G_k\ge|\Gamma_{p,j}^{(k)}|$, and $b_k\ge|B_{p,j}^{(k)}|$. For $V\in[V_-,V_+]$, put $\nu_{p,\pm}=pV_\pm/2$ and

\begin{equation}\label{eq:local-moments}
 \begin{aligned}
 \eta&=\nu_{p,-}-\frac{pM_4a_0^2}{24}>0,\qquad
 \mathfrak m_k=\sqrt{\frac{\nu_{p,+}}\eta}\frac{(2k)!}{4^kk!\eta^k},\\
 \sqrt{\frac{n\nu_p}{\pi}}\int_{\mathbb R}s^{2k}\e^{-n\eta s^2}\,ds
 &\le\mathfrak m_kn^{-k}.
 \end{aligned}
\end{equation}

Write $\Phi=-n\nu_ps^2+q+\ii\psi$ and $\Gamma_{p,j}(t-\ii s)=a(s)+\ii b(s)$. The functions $q,a$ are even and $\psi,b$ are odd. For $|D|\le D_p$,

\begin{equation}\label{eq:local-parity-bounds}
 \begin{gathered}
 |q|\le pnM_4s^4/24,\qquad |\psi|\le pnM_3|s|^3/6+D_p|s|,\\
 |a(s)|\le G_2s^2/2,\qquad |b(s)+gs|\le G_3|s|^3/6.
 \end{gathered}
\end{equation}

After the Gaussian is removed, the pointwise real part of the $\Gamma_{p,j}$ contribution is $2\e^q\sin(5s/(2n))\{b\cos\psi+a\sin\psi\}$. The terms $B_{p,j}/n$ and $R_n$ are estimated separately. Apply $|\sin\psi|\le|\psi|$, $|\cos\psi-1|\le\psi^2/2$, and $|\e^q-1|\le|q|\e^{\max(q,0)}$. The following quantities bound the separate errors multiplied by $n^3$:

\begin{equation}\label{eq:local-difference-costs}
 \begin{aligned}
 C_q&=5gpM_4\mathfrak m_3/24,\\
 C_{\cos}&=\frac{5g}{2}\left(p^2M_3^2\mathfrak m_4/36+
                   pM_3D_p\mathfrak m_3/3+D_p^2\mathfrak m_2\right),\\
 C_b&=5G_3\mathfrak m_2/6,\qquad
 C_a=\frac{5G_2}{2}(pM_3\mathfrak m_3/6+D_p\mathfrak m_2),\\
 C_B&=5\{b_1\mathfrak m_1+b_0(pM_3\mathfrak m_2/6+D_p\mathfrak m_1)\},\\
 C_{\sin}&=\frac{125g\mathfrak m_2}{24n_0^2},\qquad
 C_R=\frac{5R_*\sqrt{\nu_{p,+}}}{\sqrt\pi\,\eta\sqrt{n_0}},\\
 C_{\rm ext}&=5g\sqrt{\frac{\nu_{p,+}}\pi}\e^{-n_0\nu_{p,-}a_0^2}
 \left(\frac{a_0n_0^{3/2}}{\nu_{p,-}}+
 \frac{\sqrt{n_0}}{2\nu_{p,-}^2a_0}\right).
 \end{aligned}
\end{equation}

The last line extends the leading Gaussian moment beyond the core; it decreases for $n\ge n_0$, since $n_0\nu_{p,-}a_0^2>3$. Table~\ref{tab:local-derivatives} gives rational upper bounds on the $200$ closed segments $s\in[k/2000,(k+1)/2000]$, $0\le k<200$; conjugation covers the other half.

\begin{table}[ht]\centering

\caption{Upper bounds on the four critical cores.}\label{tab:local-derivatives}

\begin{tabular}{c|rrrr}\hline
 $(p,j)$&$(2,3)$&$(2,4)$&$(3,3)$&$(3,4)$\\\hline
 $M_3$&$101.421$&$74.105$&$85.757$&$80.577$\\
 $M_4$&$1912.830$&$928.420$&$938.616$&$768.059$\\
 $M_5$&$73651.995$&$28193.630$&$19861.431$&$16093.267$\\
 $G_2$&$43.309$&$31.630$&$131.187$&$121.753$\\
 $G_3$&$479.585$&$321.137$&$2436.374$&$1681.618$\\
 $G_4$&$9568.637$&$4353.346$&$54236.856$&$36564.903$\\
 $b_0$&$0.032752$&$0.080742$&$0.046921$&$0.094399$\\
 $b_1$&$0.414920$&$0.579703$&$0.966638$&$1.187157$\\
 $b_2$&$2.772864$&$7.478254$&$13.668636$&$16.907336$\\
 $E$&$0.032517$&$0.037763$&$0.023189$&$0.026126$\\
 $A$&$1.380191$&$1.324981$&$1.826608$&$1.757513$\\\hline
\end{tabular}\end{table}

The real critical values satisfy

\begin{equation}\label{eq:local-root-data}
 \begin{array}{c|cc}
 (p,j)&V&g\\\hline
 (2,3)&(11.64002849095903,11.64002849095904)&(6.28561594418516,6.28561594418517)\\
 (2,4)&(10.20189235894063,10.20189235894065)&(3.05395019318923,3.05395019318924)\\
 (3,3)&(14.33141430922266,14.33141430922267)&(11.68556492036841,11.68556492036842)\\
 (3,4)&(13.47855498837348,13.47855498837349)&(5.61695538824632,5.61695538824633)
 \end{array}
\end{equation}

In this order, $\eta>10.046,9.428,20.323,19.257$. Derivatives follow by explicit differentiation and algebraic recurrences from $(zF(z))'=\log(1+\e^{-5z}+\cdots+\e^{-20z})$ and the endpoint functions. The critical parameters are enclosed by $180$ rational bisections of their defining cubic polynomials. For the square, the dilogarithm expansion through $110$ terms has remainder at most $|u|^{111}/(111^2(1-|u|))$. For the cube, the Bernoulli expansion of $F$ in Section~\ref{sec:compact}, with its explicit derivative remainder, avoids cancellation at the smaller critical radii. Substitution of the displayed upper bounds into \eqref{eq:local-difference-costs} gives, in the same order,

\begin{equation}\label{eq:local-core-totals}
 \sum C_\bullet<143.316,\quad72.829,\quad119.589,\quad77.191.
\end{equation}

\subsection{The complementary arcs}

For $p=2$, take $t=t_{2,j}$ in the following bounds.

We give bounds for the whole remaining circle.  Near a nontrivial fifth
root, let $1/10\le|s|\le5$.  The error of a shifted rectangular sum is
at most its total variation, whence
\begin{equation}\label{eq:local-variation}
 \begin{split}
 \log|Q_n(\e^{-t/n+\ii\theta})|
 &\le n\Rea F(t-\ii s)+T(t,s),\\
 T(t,s)&=\sum_{h=1}^4\int_0^1
 \frac{|ar_x(\cos\phi_h-r_x)+br_x\sin\phi_h|}
 {(1-r_x)^2+2r_x(1-\cos\phi_h)}\,dx,\\
 a&=5t,\quad b=5s,\quad r_x=\e^{-ax},\quad
 \phi_h=2\pi h/5+bx.
 \end{split}
\end{equation}
For a closed interval $[\ell,u]$ in the positive $s$ axis, let
$\Delta=u-\ell$, let $\mathcal L$ be a positive lower bound for
$F(t)-\Rea F(t-\ii s)$, and let $\mathcal T$ bound $T(t,s)$ from above.
Its contribution to the error multiplied by $n^3$ is at most
\begin{equation}\label{eq:local-primitive-tail}
 40\sqrt{V_+/\pi}\,301^{5/2}\Delta u
 \e^{-2(301\mathcal L-\mathcal T)}.
\end{equation}
All four roots and both sides are included.  Use the intervals
$[k/1000,(k+1)/1000]$, $100\le k<200$, and
$[k/100,(k+1)/100]$, $20\le k<500$.
Positive rectangular upper sums in the $x$ variable, with between
$100$ and $3200$ equal subdivisions, give
$301\mathcal L-\mathcal T>12$ and $602\mathcal L>5/2$ on every cell.
The latter inequality makes the bound valid for every $n\ge301$.

For the far arcs, delete the $5/n$ neighbourhoods of the four nontrivial
roots and the $14/n$ neighbourhood of $1$.  The geometric identity
\begin{equation}\label{eq:local-geometric-sum}
 m_n(q)=\sum_{\substack{k<5n\\5\nmid k}}q^k
 =(1-q^{5n})\left(\frac1{1-q}-\frac1{1-q^5}\right)
\end{equation}
and $\e^{5t/(2n)}<101/100$ imply
\begin{equation}\label{eq:local-far-gap}
 \begin{split}
 \frac{|m_n(q)|}{4n}&\le M,
 \qquad M=\max\left\{\frac{101\pi}{7000},
 \frac1{301}+\frac{101\pi}{10000}\right\},\\
 H(t)&=\frac{1-\e^{-10t}}{10t},\qquad
 g_\infty=F(t)-2\log(1+H(t)+1/301+2M)>0.
 \end{split}
\end{equation}
For angles at distance at most $\pi/5$ from $1$, use the alternative
numerator $q+q^2+q^3+q^4$ in \eqref{eq:local-geometric-sum} and the
$14/n$ exclusion.  Otherwise $|1-q|^{-1}<2$ and the $5/n$ exclusions
bound the second denominator.  The inequality
$|1-r\e^{\ii\theta}|\ge2\sqrt r\,|\sin(\theta/2)|$ yields the two
terms in $M$.  Applying the arithmetic--geometric mean inequality to
the squared factors, and using
$\sum_{5\nmid k}|q|^{2k}/(4n)\le H(t)+1/301$, bounds the far-arc
contribution by
\begin{equation}\label{eq:local-far-tail}
 2\sqrt{4\pi V_+}\,301^{9/2}\e^{-602g_\infty}
\end{equation}
after multiplication by $n^3$.  Here $602g_\infty>9/2$.

On the remaining arc $\theta=s/n$, $|s|\le14$, put $\rho=99/100$.
The inequality $|1-w|\le\rho^{-1/2}|1-\rho w|$, valid for $|w|\le1$,
removes the zero at $w=1$.  With $a=5t$, define
\begin{equation}\label{eq:local-rootone-gap}
 \begin{split}
 C_\rho&=\sum_{k=1}^{80}\frac{\rho^k}{k^2},\qquad
 U_\rho=\Li_2(\rho\e^{-a}),\\
 g_1&=F(t)-\frac{U_\rho^2}{aC_\rho}+2\log\rho,\\
 T_1&=4\left\{\log100+
 \sum_{k=0}^{22}\log\frac{1+\rho\e^{-ak\pi/70}}
 {1-\rho\e^{-ak\pi/70}}\right\}.
 \end{split}
\end{equation}
To see the height bound, the smoothed integral is
\begin{equation}\label{eq:local-smoothed-integral}
 \Rea\frac{\Li_2(\rho\e^{-a+\ii b})-\Li_2(\rho)}{a-\ii b}
 \le\frac{U_\rho^2}{4a\Li_2(\rho)}
 \le\frac{U_\rho^2}{4aC_\rho}.
\end{equation}
The first inequality follows by completing the square in
$1/\sqrt{a^2+b^2}$.  Radial variation is at most $\log100$.
On each angular half-period the variation is at most
$\log((1+r)/(1-r))$, with $r$ evaluated at its initial point.
There are at most $23$ such half-periods, since $|b|\le70<23\pi$;
this proves the formula for $T_1$.  The resulting contribution to the
error multiplied by $n^3$ is at most
\begin{equation}\label{eq:local-rootone-tail}
 5\sqrt{V_+/\pi}\,301^{5/2}\,14^2
 \e^{-2(301g_1-T_1)}.
\end{equation}
Here $301g_1-T_1>15$ and $602g_1>5/2$.

Adding \eqref{eq:local-primitive-tail}, \eqref{eq:local-far-tail}, and
\eqref{eq:local-rootone-tail} gives, respectively,
\begin{equation}\label{eq:local-tail-totals}
 \mathcal T_{2,3}<7.057\cdot10^{-9},\qquad
 \mathcal T_{2,4}<1.071\cdot10^{-6}.
\end{equation}

For $p=3$, the critical radii lie in $[90/1000,95/1000]$ and $[105/1000,110/1000]$, respectively. Use the whole-circle bounds \eqref{compact:primitive-tail}--\eqref{compact:far-tail}. Write $U_\lambda$ for the appropriate bound on $n$ times the ordinary normalized integral over an arc, evaluated at $n_0$. For a primitive-root cell, a root-$1$ cell with $b=5n\theta$, the central root-$1$ arc, and the far arcs, the contributions are

\begin{equation}\label{eq:local-cubic-tail-conversion}
 \begin{array}{c|cc}
 &n^3|\Delta\mathcal A|&n^2|\mathcal A|\\\hline
 a_0\le |s|\le5&5s_+n_0U_\lambda&n_0U_\lambda\\
 1/2\le |b|\le70&b_+n_0U_\lambda&n_0U_\lambda\\
 |b|\le1/2&n_0U_\lambda/2&n_0U_\lambda\\
 \mathrm{far}&2n_0^2U_\lambda&n_0U_\lambda
 \end{array}
\end{equation}

Here $s_+,b_+$ are upper cell endpoints. These multipliers follow from $|\e^{-5\ii\theta}-1|\le5|s|/n$, $|\e^{-5\ii\theta}-1|\le|b|/n$, and the universal bound $2$. The primitive-root grid has $50$, $30$, $90$ cells on $[0.1,0.2]$, $[0.2,0.5]$, $[0.5,5]$; the root-$1$ grid has $95$, $60$ cells on $[0.5,10]$, $[10,70]$. Writing $\ell$ for the phase loss, the stronger inequalities $3n_0\ell>5/2$ hold on the finite cells and central root-$1$ arc, while $3n_0\ell>9/2$ holds for the far gap. Thus the bounds apply to every $n\ge n_0$. Their sums satisfy

\begin{equation}\label{eq:local-cubic-tail-totals}
 \begin{array}{c|cc}
 j&\mathcal T_{3,j}&\mathcal U_{3,j}\\\hline
 3&4.11\cdot10^{-17}&7.761\cdot10^{-17}\\
 4&5.05\cdot10^{-16}&9.842\cdot10^{-16}
 \end{array}
\end{equation}

The entries in the second and third columns are strict upper bounds for the total complementary error multiplied by $n^3$ for the difference and by $n^2$ for a coefficient, respectively. The core and complementary bounds prove \eqref{eq:local-difference-bound}. The root enclosures also give

\begin{equation}\label{eq:local-slope-margins}
 \begin{gathered}
 \mathcal L_{2,3}>1.35000012007429,\quad
 \mathcal L_{2,4}>0.74837836102848,\\
 \mathcal L_{3,3}>1.35896856470153,\quad
 \mathcal L_{3,4}>0.69455459593547,\\
 \mathcal L_{p,j}-C^{\Delta}_{p,j}/301>\delta_p,\qquad
 5w_p+\frac{10+j+5/2}{301}<D_p.
 \end{gathered}
\end{equation}

Proposition~\ref{prop:local-difference} follows.

\subsection{The first correction to a coefficient}

For a single coefficient, use $D=(d+pn^2F'(t))/n$, without $5/2$. Its first nonzero term is

\begin{equation}\label{eq:local-linear-coefficient}
 \begin{aligned}
 &\frac1n\left(H_{p,j}-\frac{gD}{pV}\right),\qquad
 \beta_{p,j}=\frac{pVH_{p,j}}{5g},\\
 H_{p,j}&=B_{p,j}(t)-\frac{\Gamma''_{p,j}(t)}{2pV}
             +\frac{gF'''(t)}{2pV^2}.
 \end{aligned}
\end{equation}

Put $g_2=|\Gamma''_{p,j}(t)|$, $f_3=|F'''(t)|$, $\widehat b_0=|B_{p,j}(t)|$, and $\widehat b_1=|B'_{p,j}(t)|$. For $|D|\le D_0$, the separate core errors multiplied by $n^2$ are bounded by

\begin{equation}\label{eq:local-coefficient-costs}
 \begin{aligned}
 E_1&=G_4\mathfrak m_2/24,\qquad E_2=g_2pM_4\mathfrak m_3/48,\\
 E_3&=\frac{g_2}{4}\left(p^2M_3^2\mathfrak m_4/36+
                  pM_3D_0\mathfrak m_3/3+D_0^2\mathfrak m_2\right),\\
 E_4&=\frac{G_3}{6}(pM_3\mathfrak m_3/6+D_0\mathfrak m_2),\\
 E_5&=\frac g6\left(p^3M_3^3\mathfrak m_5/216+
        p^2M_3^2D_0\mathfrak m_4/12+
        pM_3D_0^2\mathfrak m_3/2+D_0^3\mathfrak m_2\right),\\
 E_6&=gpM_5\mathfrak m_3/120,\\
 E_7&=\frac{gpM_4}{24}(pf_3\mathfrak m_4/6+D_0\mathfrak m_3),\\
 E_8&=\widehat b_0pM_4\mathfrak m_2/24,\\
 E_9&=\frac{\widehat b_0}{2}\left(p^2M_3^2\mathfrak m_3/36+
                      pM_3D_0\mathfrak m_2/3+D_0^2\mathfrak m_1\right),\\
 E_{10}&=\widehat b_1(pM_3\mathfrak m_2/6+D_0\mathfrak m_1),\\
 E_{11}&=b_2\mathfrak m_1/2,\qquad E_{12}=R_*\mathfrak m_0.
 \end{aligned}
\end{equation}

The explicit Taylor remainders used here are

\begin{equation}\label{eq:local-coefficient-taylor}
 \begin{aligned}
 a(s)&=-\Gamma''_{p,j}(t)s^2/2+\varepsilon_a(s),
 &|\varepsilon_a(s)|&\le G_4|s|^4/24,\\
 b(s)&=-gs+\varepsilon_b(s),
 &|\varepsilon_b(s)|&\le G_3|s|^3/6,\\
 \psi_0&=pnF'''(t)s^3/6-Ds,
 &|\psi-\psi_0|&\le pnM_5|s|^5/120,\\
 B_{p,j}(t-\ii s)&=B_{p,j}(t)-\ii B'_{p,j}(t)s+\varepsilon_B(s),
 &|\varepsilon_B(s)|&\le b_2s^2/2.
 \end{aligned}
\end{equation}

In the paired real integrand $\e^q(a\cos\psi-b\sin\psi)$, $E_1,\ldots,E_4$ account for the fourth-order amplitude remainder, the real exponential, the cosine, and the odd cubic amplitude remainder. The next three come from $|\sin\psi-\psi|\le|\psi|^3/6$, the fifth-order phase remainder, and the real exponential multiplying the odd leading term. The next four arise from $B_{p,j}/n$; the last is the folded Euler--Maclaurin remainder. Thus the factors $p/6$ and $p/24$ retain the additional phase and exponential contributions in the cube.

For the Gaussian extensions put $P=n_0\nu_{p,-}$, $W=\sqrt{n_0\nu_{p,+}/\pi}\e^{-Pa_0^2}$ and

\begin{equation}\label{eq:local-coefficient-extension}
 \begin{aligned}
 T_0&=W/(Pa_0),\quad T_2=Wa_0/P+T_0/(2P),\quad T_4=Wa_0^3/P+3T_2/(2P),\\
 E_{13}&=n_0^2\{(g_2/2+gD_0)T_2+gpn_0f_3T_4/6+\widehat b_0T_0/n_0\},\\
 E_{14}&=\begin{cases}2\mathcal T_{2,j},&p=2,\\
                         \mathcal U_{3,j},&p=3.\end{cases}
 \end{aligned}
\end{equation}

The Gaussian bounds decrease for $n\ge n_0$. In the square, the ordinary-coefficient bound follows from the same arc majorants: the ratio to the difference bound is $1/(5s_+)\le2$ on a primitive-root cell, $28/(5\cdot14^2)<2$ on the root-$1$ arc, and $1/(2n_0)<2$ on the far arcs. No lower bound on the sine factor is used. In the cube both bounds were retained separately in \eqref{eq:local-cubic-tail-conversion}. Therefore

\begin{equation}\label{eq:local-coefficient-bound}
 \left|\mathcal A_{p,j,n}(m)-\frac1n
       \left(H_{p,j}-\frac{gD}{pV}\right)\right|
 \le\frac{C_{p,j}(D_0)}{n^2},\qquad
 C_{p,j}(D_0)\ge\sum_{k=1}^{14}E_k.
\end{equation}
\begin{equation}\label{eq:local-coefficient-totals}
 \begin{array}{c|c|cc}
 p&D_0&C_{p,3}(D_0)&C_{p,4}(D_0)\\\hline
 2&11&166&76\\
 2&3&80&34\\
 3&6&29&26
 \end{array}
\end{equation}

The corresponding unrounded sums using the public enclosures are less than $165.084,75.924$; $78.935,33.548$; and $28.404,25.097$, respectively.

\subsection{Integer witnesses and transition indices}

\begin{proposition}\label{prop:local-location}

For $n\ge301$, the nonzero terms in $|m-\alpha_{p,j}n^2|\le w_pn$ change sign exactly once, from positive to negative, and at most one term in the window is zero. If $K_{p,j}^{\rm loc}(n)$ is its last positive index, then

\begin{equation}\label{eq:local-index-bounds}
 |K_{p,j}^{\rm loc}(n)-\alpha_{p,j}n^2-\beta_{p,j}n|<R_{p,j},\qquad
 (R_{2,3},R_{2,4},R_{3,3},R_{3,4})=(61,48,24,40).
\end{equation}
\end{proposition}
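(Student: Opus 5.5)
Within the window $|m-\alpha_{p,j}n^2|\le w_pn$ the sign pattern comes from Proposition~\ref{prop:local-difference}, and the location of the sign change comes from the coefficient expansion \eqref{eq:local-coefficient-bound}. All weights in $\mathcal A_{p,j,n}$ are positive, so signs and zeros of $c^{(p)}_{5m+j}(n)$ coincide with those of $\mathcal A_{p,j,n}(m)$.

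\emph{Monotonicity.} By \eqref{eq:local-strict-difference}, $\mathcal A_{p,j,n}(m)$ is strictly decreasing in $m$ throughout the window, including two steps beyond each end. A strictly decreasing sequence is positive, then vanishes at no more than one index, then is negative. This already gives the single transition, from positive to negative, and at most one zero. Combining this with Proposition~\ref{prop:compact-window} at the window edges $m\approx\alpha_{p,j}n^2\pm w_pn$, the first term of the window is positive and the last is negative. Hence $K^{\rm loc}_{p,j}(n)$ is well defined and lies strictly inside the window.

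\emph{Location.} Next I turn \eqref{eq:local-coefficient-bound} into integer witnesses. For a single coefficient, $D=(5m+j-5\alpha_{p,j}n^2)/n$, since $-pn^2F'(t)=5n^2\mu_p(t)=5\alpha_{p,j}n^2$. The main term is $n^{-1}\bigl(H_{p,j}-gD/(pV)\bigr)$. By \eqref{eq:beta} it vanishes at $D^*=pVH_{p,j}/g=5\beta_{p,j}$, that is, at $m^*=\alpha_{p,j}n^2+\beta_{p,j}n-j/5$. The main term has slope $-g/(pV)$ in $D$, so
\[
\mathcal A_{p,j,n}(m)=-\frac{5g}{pVn^2}\,(m-m^*)+O\!\left(\frac{C_{p,j}(D_0)}{n^2}\right).
\]
This forces a positive sign whenever $m^*-m>pVC_{p,j}(D_0)/(5g)=C_{p,j}(D_0)/\mathcal L_{p,j}$, and a negative sign when $m-m^*$ exceeds the same quantity. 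I would use the smaller constant $D_0=3$ for $p=2$ and $D_0=6$ for $p=3$. This is legitimate once the witnesses satisfy $|D|\le D_0$, because $|5\beta_{p,j}|$ plus $5$ times the margin, divided by $n\ge301$, stays below $D_0$. The one point to check is that the witness $m$ values lie inside the window where $|D|\le 5w_p+o(1)$. One cannot simply take $D_0=5w_p$, since $5w_2=15$ exceeds $11$ for $p=2$. So the $D_0=11$ row, or even $3$, must cover all witnesses, and it does because they sit within $O(1)$ of $m^*$.

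\emph{Constants.} The final check is the numerical one. A positive witness at $m_+=\lfloor m^*-C/\mathcal L\rfloor$ and a negative witness at $m_-=\lceil m^*+C/\mathcal L\rceil$, combined with monotonicity, place $K^{\rm loc}$ in $[m_+,m_-)$. Hence
\[
|K^{\rm loc}-\alpha n^2-\beta n|<C/\mathcal L+1+j/5 .
\]
With the table values this gives roughly $80/1.35+2\approx61$ for $(2,3)$, $34/0.748+2\approx48$ for $(2,4)$, $29/1.3589+2\approx24$ for $(3,3)$, and $26/0.6945+2\approx40$ for $(3,4)$. These match $(61,48,24,40)$, and the one real obstacle is making them strict. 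If the rounding is tight, I would use the unrounded sums $78.935$, $33.548$, $28.404$, $25.097$ and track the $j/5$ shift exactly.
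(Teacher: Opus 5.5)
Your route is the paper's. You get strict decrease of $\mathcal A_{p,j,n}$ from Proposition~\ref{prop:local-difference}, integer witnesses from \eqref{eq:local-coefficient-bound} with $D_0=3$ for the square and $D_0=6$ for the cube, and the final arithmetic $C_{p,j}(D_0)/\mathcal L_{p,j}+1+j/5<R_{p,j}$ as in \eqref{eq:local-final-radii}.

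\textbf{The gap.} The inference in your monotonicity paragraph is invalid. Proposition~\ref{prop:compact-window} gives $\mathcal A_{p,j,n}(m)>0$ only for $m<\alpha_{p,j}n^2-w_pn$ and $\mathcal A_{p,j,n}(m)<0$ only for $m>\alpha_{p,j}n^2+w_pn$, that is, strictly outside the window. A decreasing sequence does not carry positivity to the right or negativity to the left. Combined with monotonicity on the enlarged range, this only places the sign change somewhere in $|m-\alpha_{p,j}n^2|\le w_pn+2$. So it does not exclude, for instance, every window term being negative. This is exactly why the paper, after its wide cubic witnesses \eqref{eq:local-cubic-wide-witnesses}, says closer witnesses are needed ``to place the transition inside the stated window.''

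The existence of $K^{\rm loc}_{p,j}(n)$, and the fact that the window's nonzero terms really change sign, must come from your two witnesses. For that you have to check that they lie in the window:
\[
|m_\lambda-\alpha_{p,j}n^2|\le|\beta_{p,j}|n+j/5+C_{p,j}(D_0)/\mathcal L_{p,j}+1<w_pn,
\]
as the paper does in \eqref{eq:local-witness-domains}. Your remark that the witnesses sit within $O(1)$ of $m^*$ is the right idea. State it as window membership, not only as $|D|\le D_0$, and drop the compact-window claim.

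\textbf{Smaller points.}
\begin{itemize}
\item Strictness of the witnesses: with $m_+=\lfloor m^*-C/\mathcal L\rfloor$ you only get $m^*-m_+\ge C/\mathcal L$, whereas your sign criterion needs a strict inequality. The paper uses $m_L=\lceil S_{p,j,n}-a_{p,j}\rceil-1$ and $m_R=\lfloor S_{p,j,n}+a_{p,j}\rfloor+1$ so that strictness holds even when the threshold is an integer. Alternatively, invoke the slack between $C_{p,j}(D_0)$ and the unrounded sums.
\item The $|D|$ check for $(2,4)$ is tight: $|D(m_\lambda)|<3$ evaluates to about $2.9$, so write it out.
\item The radii need no unrounded sums. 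Your ``$+2$'' would give $80/1.35+2>61$, but with the exact $1+j/5$ the rounded constants already suffice: $80/\mathcal L_{2,3}+8/5<61$, $34/\mathcal L_{2,4}+9/5<48$, $29/(27/20)+8/5<24$, and $26/(69/100)+9/5<40$.
\end{itemize}
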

\begin{proof}

For the square take $m_- =\lfloor\alpha_{2,j}n^2-2n\rfloor$ and $m_+=\lceil\alpha_{2,j}n^2+2n\rceil$. Then $D\in[-11,-9]$ and $D\in[9,11]$, respectively, and the full coefficient budgets give

\begin{equation}\label{eq:local-endpoint-margins}
 \begin{array}{c|cc}
 j&n\mathcal A_{2,j,n}(m_-)&-n\mathcal A_{2,j,n}(m_+)\\\hline
 3&>1.7402&>2.0228\\
 4&>1.4115&>0.7781
 \end{array}
\end{equation}

For the cube, wide endpoint signs also follow from Proposition~\ref{prop:compact-window}, by taking

\begin{equation}\label{eq:local-cubic-wide-witnesses}
 m_- =\lfloor\alpha_{3,j}n^2-6n\rfloor-1,\qquad
 m_+ =\lceil\alpha_{3,j}n^2+6n\rceil+1.
\end{equation}

These lie strictly outside the window but within $|m-\alpha_{3,j}n^2|\le6n+2$. Their ordinary saddles lie in $(0,1/2)$, since

\begin{equation}\label{eq:local-cubic-interface}
 \begin{aligned}
 \mu_3(1/2)&<\alpha_{3,j}-6/301-2/301^2,\\
 \alpha_{3,3}+6/301+(2+3/5)/301^2&<2.157<3,\\
 \alpha_{3,4}+6/301+(2+4/5)/301^2&<2.005<3.
 \end{aligned}
\end{equation}

The upper interface retains the residue shift. The left endpoint is positive and the right endpoint negative. To place the transition inside the stated window and sharpen its location, we give closer integer witnesses for both powers.

\begin{equation}\label{eq:local-center}
 S_{p,j,n}=\alpha_{p,j}n^2+\beta_{p,j}n-j/5,\qquad
 D_{1,p,j}=5\beta_{p,j},\qquad d_2=3,\quad d_3=6.
\end{equation}

The leading term is $-\mathcal L_{p,j}(m-S_{p,j,n})/n^2$. Put $a_{p,j}=C_{p,j}(d_p)/\mathcal L_{p,j}$ and choose

\begin{equation}\label{eq:local-integer-witnesses}
 m_L=\lceil S_{p,j,n}-a_{p,j}\rceil-1,\qquad
 m_R=\lfloor S_{p,j,n}+a_{p,j}\rfloor+1.
\end{equation}

Even when a threshold is an integer,

\begin{equation}\label{eq:local-witness-distances}
 m_L<S_{p,j,n}-a_{p,j},\quad m_R>S_{p,j,n}+a_{p,j},\quad
 |m_\lambda-S_{p,j,n}|\le a_{p,j}+1\quad(\lambda\in\{L,R\}).
\end{equation}

The following rational bounds suffice:

\begin{equation}\label{eq:local-witness-budget}
 \begin{array}{c|cccc}
 (p,j)&(2,3)&(2,4)&(3,3)&(3,4)\\\hline
 a_{p,j}<&60&46&580/27&2600/69\\
 |\beta_{p,j}|<&0.105&0.424&0.208&0.887
 \end{array}
\end{equation}

Both required domains follow: for $n\ge301$,

\begin{equation}\label{eq:local-witness-domains}
 \begin{aligned}
 |D(m_\lambda)|&=\left|5\beta_{p,j}+5(m_\lambda-S_{p,j,n})/n\right|
 \le5|\beta_{p,j}|+5(a_{p,j}+1)/n<d_p,\\
 |m_\lambda-\alpha_{p,j}n^2|&\le|\beta_{p,j}|n+j/5+a_{p,j}+1<w_pn.
 \end{aligned}
\end{equation}

Thus $\mathcal A_{p,j,n}(m_L)>0$ and $\mathcal A_{p,j,n}(m_R)<0$. Strict monotonicity proves the single transition and the possible single zero in the window. It also gives

\begin{equation}\label{eq:local-integer-rounding}
 m_L\le K_{p,j}^{\rm loc}(n)<m_R,\qquad
 |K_{p,j}^{\rm loc}(n)-S_{p,j,n}|\le a_{p,j}+1.
\end{equation}

Finally,

\begin{equation}\label{eq:local-final-radii}
 \begin{aligned}
 80/\mathcal L_{2,3}+1+3/5&<61,&34/\mathcal L_{2,4}+1+4/5&<48,\\
 29/(27/20)+1+3/5&=3116/135<24,&26/(69/100)+1+4/5&=13621/345<40.
 \end{aligned}
\end{equation}

The shift $-j/5$ proves \eqref{eq:local-index-bounds}. The identification with global indices is completed in Section~\ref{sec:completion}. The assertion about zeros here is confined to the transition window.

\end{proof}

\section{The outer region and the exact boundary}\label{sec:boundary}

For both powers the two changes of sign lie in the central region.  The outer analysis has a common algebraic and modular foundation, but the cubic case has two vanishing limiting amplitudes.  We first develop the positive series needed at the boundary, treat the square, and then resolve this cubic cancellation on the whole coefficient contour.  Put
\begin{equation}\label{eq:boundary-c}
 c=\frac{2\pi^2}{75},\qquad
 (s_0,s_1,s_2,s_3,s_4)=(1,-1,-1,1,1).
\end{equation}

\subsection{Positive series and the square linear boundary}

We use the $q$-Pochhammer notation
\begin{equation}\label{eq:boundary-pochhammer}
 \begin{gathered}
 (a;q)_k=\prod_{\ell=0}^{k-1}(1-aq^\ell),\qquad
 (a;q)_\infty=\prod_{\ell=0}^{\infty}(1-aq^\ell),\qquad (a;q)_0=1,\\
 (a_1,\ldots,a_r;q)_k=\prod_{i=1}^r(a_i;q)_k
 \qquad\bigl(k\in\mathbb Z_{\ge0}\cup\{\infty\}\bigr).
 \end{gathered}
\end{equation}
In the finite product $k$ is a nonnegative integer; the infinite products are interpreted as formal series, or analytically when $|q|<1$.  For $s\in\mathbb Z_{\ge0}$ and a formal variable $X$, set
\begin{equation}\label{eq:boundary-RR}
 R_s(X)=\sum_{k\ge0}\frac{X^{k^2+sk}}{(X;X)_k},\qquad
 G=R_0,\quad H=R_1,\quad K=R_2,\quad L=R_3.
\end{equation}
Here $R_s$ counts partitions whose successive parts differ by at least two and whose smallest part is at least $s+1$.  Splitting according to the presence of the smallest allowed part gives
\begin{equation}\label{eq:boundary-recurrence}
 R_s=R_{s+1}+X^{s+1}R_{s+2},\qquad
 G=H+XK,\quad H=K+X^2L,\quad 0\preceq K\preceq H\preceq G\preceq(1+X)H.
\end{equation}
The relation $A\preceq B$ means that every coefficient of $B-A$ is nonnegative.  Write $Q_\infty(q)=\prod_{k\ge1,\,5\nmid k}(1-q^k)$.  The Rogers--Ramanujan identities and the classical five-dissection, in the form recorded in \cite[(3.3)--(3.4)]{Ma} and \cite[Section 3, (3.46)]{LW}, give
\begin{equation}\label{eq:boundary-dissection}
 G=\frac1{(X,X^4;X^5)_\infty},\quad
 H=\frac1{(X^2,X^3;X^5)_\infty},\quad
 Q_\infty(q)=G(q^5)^2-qG(q^5)H(q^5)-q^2H(q^5)^2.
\end{equation}

\begin{lemma}\label{lem:boundary-series}
The series
\begin{equation}\label{eq:boundary-C}
 \mathscr C_s=\frac{R_{s+1}^2-XR_sR_{s+2}}{1-X}
 =\sum_{r\ge0}X^{2r(r+s+1)}R_{s+2r+1}R_{s+2r+2}
\end{equation}
have nonnegative coefficients.  Moreover,
\begin{equation}\label{eq:boundary-C-bound}
 0\preceq\mathscr C:=\mathscr C_0
 =\frac{GH+H^2-G^2}{1-X}\preceq H^2.
\end{equation}
\end{lemma}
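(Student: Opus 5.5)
The plan is to prove the series identity in \eqref{eq:boundary-C} by a two-step telescoping relation between $N_s:=R_{s+1}^2-XR_sR_{s+2}$ and $N_{s+2}$. All manipulations are in formal power series in $X$, and convergence is $X$-adic. Each $R_s$ has constant term $1$ and nonnegative coefficients. Coefficientwise $R_{s+1}\preceq R_s$, since the partitions counted by $R_{s+1}$ form a subset of those counted by $R_s$.

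\textbf{Step 1 (key identity).} I would show
\[
 N_s-X^{2s+4}N_{s+2}=(1-X)R_{s+1}R_{s+2}.
\]
To do this, express everything in the two series $R_{s+2},R_{s+3}$ (and $R_{s+4}$) using \eqref{eq:boundary-recurrence}:
\[
 R_{s+1}=R_{s+2}+X^{s+2}R_{s+3},\qquad R_s=R_{s+1}+X^{s+1}R_{s+2},\qquad R_{s+2}=R_{s+3}+X^{s+3}R_{s+4}.
\]
Then both sides become polynomials in these series with monomial coefficients, and the identity is a finite algebraic check. This expansion is the only genuinely computational point, and it is the step I expect to need the most care. The cleanest route is probably to first rewrite
\[
 N_s=R_{s+1}^2-XR_{s+1}R_{s+2}-X^{s+2}R_{s+2}^2,
\]
and then the analogous expression for $N_{s+2}$, before substituting.

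\textbf{Step 2 (iterate).} Dividing Step 1 by $1-X$ gives $\mathscr C_s=R_{s+1}R_{s+2}+X^{2s+4}\mathscr C_{s+2}$. Iterating $r$ times, and using the exponent identity
\[
 2(s+2)+2(r-1)(r+s+2)=2r(r+s+1),
\]
yields the partial sums of \eqref{eq:boundary-C} plus a remainder $X^{2r(r+s+1)}\mathscr C_{s+2r}$. This remainder tends to $0$ $X$-adically, so the series identity follows. Nonnegativity of $\mathscr C_s$ is then immediate, since every summand is a product of series with nonnegative coefficients.

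\textbf{Step 3 (the bound $\mathscr C\preceq H^2$).} By \eqref{eq:boundary-recurrence}, $GH+H^2-G^2=R_1^2-XR_0R_2$, so $\mathscr C=\mathscr C_0=HK+\sum_{r\ge1}X^{2r(r+1)}R_{2r+1}R_{2r+2}$. Since $H^2=H(K+X^2L)$, it suffices to show
\[
 \sum_{r\ge1}X^{2r(r+1)}R_{2r+1}R_{2r+2}\preceq X^2HL.
\]
Bound $R_{2r+1}\preceq R_3=L$ for $r\ge1$, so it is enough to prove $T_1:=\sum_{r\ge1}X^{e_r}R_{2r+2}\preceq X^2H$, where $e_r=2r(r+1)$. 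For this I would prove by induction (modulo each power $X^N$, so only finitely many terms matter) that $T_r\preceq X^{e_r}R_{2r+1}$. The induction step uses $R_{2r+1}=R_{2r+2}+X^{2r+2}R_{2r+3}$ together with the inductive bound $T_{r+1}\preceq X^{e_{r+1}}R_{2r+3}$ and the gap $e_{r+1}-e_r=4r+4\ge 2r+2$. Finally, $X^4R_3\preceq X^2R_1=X^2H$, which completes \eqref{eq:boundary-C-bound}.
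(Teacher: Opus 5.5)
Your Steps 1 and 2 are the paper's argument: the same identity $N_s-(1-X)R_{s+1}R_{s+2}=X^{2s+4}N_{s+2}$, iterated. In Step 3 your reduction of $\mathscr C\preceq H^2$ to $T_1\preceq X^2H$ is valid. The gap is the inductive bound $T_r\preceq X^{e_r}R_{2r+1}$, which is \emph{false} for every $r\ge1$.

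For $r=1$, compare coefficients of $X^{12}$. In $T_1=X^4R_4+X^{12}R_6+\cdots$ the coefficient is $[X^8]R_4+[X^0]R_6=1+1=2$, coming from the partition $(8)$ and the empty partition. In $X^4R_3$ it is $[X^8]R_3=1$. In general, the coefficient of $X^{e_{r+1}}$ is $2$ on the left and $1$ on the right.

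The induction step fails for the same reason. After cancelling $X^{e_r}R_{2r+2}$, it needs $X^{e_{r+1}}R_{2r+3}\preceq X^{e_r+2r+2}R_{2r+3}$, that is, $X^{2r+2}R_{2r+3}\preceq R_{2r+3}$. A larger exponent gap does not by itself give a coefficientwise inequality, because multiplying a nonnegative series by $X^k$ need not decrease it. Here the empty partition, shifted to degree $2r+2$, has no image: every nonempty partition counted by $R_{2r+3}$ has weight at least $2r+4$. In fact $X^kR_s\preceq R_s$ holds only for $k=0$ or $k\ge s+1$.

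The target $T_1\preceq X^2H$ is true, so your route can be repaired with a weaker intermediate bound, such as $T_r\preceq X^{e_r-2r}R_{2r-1}$. For $r=1$ this is exactly the target. Proceed by downward induction modulo $X^N$, using three facts:
\begin{itemize}
\item $R_{2r-1}=R_{2r}+X^{2r}R_{2r+1}$;
\item $R_{2r+1}=R_{2r+2}+X^{2r+2}R_{2r+3}$;
\item $e_{r+1}-2r-2=e_r+2r+2$.
\end{itemize}
With these, the step reduces to $X^{4r+2}R_{2r+1}\preceq R_{2r}$. This holds because $4r+2\ge2r+1$: send the empty partition to the single part $4r+2$, and otherwise enlarge the largest part by $4r+2$.

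The paper instead avoids any shift comparison by a direct injection into $H^2$. Prepend $2,4,\ldots,2r$ to both rows of a pair counted by $X^{2r(r+1)}R_{2r+1}R_{2r+2}$; the result is counted by $H^2$. Then $r$ is recovered as the length of the initial even staircase of the second row. That length is exactly $r$, since the next part of the second row is at least $2r+3$.
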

\begin{proof}
Writing $\mathscr D_s=R_{s+1}^2-XR_sR_{s+2}$, two applications of \eqref{eq:boundary-recurrence} give
\[
 \mathscr D_s-(1-X)R_{s+1}R_{s+2}=X^{2s+4}\mathscr D_{s+2}.
\]
Iteration proves \eqref{eq:boundary-C}, since the valuation of the remaining formal series tends to infinity.  To prove the upper bound, take an object counted by the $r$th summand for $s=0$ and prepend the parts $2,4,\ldots,2r$ to both of its rows.  The resulting pair is counted by $H^2$.  Its second row has an initial consecutive even staircase of length exactly $r$, whereas the first row has one of length at least $r$.  Thus the operation is injective, and $r$ is recovered from its image.
\end{proof}

We shall also use a simple insertion inequality.  Suppose that $R(X)$ counts ordered $k$-tuples of these partitions, each row having minimum allowed part one or two, and write $R_u=[X^u]R$.  If $r\ge1$ and $M\ge2r+1$, then
\begin{equation}\label{eq:boundary-insertion}
 R_M\ge k\sum_{u=0}^{r-1}R_u+(k-1)R_r.
\end{equation}
Indeed, to a tuple of total weight $u<r$ insert $M-u$ in any row.  This new part differs from every old part by at least two and is the unique part larger than $M/2$.  For $u=r$, at most one row is excluded, namely a row containing the part $r$ when $M=2r+1$.  Deletion of the unique large part recovers both the source tuple and the chosen row.  The same argument, with $r+1$ in place of $r$, yields
\begin{equation}\label{eq:boundary-insertion-plus}
 R_M\ge k\sum_{u=0}^{r}R_u+(k-1)R_{r+1}\qquad(M\ge2r+3).
\end{equation}
If the inserted part is one, its admissible rows are counted separately below.

\begin{lemma}\label{lem:boundary-linear}
For every $n\ge2$ and $0\le d<10n$,
\begin{equation}\label{eq:boundary-linear-sign}
 s_{d\bmod5}c_d^{(2)}(n)\ge0.
\end{equation}
All these inequalities are strict except at $d=9$ and, when $n=2$, at $d=19$.  In particular, $c_9^{(2)}(n)=0$ for every $n\ge2$.
\end{lemma}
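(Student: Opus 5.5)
The plan is to compare $Q_n^2$ with the infinite square $Q_\infty^2$ in the range $d<10n$. We use the five-dissection \eqref{eq:boundary-dissection} for the main part and the insertion inequalities \eqref{eq:boundary-insertion}--\eqref{eq:boundary-insertion-plus} to absorb the finite-product correction.

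\emph{Step 1 (exact truncation).} Since $Q_n=Q_\infty\prod_{k\ge n}\prod_{a=1}^4(1-q^{5k+a})^{-1}$ and the smallest omitted exponent is $5n+1$, any product of two or more tail factors has degree at least $10n+2$. Modulo $q^{10n}$ we therefore get
\[
 Q_n^2\equiv Q_\infty^2\Bigl(1+2\sum_{\substack{5n<e<10n\\5\nmid e}}q^e\Bigr).
\]
So $c_d^{(2)}(n)=[q^d]Q_\infty^2$ for $d\le5n$. For $5n<d<10n$ there is an additional correction $2\sum_{e}[q^{d-e}]Q_\infty^2$ with $d-e<5n$.

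\emph{Step 2 (the infinite square).} Put $X=q^5$, $G=G(X)$, $H=H(X)$. Squaring \eqref{eq:boundary-dissection} gives
\[
 Q_\infty^2=G^4-2qG^3H-q^2G^2H^2+2q^3GH^3+q^4H^4 .
\]
The signs match $(s_0,\dots,s_4)$. Each component is a product of series with nonnegative coefficients and constant term $1$, so $s_{d\bmod5}[q^d]Q_\infty^2\ge0$. For strictness, a component $G^aH^b$ has a zero coefficient at $X^u$ only when no $u$ can be built. Since $H=1+X^2+\cdots$, this happens only for $H^4$ at $X^1$, i.e.\ $d=9$. Because $9<5n+1$ for $n\ge2$, this yields $c_9^{(2)}(n)=0$, together with strict signs for all other $d\le5n$.

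\emph{Step 3 (the correction for $5n<d<10n$; main obstacle).} Here the correction $2\sum_e [q^{d-e}]Q_\infty^2$ collects earlier coefficients of all residues $d-e\not\equiv d\pmod5$. These coefficients carry both signs, so the target term must dominate them.

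I would write the target coefficient as $[X^M]$ of a product $G^aH^b$ with $a+b=4$, where $M=\lfloor d/5\rfloor\ge n$. I would view it as counting ordered $4$-tuples of Rogers--Ramanujan partitions, each row having minimum allowed part $1$ or $2$. The correction is bounded in absolute value by $2\sum_{u}$ of the coefficients at $X^u$ of the various $G^{a'}H^{b'}$ with $u<M-n$ roughly. The prefactors $2$ from $Q_\infty^2$ and from Step 1 can be tracked by residue.

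I then apply \eqref{eq:boundary-insertion} with $k=4$ and $r\approx M/2$, or \eqref{eq:boundary-insertion-plus}, which is admissible since $d<10n$ forces $M\le 2n-1$. This inserts a large part $M-u$ in any of the four rows, so each tuple of weight $u$ is counted $4$ times. That should beat the factor-$2$ correction, after comparing $H$-rows with $G$-rows via $H\preceq G\preceq(1+X)H$. Where a $G$-row versus $H$-row mismatch costs a factor, Lemma~\ref{lem:boundary-series}, through $GH+H^2-G^2=(1-X)\mathscr C\succeq0$ and $\mathscr C\preceq H^2$, supplies the needed quadratic comparisons.

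The delicate points are the boundary indices ($u=r$, the excluded row, and inserted part equal to $1$), which are counted separately. I expect the only failure of strictness to be the case $n=2$, $d=19$, which I would check by direct expansion of $Q_2^2$. Strictness elsewhere follows from at least one surplus insertion, e.g.\ from $u=0$.
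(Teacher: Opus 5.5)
Your Steps 1 and 2 agree with the paper. These are the truncation \eqref{eq:boundary-tail}, the squared dissection \eqref{eq:boundary-square}, and the observation that for $d\le5n$ the only vanishing coefficient comes from $H^4$ lacking degree one, i.e.\ $d=9$.

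\textbf{The gap is in Step 3.} The tail correction cannot be bounded in absolute value and then dominated by the four-fold insertion.

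\emph{Why ``four insertions against a factor two'' is a miscount.} Write $c_{5m+j}^{(2)}(n)=[X^m]\bigl(S_j+2X^nN_j/(1-X)\bigr)$, where $N_j$ is the residue-$j$ component of $(q+q^2+q^3+q^4)Q_\infty^2$. The correction collects all four shifts, and $Q_\infty^2$ weights each of them by $1$ or $2$.

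\emph{Residue four.} Here $N_4=G^4-2G^3H-G^2H^2+2GH^3$, and $G\succeq H$ gives $G^4+2G^3H+G^2H^2+2GH^3\succeq6H^4$. So your bound for the correction at $m=n+r$ is at least $12\sum_{u\le r}a_u$, where $a_u=[X^u]H^4$. But \eqref{eq:boundary-insertion} yields only $a_m\ge4\sum_{u<r'}a_u+3a_{r'}$ with $2r'+1\le m$. At $m=2n-1$ this is at most $4\sum_{u\le r}a_u$, far below $12\sum_{u\le r}a_u$. Already for $n=2$, $d=14$, your majorant is $12$ against the target $a_2=4$, while the true correction is $2(2-1-2+1)=0$.

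\emph{Residues two and three.} They fail in the same way. There $G^4+2G^3H\succeq3G^2H^2$ and $G^4+2G^3H+G^2H^2\succeq4GH^3$. Your bounds are therefore at least $6$ and $8$ times the cumulative sums of the targets $G^2H^2$ and $GH^3$, whose coefficients carry only the factors $1$ and $2$.

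\emph{Residue one.} The comparisons you list give at best $|N_1|\preceq(1+5X)G^3H$. That means $R_r+6\sum_{u<r}R_u$ against the insertion bound $3R_r+4\sum_{u<r}R_u$. This cannot succeed once $R_r\le\sum_{u<r}R_u$, which happens for all large $r$ because the coefficients grow subexponentially.

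\textbf{The missing idea} is to keep the signs inside $N_j$ and rewrite it exactly. Use $G=H+XK$, $H=K+X^2L$, and the positive series $\mathscr C=(GH+H^2-G^2)/(1-X)$ and $\mathscr C_1$ of Lemma~\ref{lem:boundary-series}, whose role is to absorb the factor $1/(1-X)$. For example:
\begin{itemize}
\item $N_1=G^4+2XH^4-X^3H^2K^2\preceq(1+3X)G^3H$;
\item $N_4=-XGK(G+H)(2H-G)$, so $N_4/(1-X)=-X\bigl(GH^2K/(1-X)+GK\mathscr C\bigr)$;
\item residues two and three use \eqref{eq:boundary-residue-two} and \eqref{eq:boundary-residue-three}.
\end{itemize}
After this rewriting the net correction has multiplicity at most four, exactly matching the four rows in \eqref{eq:boundary-insertion}.

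Strictness, and the single exception $n=2$, $d=19$, are then read off from explicit leftovers such as $3a_{r+1}-2a_{r-1}$ and $a_{n+1}-4$. Expanding $Q_2^2$ by hand would not show that no other zero occurs.

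Residue zero is handled differently in the paper. Below degree $10n$, the residue components of the unsquared $Q_n$ satisfy $A_0$ strictly positive and $A_1,\dots,A_4\preceq0$. The convolution for $c_{5m}^{(2)}(n)$ pairs residues $(0,0)$, $(1,4)$, $(2,3)$, so every summand is nonnegative.
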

\begin{proof}
Throughout the proof the series on the right are evaluated at $X=q^5$.  Squaring \eqref{eq:boundary-dissection} gives
\begin{equation}\label{eq:boundary-square}
 Q_\infty(q)^2=G^4-2qG^3H-q^2G^2H^2+2q^3GH^3+q^4H^4.
\end{equation}
The first omitted factor of $Q_n$ has degree $5n+1$, and two tail parts have total degree at least $10n+2$.  Consequently,
\begin{equation}\label{eq:boundary-tail}
 Q_n(q)^2\equiv Q_\infty(q)^2
 \left(1+\frac{2X^n(q+q^2+q^3+q^4)}{1-X}\right)\pmod{q^{10n}}.
\end{equation}
Define the formal series $S_j(X)$ and $N_j(X)$ by
\begin{equation}\label{eq:boundary-components}
 \begin{aligned}
 Q_\infty(q)^2&=\sum_{j=0}^4q^jS_j(q^5),\\
 (q+q^2+q^3+q^4)Q_\infty(q)^2&=\sum_{j=0}^4q^jN_j(q^5).
 \end{aligned}
\end{equation}
Thus $S_j$ and $N_j$ exclude the external factor $q^j$.  Formula \eqref{eq:boundary-tail} gives
\begin{equation}\label{eq:boundary-coefficient}
 c_{5m+j}^{(2)}(n)=[X^m]\left(S_j+\frac{2X^nN_j}{1-X}\right),
 \qquad0\le m\le2n-1,
\end{equation}
where
\begin{equation}\label{eq:boundary-N}
\begin{aligned}
 N_1&=G^4+X(H^4+2GH^3-G^2H^2),\\
 N_2&=G^4-2G^3H+X(H^4+2GH^3),\\
 N_3&=G^4-2G^3H-G^2H^2+XH^4,\\
 N_4&=G^4-2G^3H-G^2H^2+2GH^3.
\end{aligned}
\end{equation}

For residue one, put $R=G^3H$.  The recurrences imply
\[
 N_1=G^4+2XH^4-X^3H^2K^2\preceq(1+3X)R.
\]
For $m=n+r$, $0\le r\le n-1$, the coefficient of $X^r$ in $N_1/(1-X)$ is therefore at most $R_r+4\sum_{u<r}R_u$.  Inequality \eqref{eq:boundary-insertion} with four rows makes this strictly smaller than $R_m$ when $r\ge1$.  For $r=0$, at least the three $G$ rows admit a part of weight $n$, so $R_n\ge3>R_0$.  Formula \eqref{eq:boundary-coefficient} now gives strict negativity.  The terms with $m<n$ follow directly from \eqref{eq:boundary-square}.

For residue two, put $T=G^2H^2$ and $B=H^4+2GH^3-G^2HK$.  Direct substitution gives
\begin{equation}\label{eq:boundary-residue-two}
\begin{aligned}
 \frac{N_2}{1-X}&=\frac{XB}{1-X}-G^2\mathscr C,\\
 2T-B&=2XH^3K+XH^2K^2+2X^2H^2K^2+X^2HK^3
       +X(1-X)H^2\mathscr C_1.
\end{aligned}
\end{equation}
After division by $1-X$, the second identity has a nonnegative right-hand side.  Hence $N_2/(1-X)\preceq2XT/(1-X)$, and \eqref{eq:boundary-insertion} gives
\[
 c_{5m+2}^{(2)}(n)\le-T_m+4\sum_{u<r}T_u<0\qquad(m=n+r).
\]
For $r=0$ the sum is empty; the earlier coefficients again follow from \eqref{eq:boundary-square}.

For residue three, write $R=GH^3$, $E=GH^2K$, $Z=GHK^2$, and $Y=KH^3$.  Then
\begin{equation}\label{eq:boundary-residue-three}
\begin{aligned}
 -\frac{N_3}{1-X}
 &=R+\frac{2XE+X^2Z+X^2Y}{1-X}+G^2\mathscr C\\
 &\preceq\frac{2+X+X^2}{1-X}R.
\end{aligned}
\end{equation}
Here $E,Z,Y\preceq R$ and $G^2\mathscr C\preceq G^2H^2=R+XE$.  At index $r\ge1$, the last bound is
\[
 2R_r+3R_{r-1}+4\sum_{u=0}^{r-2}R_u
 <3R_r+4\sum_{u=0}^{r-1}R_u\le R_m.
\]
For $r=0$ and $n\ge2$, all four rows admit a part of weight $n$, so $R_n\ge4>2$.  This proves strict positivity in residue three.

For residue four, put $R=H^4$ and $a_u=[X^u]R$, with $a_u=0$ for $u<0$.  One has
\begin{equation}\label{eq:boundary-residue-four}
 \frac{N_4}{1-X}=-X\left(\frac{GH^2K}{1-X}+GK\mathscr C\right),
 \qquad
 2\left(\frac{GH^2K}{1-X}+GK\mathscr C\right)
 \preceq\frac{4+2X-2X^2}{1-X}R.
\end{equation}
For the comparison, use $GH^2K\preceq(1+X)R$, $GK\mathscr C\preceq GH^2K$, and the nonnegative series $(2-X)/(1-X)$.  If $m=n+1+r\le2n-1$, the possible subtraction is at most
\[
 4\sum_{u=0}^{r-2}a_u+6a_{r-1}+4a_r,
\]
whereas \eqref{eq:boundary-insertion-plus} gives $a_m\ge4\sum_{u=0}^{r}a_u+3a_{r+1}$.  The difference of these two bounds is $3a_{r+1}-2a_{r-1}$.  The product formula for $H$ gives $a_{u+2}\ge a_u$, $a_0=1$, $a_1=0$, $a_2=a_3=4$, and $a_u>4$ for $u\ge4$.  Thus its strict positivity for $r\ge1$ follows in the three cases
\begin{equation}\label{eq:boundary-residue-four-strict}
 3a_{r+1}-2a_{r-1}\ge
 \begin{cases}
 a_0=1,&r=1,\\
 3a_3>0,&r=2,\\
 a_{r-1}>0,&r\ge3.
 \end{cases}
\end{equation}
For $r=0$ the coefficient itself is $a_{n+1}-4$, which vanishes only when $n=2$.  For $m\le n$ no subtraction occurs.  These observations give exactly the stated zero coefficients.

To treat residue zero, first apply the same one-layer tail expansion to $Q_n$ itself.  Write $U=G^2$, $V_0=GH$, $W_0=H^2$.  Its five residue components, restricted to degrees below $10n$, are
\begin{equation}\label{eq:boundary-unsquared}
\begin{aligned}
 A_0&=U-\frac{X^{n+1}(V_0+W_0)}{1-X},&
 A_1&=-V_0+\frac{X^n(U-XW_0)}{1-X},\\
 A_2&=-W_0+\frac{X^{n+1}GK}{1-X},&
 A_3&=A_4=-X^n\mathscr C.
\end{aligned}
\end{equation}
Using $G=H+XK$, the comparison needed for $A_1$ has the explicit nonnegative decomposition
\begin{equation}\label{eq:boundary-unsquared-positive}
\begin{aligned}
 (1+X^2)GH-G^2+XH^2
 &=XH(H-K)+X^2(H^2-K^2)+X^3HK\\
 &\succeq0,
\end{aligned}
\end{equation}
since $H\succeq K$.  The two-row versions of \eqref{eq:boundary-insertion}--\eqref{eq:boundary-insertion-plus}, together with
\[
 V_0+W_0\preceq2U,\qquad
 \frac{U-XW_0}{1-X}\preceq\frac{1+X^2}{1-X}V_0,\qquad
 GK\preceq(1+X)W_0,
\]
show that $A_0$ is strictly positive and the other four components are nonpositive in this range.  In the endpoint case involving insertion of a part one, only the $G$ row is used.  In the convolution for $c_{5m}^{(2)}(n)$, the two residues are either both zero or both nonzero.  Thus every summand is nonnegative, and the terms involving the constant coefficient make the sum strictly positive.  This completes the proof.
\end{proof}

\subsection{A local product formula and the complementary arcs}

We first establish a variance bound used in both outer ranges.
\begin{lemma}\label{lem:boundary-variance}
For every $t>0$,
\begin{equation}\label{eq:boundary-variance}
 0<t^3V(t)<2c.
\end{equation}
Consequently, if $\nu=2n$ and $k=\nu c/t$, then $\nu V(t)t^2<2k$.
\end{lemma}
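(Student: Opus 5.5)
The plan is to write \(t^3V(t)\) as a truncated integral of a positive function and to evaluate the untruncated integral in closed form. Positivity will be immediate, and the upper bound will reduce to the Euler value \(\zeta(2)=\pi^2/6\).

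\emph{Rescaling the variance representation.} For \(u\ge0\), let \(\sigma^2(u)\) denote the variance of \(a\in\{0,1,2,3,4\}\) under weights proportional to \(\e^{-ua}\). Equivalently, \(\sigma^2=f''\) with
\[
 f(u)=\log\sum_{a=0}^4\e^{-ua}=\log(1-\e^{-5u})-\log(1-\e^{-u}).
\]
By the convexity remark in Section~\ref{sec:common}, \(V(t)=\int_0^1 25x^2\sigma^2(5tx)\,dx\). The substitution \(u=5tx\) then gives
\[
 t^3V(t)=\frac15\int_0^{5t}u^2\sigma^2(u)\,du .
\]
The integrand is strictly positive for \(u>0\), because the distribution is nondegenerate. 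This proves \(t^3V(t)>0\). It also shows that \(t^3V(t)\) is strictly smaller than \(\frac15\int_0^\infty u^2\sigma^2(u)\,du\), since the omitted tail integrand is positive.

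\emph{Evaluating the full integral.} I would integrate by parts twice. This requires the boundary behaviour of \(f\): near \(0\), \(f\to\log5\) and \(f'\) is bounded; near \(\infty\), both \(f\) and \(f'\) decay exponentially. Hence \(u^2f'\) and \(uf\) vanish at both ends, and
\[
 \int_0^\infty u^2f''(u)\,du=-2\int_0^\infty uf'(u)\,du=2\int_0^\infty f(u)\,du .
\]
Next I would use \(\int_0^\infty\log(1-\e^{-\lambda u})\,du=-\zeta(2)/\lambda\). This gives
\[
 2\int_0^\infty f(u)\,du=2\zeta(2)\Bigl(1-\frac15\Bigr)=\frac{4\pi^2}{15}=10c .
\]
Dividing by \(5\) yields the strict bound \(t^3V(t)<2c\). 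The same constant is consistent with the term \(c/z\) in \eqref{comm:dilog}.

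\emph{The consequence.} Multiplying the bound by \(\nu/t=2n/t\) gives \(\nu V(t)t^2<2c\nu/t=2k\).

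The only point needing care is justifying the boundary terms in the integration by parts, and that is routine given the explicit form of \(f\). I expect no real obstacle in this argument.
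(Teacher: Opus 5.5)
Your proof is correct and follows the paper's argument: the same rescaled variance integral gives positivity and strict increase of $t^3V(t)$, so the bound reduces to identifying the limit $2c$. The only difference is that the paper reads this limit off the exact identity $t^3V(t)=2c+t^2\mathcal A''(t)-2t\mathcal A'(t)+2\mathcal A(t)$, obtained from the dilogarithm form \eqref{comm:dilog} of $F$, whereas you compute $\frac15\int_0^\infty u^2\sigma^2(u)\,du$ directly by two integrations by parts and $\zeta(2)=\pi^2/6$, which is essentially the same calculation carried out at $t=\infty$.
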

\begin{proof}
Let $\kappa_2(x)$ be the variance of the distribution on $\{0,1,2,3,4\}$ whose probabilities are proportional to $1,x,x^2,x^3,x^4$.  For $x>0$ this variance is strictly positive.  Differentiation of \eqref{eq:radial}, followed by $w=tu$, gives
\begin{equation}\label{eq:boundary-variance-integral}
 V(t)=25\int_0^1u^2\kappa_2(\e^{-5tu})\,du,\qquad
 t^3V(t)=25\int_0^t w^2\kappa_2(\e^{-5w})\,dw.
\end{equation}
The latter expression is positive and strictly increasing in $t$.  To identify its limit, write
\[
 F(t)=\frac{c+\mathcal A(t)}t,\qquad
 \mathcal A(t)=\frac{\Li_2(\e^{-25t})}{25}
              -\frac{\Li_2(\e^{-5t})}{5}.
\]
The function $\mathcal A$ and its first two derivatives decay exponentially as $t\to\infty$.  Hence
\begin{equation}\label{eq:boundary-variance-limit}
 t^3V(t)=2c+t^2\mathcal A''(t)-2t\mathcal A'(t)+2\mathcal A(t)
 \longrightarrow2c.
\end{equation}
Strict increase to this limit proves \eqref{eq:boundary-variance}, and multiplication by $\nu/t$ proves the last assertion.
\end{proof}

Put $z=t-\ii v$, $\delta=t/n$, $N=n/t$, and $x=\e^{-5t}$.  Let
\begin{equation}\label{eq:boundary-lambda}
 \lambda_h(z)=\e^{e_h(z)}
 =\omega_h\prod_{s=1}^4(1-\zeta^{-hs}\e^{-5z})^{1/2-s/5},
 \qquad(\omega_1,\omega_2,\omega_3,\omega_4)=(-\zeta^2,1,1,-\zeta^3).
\end{equation}
The branches in the product are fixed at $\e^{-5z}=0$.  Pairing $s$ with $5-s$ verifies the equality with the endpoint amplitude defined earlier.  In particular $|\lambda_h(t)|=1$ and $\Gamma_{2,j}(z)=\sum_{h=1}^4\zeta^{-hj}\lambda_h(z)^2$.

The eta transformation \cite[Lemma 6.1, (6.2)]{Ma}, valid for $\Rea z>0$, reads
\begin{equation}\label{eq:boundary-eta}
 Q_\infty(\zeta^h\e^{-z/n})
 =\omega_h\e^{cn/z-z/(6n)}
 \frac{P(\zeta^{-h^{-1}}\e^{-6cn/z})}{P(\e^{-30cn/z})},
 \qquad P(w)=(w;w)_\infty.
\end{equation}
The inverse $h^{-1}$ is taken modulo five.  In this subsection assume $t\ge1/2$, $|v|\le2t$, and $\delta\le1/100$.  Define the upper endpoint correction by
\begin{equation}\label{eq:boundary-endpoint-correction}
 b_h(z)=\frac{5z}{2}\sum_{a=1}^4\mathsf B_2(a/5)
 \frac{\zeta^{ha}\e^{-5z}}{1-\zeta^{ha}\e^{-5z}}
 =E_h(z)+\frac z6.
\end{equation}
For the last equality, pairing $a$ with $5-a$ gives
\begin{equation}\label{eq:boundary-endpoint-identity}
 \sum_{a=1}^4\mathsf B_2(a/5)\frac{\zeta^{ha}}{1-\zeta^{ha}}=\frac1{15}.
\end{equation}
Indeed, the four Bernoulli values are $(1,-11,-11,1)/150$, and $w/(1-w)+w^{-1}/(1-w^{-1})=-1$.  Thus \eqref{eq:boundary-endpoint-correction} has the same sign convention as \eqref{eq:endpoint-functions}.

Applying shifted Euler--Maclaurin summation to the tail quotient, through its second endpoint term, gives the logarithmic identity
\begin{equation}\label{eq:boundary-log-product}
 \begin{aligned}
 \log Q_n(\zeta^h\e^{-z/n})
 &=nF(z)-\frac{z}{6n}+\log\lambda_h(z)
   +\frac{b_h(z)}n+R_{h,n}(z)+\rho_{h,n}^{\eta}(z),\\
 \rho_{h,n}^{\eta}(z)
 &=\log P(\zeta^{-h^{-1}}\e^{-6cn/z})-\log P(\e^{-30cn/z}),\\
 |\rho_{h,n}^{\eta}(z)|&\le2u_*,\qquad
 u_* =\e^{-6cn\Rea(1/z)}.
 \end{aligned}
\end{equation}
Here $R_{h,n}$ contains the higher endpoint terms and the integral remainder in the tail summation, while $\rho_{h,n}^{\eta}$ is the logarithm of the transformed product quotient in \eqref{eq:boundary-eta}.  The endpoint term enters as $b_h/n$, and its bounds, together with those for the tail remainder, are
\begin{equation}\label{eq:boundary-product-remainder}
\begin{aligned}
 |b_h(z)|&\le\frac{2|z|x}{5(1-x)},\\
 |R_{h,n}(z)|&\le\frac{5x}{6n^2}
 \left(\frac{|z|^2}{(1-x)^2}
       +\frac{|z|^3(1+x)}{t(1-x)^3}\right)
 <\frac{4|z|^2x}{n^2}.
\end{aligned}
\end{equation}
Since $x<1/12$, $|z|/t\le\sqrt5$, and $N\ge100$, the sum $b_h/n+R_{h,n}+\rho_{h,n}^{\eta}$ has modulus less than $1/10$.  Exponentiating \eqref{eq:boundary-log-product} and using $|\e^w-1|\le2|w|$ for $|w|\le1/10$, we obtain
\begin{equation}\label{eq:boundary-product}
 Q_n(\zeta^h\e^{-z/n})
 =\e^{nF(z)-z/(6n)}\lambda_h(z)(1+\epsilon_h),\qquad
 |\epsilon_h|\le x\left(\frac{|z|}{n}+\frac{8|z|^2}{n^2}\right)+4u_* .
\end{equation}
In deriving the coefficient one in this bound, we used $4/[5(1-x)]<1$.  This proves the local estimate from \cite[proof of Lemma 6.3, (6.10)--(6.11)]{Ma} on the present parameter range.

On the complement of the four arcs of angular radius $2\delta$, the infinite-product bound \cite[Lemma 8.1, (8.1)]{Ma} gives
\begin{equation}\label{eq:boundary-infinite-tail}
 |Q_\infty(\e^{-\delta+\ii\theta})|
 \le2\e^{c/(4\delta)-\delta/6}\qquad(0<\delta\le1/100).
\end{equation}
This bound is uniform on the entire complementary contour.  For the finite tail quotient $T_n=Q_n/Q_\infty$, elementary geometric summation yields
\[
 \log T_n(\e^{-\delta})\le\frac{21Nx}{25(1-x)},\qquad
 cN-nF(t)\le\frac{Nx}{5(1-x)}.
\]
The first inequality follows from $1/(1-\e^{-5\delta})\le1+1/(5\delta)$.  Their combined cost is $26Nx/[25(1-x)]$.  Subtracting it from $3cN/4$ gives
\begin{equation}\label{eq:boundary-finite-tail}
 |Q_n(\e^{-\delta+\ii\theta})|
 \le2\e^{nF(t)-\delta/6-bN},\qquad
 b=\begin{cases}c/3,&1/2\le t<2,\\37c/50,&t\ge2.\end{cases}
\end{equation}
Indeed, $26x/[25(1-x)]<5c/12$ in the first range and is less than $c/100$ in the second.  This derivation preserves the constant two in \eqref{eq:boundary-infinite-tail}.

For coefficient extraction it is convenient to use the positive normalization
\begin{equation}\label{eq:boundary-normalization}
 \mathfrak B_2(t,n,d)=
 \frac{\e^{2nF(t)+dt/n-t/(3n)}}{\sqrt{4\pi n^3V(t)}},
 \qquad d=-2n^2F'(t).
\end{equation}

\subsection{The square: the unbounded saddle range}

\begin{proposition}\label{prop:boundary-edge}
If $t\ge2$, $n/t\ge100$, and $d=-2n^2F'(t)$ is an integer, then
\begin{equation}\label{eq:boundary-edge-sign}
 s_{d\bmod5}\frac{c_d^{(2)}(n)}{\mathfrak B_2(t,n,d)}>\frac3{10}.
\end{equation}
\end{proposition}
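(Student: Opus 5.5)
We extract the coefficient by Cauchy's formula on the circle $|q|=\e^{-\delta}$, $\delta=t/n$, and split the circle into the four arcs $\theta=2\pi h/5+v/n$ with $|v|\le2t$ (that is, angular radius $2\delta$) and their complement:
\[
 c_d^{(2)}(n)=\frac{\e^{dt/n}}{2\pi}\int_{-\pi}^{\pi}Q_n(\e^{-\delta+\ii\theta})^2\e^{-\ii d\theta}\,d\theta .
\]
On each primitive arc we use the local product formula \eqref{eq:boundary-product}, which applies because $t\ge2\ge1/2$, $|v|\le2t$, and $\delta=t/n\le1/100$. Squaring it gives
\[
 Q_n^2=\e^{2nF(z)-z/(3n)}\lambda_h(z)^2(1+\epsilon_h)^2 .
\]
After the substitution $\theta=2\pi h/5+v/n$, the phase $\e^{-\ii d\theta}$ contributes $\zeta^{-hd}=\zeta^{-hj}$. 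The saddle relation $d=-2n^2F'(t)$ cancels the linear term in $v$. Summing over $h$ therefore produces the integrand
\[
 \e^{2n\{F(t-\ii v)-F(t)+\ii vF'(t)\}}\,\e^{\ii v/(3n)}\sum_h\zeta^{-hj}\lambda_h(t-\ii v)^2(1+\epsilon_h)^2 ,
\]
multiplied by $\e^{2nF(t)+dt/n-t/(3n)}/(2\pi n)$. Comparing with the normalization $\mathfrak B_2$ reduces the statement to showing that
\[
 \sqrt{\frac{nV(t)}{\pi}}\int_{-2t}^{2t}(\cdots)\,dv
\]
is within $\Gamma_{2,j}$-type distance of a quantity with sign $s_j$ and modulus exceeding $3/10$.

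The key observation is that for $t\ge2$ we have $x=\e^{-5t}\le\e^{-10}$, so every $\lambda_h(z)$ is extremely close to the constant $\omega_h$. From \eqref{eq:boundary-lambda},
\[
 |\lambda_h(z)-\omega_h|\le Cx ,
\]
and $\Gamma_{2,j}(z)=\sum_h\zeta^{-hj}\omega_h^2+O(x)$. The values $\omega_h^2=(\zeta^4,1,1,\zeta^6)$ give
\[
 \sum_h\zeta^{-hj}\omega_h^2=\zeta^{4-j}+\zeta^{-2j}+\zeta^{-3j}+\zeta^{1-4j},
\]
and a direct check for $j=0,\dots,4$ shows that this limit has sign $s_j$ and modulus at least roughly $2\cos(2\pi/5)+\ldots$. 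This must be verified explicitly: the moduli are of the form $1\pm\sqrt5$ or similar, and in each case bounded below by about $0.38$. In any case the limiting value is real and bounded away from $0$ by a margin larger than $3/10$ plus the errors. So the main term equals this constant times the Gaussian-type integral. The phase $2n\{F(t-\ii v)-F(t)+\ii vF'(t)\}$ is handled as in Section~\ref{sec:compact}, but now in the scaling regime $N=n/t\ge100$. We rescale $v=tu$ and use Lemma~\ref{lem:boundary-variance}: for $t\ge2$, $F(t)\approx c/t$ up to $O(x)$, so the real part of the exponent is
\[
 2cN\,\Rea\Bigl(\frac{1}{1-\ii u}-1-\ii u\Bigr)+O(Nx),
\]
which has a Gaussian profile of width about $N^{-1/2}$. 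The normalized integral equals $1$ up to $O(1/N)+O(Nx\cdot\text{stuff})$. To obtain this, we bound the cubic phase by $M_3\sim6c/t^4$, or equivalently work with the explicit rational function $1/(1-\ii u)$. The factors $\epsilon_h$ and $\e^{\ii v/(3n)}$ contribute at most $O(x|z|/n)+4u_*$ and $O(t/n)$.

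The complementary contour is bounded by \eqref{eq:boundary-finite-tail} with $b=37c/50$:
\[
 |Q_n|^2\le4\e^{2nF(t)-2\delta/6-2bN} .
\]
After division by $\mathfrak B_2$, this gives at most
\[
 4\sqrt{4\pi n^3V}\,\e^{-2bN}\cdot n^{-1}\lesssim\sqrt{N^3\cdot 2c}\,\e^{-74cN/50},
\]
using $t^3V<2c$. With $N\ge100$ and $c\approx0.263$, this is $<10^{-15}$.

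The main obstacle will be making the central Gaussian estimate uniform in the unbounded parameter $t$ while only $N=n/t\ge100$ is controlled. The natural variable is $u=v/t$, with exponent $2cN$ times a fixed function of $u$ plus errors of size $Nx\le N\e^{-10}$. That error is not small when $N$ is huge relative to $\e^{10}$. I would instead keep the exact $F$ and use the bound $\nu V t^2<2k$ from Lemma~\ref{lem:boundary-variance} for the lower Gaussian width, together with Cauchy bounds for $F'''$ on the disk $|z-t|\le t$, derived from $F=(c+\mathcal A)/z$, so that all Taylor constants scale as powers of $1/t$. This yields relative error $O(1/N)$ independently of $t$. Summing the amplitude error ($\lesssim2x$), the Gaussian error ($\lesssim$ a few$/N\le0.05$), and the tail then leaves the margin $>3/10$.
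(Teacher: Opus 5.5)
Your architecture matches the paper's proof. It uses Cauchy's formula on $|q|=\e^{-t/n}$ and treats the four arcs $|v|\le2t$ with \eqref{eq:boundary-product}. The folded amplitude is replaced by $\gamma_{2,j}=\sum_h\zeta^{-hj}\omega_h^2$; your formula is right, and the critical value is $\gamma_{2,4}=(3-\sqrt5)/2\approx0.382$. The complement is bounded by \eqref{eq:boundary-finite-tail} with $b=37c/50$.

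The one substantial input is an explicit phase estimate for $J=\sqrt{nV/\pi}\int_{-2t}^{2t}\e^{\Phi(v)}\,dv$, uniform in $t\ge2$. The paper imports it from \cite[Lemma 7.3, (7.7)]{Ma}, obtaining $|J-1|<1/10$ and a normalized absolute integral below $11/10$ once $k=2cN>52$. Your plan for proving this yourself has a gap.

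First, a Taylor expansion at $v=0$ cannot cover the whole arc. In $u=v/t$ the main part of $\Phi$ is $2cN\{(1-\ii u)^{-1}-1-\ii u\}$, whose expansion has radius $1$. The quartic remainder bound $2cNu^4$ already cancels the quadratic decay $-2cNu^2$ at $|u|=1$, so a separate global decay estimate is needed on, say, $1/4\le|u|\le2$.

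Second, Cauchy bounds on disks lose too much for the explicit threshold $N\ge100$. The disk $|z-t|\le t$ reaches $z=0$, where $F(0)=\log5$ carries no $1/t$ scaling at all. On radius $t/2$ one gets about $96c/t^4$ for $|F'''|$ instead of $6c/t^4$, and this factor enters the cubic-phase error squared. The margin needs roughly $|J-1|<0.13$ at $N=100$, so ``relative error $O(1/N)$'' obtained this way does not close the argument.

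The comparison you discarded is the right repair. The $O(Nx)$ worry disappears once the constant and linear terms are removed. Writing $tF(tw)=c/w+\mathscr R_t(w)$, one has exactly
\[
 \Phi=2cN\{(1-\ii u)^{-1}-1-\ii u\}
 +2N\{\mathscr R_t(1-\ii u)-\mathscr R_t(1)+\ii u\mathscr R_t'(1)\}.
\]
By \eqref{eq:cubic-radial-errors}, the second term has modulus at most $Nu^2/500$. On $|u|\le2$ this is at most about $2\%$ of the real part $-2cNu^2/(1+u^2)$ of the first term, uniformly in $N$. This gives decay on the whole arc and sharp Gaussian constants from the rational kernel, exactly as the paper does for the cube in Lemma~\ref{lem:cubic-real-kernel}.

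Keep the final bookkeeping explicit, because the margin is thin. In your tally the dominant amplitude error is not $O(x)$ but $|\e^{\ii v/(3n)}-1|\le2/(3N)\le1/150$. This contributes up to $4/150$ times the absolute integral. The paper's margin is only $\frac38\cdot\frac9{10}-\frac{11}{10}\bigl(\frac4{150}+\frac7{20000}+\frac1{50000}\bigr)>0.3077$.

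Two smaller points. The Gaussian core needs a lower bound on $t^3V$; Lemma~\ref{lem:boundary-variance} gives only $t^3V<2c$, so use \eqref{eq:cubic-scaled-variance}. The complement contributes $4\sqrt{4\pi n^3V}\,\e^{-2bN}<11N^{3/2}\e^{-481N/1250}<10^{-10}$, without your extra factor $n^{-1}$; this is still negligible.
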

\begin{proof}
The limiting folded amplitudes are
\begin{equation}\label{eq:boundary-gamma-infinity}
 \gamma_{2,j}=\sum_{h=1}^4\zeta^{-hj}\omega_h^2,\qquad
 (\gamma_{2,0},\ldots,\gamma_{2,4})
 =\left(\frac{3+\sqrt5}{2},-(1+\sqrt5),-1,\sqrt5-1,
        \frac{3-\sqrt5}{2}\right).
\end{equation}
They satisfy $s_j\gamma_{2,j}=|\gamma_{2,j}|>3/8$ and $|\gamma_{2,j}|<4$.  Put
\begin{equation}\label{eq:boundary-phase}
 k=2cn/t,\qquad
 \Phi(v)=2n\{F(t-\ii v)-F(t)+\ii vF'(t)\},\qquad
 J=\sqrt{\frac{2nV}{2\pi}}\int_{-2t}^{2t}\e^{\Phi(v)}\,dv.
\end{equation}
The phase estimate \cite[Lemma 7.3, (7.7)]{Ma}, with effective exponent $2n$, implies
\begin{equation}\label{eq:boundary-phase-error}
 |J-1|\le\frac9{4k}
 +\frac{5\e^{-49k/850}+(23/10)\e^{-199k/3200}}{\sqrt k}.
\end{equation}
The corresponding normalized absolute integral is at most $\sqrt{15/14}+5\e^{-49k/850}/\sqrt k$.  The hypotheses of this phase estimate are $t\ge2$ and $k>0$.  Since $k>52$, these two bounds are respectively less than $1/10$ and $11/10$.  For example, the endpoint comparisons follow from
\[
 \e^{1274/425}>20,\qquad \e^{2587/800}>25,\qquad \sqrt{52}>7.
\]
Each exponential comparison follows by truncating the positive exponential series.

Throughout the four arcs, $x<1/20000$ and \eqref{eq:boundary-lambda} gives
\[
 |\lambda_h(z)^2|<\frac{101}{100},\qquad
 |\lambda_h(z)^2-\omega_h^2|<\frac{7x}{4}.
\]
Furthermore, \eqref{eq:boundary-product} gives $|\epsilon_h|<1/700000$, and $|\e^{\ii v/(3n)}-1|\le1/150$.  Using $|(1+\epsilon_h)^2-1|\le3|\epsilon_h|$, the error in replacing the folded amplitude by $\gamma_{2,j}$ is bounded by
\begin{equation}\label{eq:boundary-edge-amplitude}
 \frac4{150}+\frac7{20000}+\frac1{50000}.
\end{equation}
Lemma~\ref{lem:boundary-variance} gives $t^3V(t)<2c<201c/100$.  Together with \eqref{eq:boundary-finite-tail}, this bounds the complementary contour by
\begin{equation}\label{eq:boundary-edge-tail}
 4\sqrt{4\pi(201/100)c}\,N^{3/2}\e^{-37cN/25}
 <11N^{3/2}\e^{-481N/1250}<10^{-10}.
\end{equation}
The last expression decreases for $N\ge100$; the endpoint comparison follows from $\e^{962/25}>11000\cdot10^{10}$.  Combining the bounds gives the positive margin
\begin{equation}\label{eq:boundary-edge-margin}
 \frac38\frac9{10}
 -\frac{11}{10}\left(\frac4{150}+\frac7{20000}+\frac1{50000}\right)
 -10^{-10}
 =\frac{9232789997}{30000000000}>\frac3{10}.
\end{equation}
\end{proof}

\subsection{The square: the compact intermediate range}

\begin{proposition}\label{prop:boundary-middle}
If $n\ge301$, $1/2\le t\le2$, and $d=-2n^2F'(t)$ is an integer with $d\equiv j\pmod5$, then
\begin{equation}\label{eq:boundary-middle-result}
 s_j\Gamma_{2,j}(t)>\frac13,\qquad
 \left|\frac{c_d^{(2)}(n)}{\mathfrak B_2(t,n,d)\Gamma_{2,j}(t)}-1\right|<\frac3{10}.
\end{equation}
\end{proposition}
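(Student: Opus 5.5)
The plan is to follow the structure of Proposition~\ref{prop:boundary-edge}, keeping the $t$-dependent folded amplitude $\Gamma_{2,j}(t)$ in place of its limit $\gamma_{2,j}$.

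\textbf{Amplitude lower bound.} First I prove $s_j\Gamma_{2,j}(t)>1/3$ on $[1/2,2]$ from the factorization \eqref{comm:amplitude-factorization}:
\[
 \Gamma_{2,j}=4\cos\frac{2U+3\pi j}{5}\cos\frac{2W-\pi j}{5}.
\]
Here $U,W$ increase in $v=\tanh(5t/2)$. For $t\ge1/2$ one has $v\ge\tanh(5/4)>0.84$, so $U$ and $W$ lie in short intervals below $\pi/2$. On these intervals each cosine factor has a fixed sign and is bounded away from zero. Since $t_{2,3},t_{2,4}<0.2$, all five residues are far from their zeros. A monotonicity argument in $U$ and $W$ separately, evaluated at the two endpoints of $[1/2,2]$, should give the margin $1/3$. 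For example, $\Gamma_{2,4}\to(3-\sqrt5)/2\approx0.38$ as $t\to\infty$, so the worst case is near $t=2$. I expect this to be the tightest margin and will check it with explicit enclosures.

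\textbf{Core integral.} On the four arcs $|v|\le2t$ (note $\delta=t/n\le2/301<1/100$), I insert the local product formula \eqref{eq:boundary-product}, squared. The normalized coefficient becomes
\[
 \sqrt{\tfrac{2nV}{2\pi}}\int_{-2t}^{2t}\e^{\Phi(v)}\,\e^{\ii v/(3n)}\sum_h\zeta^{-hj}\lambda_h(t-\ii v)^2(1+\epsilon_h)^2\,dv .
\]
I replace $\lambda_h(t-\ii v)^2$ by $\lambda_h(t)^2$. The difference is bounded by $|v|$ times a bound for the derivative. Since $x=\e^{-5t}\le\e^{-5/2}$ is small, this derivative is $O(x)$, so the difference is $\lesssim x|v|$ after explicit differentiation of \eqref{eq:boundary-lambda}.

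I then use the normalized absolute moments $\int|\e^{\Phi}||v|\,dv$. These come from the quadratic bound $\Rea\Phi\le-\eta nv^2$ given by the Taylor/phase estimates, or from Ma's phase lemma adapted to $1/2\le t\le2$ via \cite[Lemma 7.3]{Ma}. They give a contribution $O(x/\sqrt{nV})$, which is tiny for $n\ge301$.

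The remaining pieces are controlled as follows:
\begin{itemize}
\item $J-1$ is estimated as in \eqref{eq:boundary-phase-error}. Now $k=4cn/t\ge2cn\cdot 301/301$ is huge, so $|J-1|$ is very small.
\item The terms $\epsilon_h$ and $\e^{\ii v/(3n)}-1$ are $O(1/n)$ uniformly on the arcs.
\end{itemize}
Collecting these gives $c_d/\mathfrak B_2=\Gamma_{2,j}(t)+\mathrm{err}$ with $|\mathrm{err}|$ well below $0.1$.

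\textbf{Complementary contour.} The complementary contour is controlled by \eqref{eq:boundary-finite-tail} with $b=c/3$. Using Lemma~\ref{lem:boundary-variance} for the $V$-normalization, its contribution is at most
\[
 C\,N^{3/2}\e^{-2cN/3}, \qquad N=n/t\ge150,
\]
which is negligible.

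\textbf{Conclusion and main obstacle.} Dividing the error by $|\Gamma_{2,j}|>1/3$ gives a relative error below $3/10$, which is what the statement requires. The main obstacle is quantitative uniformity over $t\in[1/2,2]$. On this range the phase lemma of Ma is stated for $t\ge2$, so I must either rederive a Gaussian bound for $\Rea\Phi$ on $|v|\le2t$, or cover the compact range by finitely many cells with interval bounds as in Section~\ref{sec:compact}. I would try the cell approach first, reusing the rectangle machinery and the Euler--Maclaurin bounds.
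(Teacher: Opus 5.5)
Your skeleton matches the paper's. You treat the four principal arcs $|v|\le2t$ with the local product formula \eqref{eq:boundary-product}, the rest of the circle with \eqref{eq:boundary-finite-tail} and $b=c/3$, and the amplitude with \eqref{comm:amplitude-factorization}. The gap is in the error analysis on the arcs, which is the whole content of this proposition. The terms you call tiny are exactly where the $3/10$ margin is spent.

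\textbf{Size of the arc errors.} At $t=1/2$, the quantity $x=\e^{-5/2}\approx0.082$ is not small. The uniform bound for $|\partial_v\lambda_h(t-\ii v)^2|$ on the vertical line is $\ell L_1$, with $L_1=8x/(1-x)\approx0.72$ and $\ell\approx1.15$. Summed over the four arcs this is about $3.3$. With $V(1/2)\approx2.36$ and $\nu=2n=602$, the Gaussian first absolute moment is $2/\sqrt{2\pi\nu V}\approx0.021$. Your unpaired replacement of $\lambda_h(t-\ii v)^2$ by $\lambda_h(t)^2$ therefore costs about $0.07$ in absolute terms. That is roughly $0.20$ relative to $\Gamma_{2,4}(1/2)\approx0.343$. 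Bounding the odd cubic phase absolutely, with $M_3$ from \eqref{eq:boundary-derivatives}, adds another relative error of about $0.12$. These two $n^{-1/2}$ terms alone already exceed $3/10$. Nor can you make $|J-1|$ small by quoting \eqref{eq:boundary-phase-error}: that is Ma's estimate for $t\ge2$, and the large size of $k$ does not transfer to $[1/2,2]$. For comparison, the paper's complete bound on the first cell $[1/2,501/1000]$ is about $0.274$. Your target of ``absolute error well below $0.1$'' is not supported.

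\textbf{Missing ingredients.}
(i) Restrict the Taylor expansion to $|v|\le t/4$ and pair $v$ with $-v$ there. Since $\mathscr H_{2,j}'(0)$ in \eqref{eq:boundary-H} is purely imaginary and $\Ima\Phi$ is odd, the $n^{-1/2}$ contributions cancel. Only the four $O(1/\nu)$ moment terms of \eqref{eq:boundary-middle-inner} remain.
(ii) On $t/4\le|v|\le2t$ no Taylor-based bound $\Rea\Phi\le-\eta nv^2$ is available, because the quartic remainder dominates once $|v|$ is comparable to $t$. This is the obstacle you flag but do not resolve. On $[1/2,1]$ the paper uses the phase-loss inequalities from the proof of Ma's Lemma~7.2. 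On $[1,2]$ it writes $F=c/z+(F-c/z)$ and uses $\xi=1-5t^3D_2(t)/(2c)>0$.
(iii) All quantities are bounded on $1500$ cells at $n=301$, with monotonicity in $n$.

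\textbf{Amplitude step.} This also needs repair. The minimum of $s_j\Gamma_{2,j}$ on $[1/2,2]$ occurs at $t=1/2$, $j=4$, with value about $0.343$, not near $t=2$. The two cosine factors of $\Gamma_{2,4}$ move in opposite directions as $t$ grows. Bounding each by its worst endpoint value gives only $4\,|\cos\frac{2U(1/2)+2\pi}{5}|\cos\frac{2\pi}{5}\approx0.30<\frac13$. So the $t$-range must be subdivided here as well.
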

\begin{proof}
We give explicit bounds for each part of the integral.  Set $\nu=2n$ and
\begin{equation}\label{eq:boundary-middle-amplitude}
 \ell=\exp\frac{8x}{5(1-x)},\quad
 L_1=\frac{8x}{1-x},\quad L_2=\frac{40x}{(1-x)^2},\quad
 g=\min_{0\le j\le4}|\Gamma_{2,j}(t)|.
\end{equation}
On the vertical line through $t$, the modulus and first two derivatives of $\lambda_h^2$ are bounded by $\ell$, $\ell L_1$, and $\ell(L_1^2+L_2)$.  Moreover,
\begin{equation}\label{eq:boundary-H}
 \mathscr H_{2,j}(v)=\e^{\ii v/(3n)}\frac{\Gamma_{2,j}(t-\ii v)}{\Gamma_{2,j}(t)},\qquad
 |\mathscr H_{2,j}(v)|<4,\quad \mathscr H_{2,j}(0)=1,
\end{equation}
and the following are bounds for $|\mathscr H_{2,j}'(0)|$ and $\sup|\mathscr H_{2,j}''(v)|$:
\begin{equation}\label{eq:boundary-h12}
 h_1=\frac{4\ell L_1}{g}+\frac1{3n},\qquad
 h_2=\frac{4\ell}{g}
 \left(L_1^2+L_2+\frac{2L_1}{3n}+\frac1{9n^2}\right).
\end{equation}
The bound in \eqref{eq:boundary-H} follows from $(|\gamma_{2,j}|+4(\ell-1))/|\Gamma_{2,j}(t)|<4$.

Derivative bounds follow directly from the dilogarithmic expression for $F$.  For $r\ge0$ define
\begin{equation}\label{eq:boundary-derivatives}
 D_r(t)=\sum_{a\in\{5,25\}}\sum_{i=0}^r
 \binom ri\frac{(r-i)!a^i\Li_{2-i}(\e^{-at})}{a t^{r-i+1}},\qquad
 M_3=\frac{6c}{t^4}+D_3,\quad M_4=\frac{24c}{t^5}+D_4.
\end{equation}
Termwise absolute values give $|(F-c/z)^{(r)}(t-\ii v)|\le D_r(t)$ and hence $|F'''|\le M_3$, $|F''''|\le M_4$.

On $|v|\le t/4$, put $U=v\sqrt{\nu V}$ and write $\Phi=-U^2/2+R+\ii\Psi$.  Taylor's formula gives
\[
 |R|\le\frac{M_4U^4}{24\nu V^2},\qquad
 |\Psi|\le\frac{M_3|U|^3}{6\sqrt\nu V^{3/2}}.
\]
Set $\eta=M_4(t/4)^2/(24V)$ and $b=1-2\eta>0$.  Pairing $v$ and $-v$, and using that $\mathscr H_{2,j}'(0)$ is purely imaginary, bounds the inner relative error by
\begin{equation}\label{eq:boundary-middle-inner}
 E_{\rm in}=\frac1\nu\left\{
 \frac{M_4}{8V^2}b^{-5/2}+\frac{5M_3^2}{24V^3}b^{-7/2}
 +\frac{h_1M_3}{2V^2}b^{-5/2}+\frac{h_2}{2V}b^{-3/2}\right\}.
\end{equation}
This is the fourth, sixth, fourth, and second moment of the envelope $\e^{-bU^2/2}$, respectively.  The omitted Gaussian tails contribute at most
\begin{equation}\label{eq:boundary-middle-gaussian}
 E_{\rm G}=\frac{2\e^{-U_0^2/2}}{\sqrt{2\pi}U_0},\qquad
 U_0=\frac{t\sqrt{\nu V}}4.
\end{equation}

For $1/2\le t\le1$, use the phase inequalities from the proof of \cite[Lemma 7.2]{Ma}, with $y=v/t$ and $k=2cn/t$:
\[
 \Rea\Phi\le-\frac25k\frac{y^2}{1+y^2}\quad(|y|\le1/2),\qquad
 \Rea\Phi\le-\frac{k}{20}\quad(1/2\le|y|\le2).
\]
Integrating these positive envelopes on $1/4\le|y|\le2$, using $\nu Vt^2<2k<3k$ from Lemma~\ref{lem:boundary-variance} and $|\mathscr H_{2,j}|<4$, gives
\begin{equation}\label{eq:boundary-middle-outer-one}
 E_{\rm out}=\frac{16}{7}
 \left(\frac{28\e^{-2k/85}}{\sqrt k}+4\sqrt k\,\e^{-k/20}\right).
\end{equation}
Only the displayed phase inequalities are needed here; they follow from the derivative bounds for $F-c/z$ independently of a lower threshold for $n$.

For $1\le t\le2$, a sharper estimate follows by writing $F=c/z+(F-c/z)$ and applying the second derivative bound in \eqref{eq:boundary-derivatives}.  With
\[
 \xi=1-\frac{5t^3D_2(t)}{2c}>0,\qquad a=\frac{4\xi k}{17},
\]
one obtains $\Rea\Phi\le-\xi k y^2/(1+y^2)$ on $|y|\le2$.  Since $y^2/(1+y^2)\ge4|y|/17$ for $1/4\le|y|\le2$, one may take
\begin{equation}\label{eq:boundary-middle-outer-two}
 E_{\rm out}=8t\sqrt{\frac{\nu V}{2\pi}}\frac{\e^{-a/4}}a.
\end{equation}
The normalized unweighted absolute integral is at most $b^{-1/2}+E_{\rm out}/4$.  Therefore \eqref{eq:boundary-product} and \eqref{eq:boundary-finite-tail} give the two remaining error terms
\begin{equation}\label{eq:boundary-middle-product-tail}
\begin{aligned}
 \epsilon&=x\left(\frac{\sqrt5}{N}+\frac{40}{N^2}\right)
             +4\e^{-6cN/5},\\
 E_{\rm prod}&=\frac{12\ell\epsilon}{g}
             \left(b^{-1/2}+\frac{E_{\rm out}}4\right),\qquad
 E_{\rm far}=\frac{4\sqrt{4\pi n^3V}}g\e^{-2cN/3}.
\end{aligned}
\end{equation}

For completeness, the numerical margins used in these explicit inequalities are recorded in Table~\ref{tab:boundary-middle}.  On each closed interval $[i/1000,(i+1)/1000]$, the expressions above are bounded at $n=301$, using the exact rational endpoints.  In $D_r$, the terms of orders one and below are logarithms or rational functions, and
\begin{equation}\label{eq:boundary-li2-tail}
 0\le\Li_2(u)-\sum_{k=1}^{60}\frac{u^k}{k^2}
 \le\frac{u^{61}}{61^2(1-u)}\qquad(0\le u<1).
\end{equation}
The amplitude is evaluated through its real trigonometric factorization.  These bounds yield $g>1/3$, $b>0$, and the bounds for $\mathscr H_{2,j}$ above on every interval.  All remaining dependence on $n$ is through decreasing negative powers or terms $n^a\e^{-bn}$; in the latter case $bn>a$ holds already at $n=301$.  Thus the same estimates hold for every larger $n$.
\begin{table}[htbp]
\centering
\begin{tabular}{c r c c}
\hline
$t$ & $\#\text{ intervals}$ & $g$ & $E_{\rm in}+E_{\rm out}+E_{\rm G}+E_{\rm prod}+E_{\rm far}$\\
\hline
$[1/2,1]$ & $500$ & $>0.343005$ & $<0.273978$\\
$[1,2]$ & $1000$ & $>0.379331$ & $<0.060861$\\
\hline
\end{tabular}
\caption{Uniform margins on the compact intermediate range.}\label{tab:boundary-middle}
\end{table}
The sum in the last column is bounded on each interval before the maximum is taken.  For example, on $[1/2,501/1000]$ the five terms are bounded above by
\[
 0.257636,\qquad0.002134,\qquad0.000002770,\qquad
 0.014206,\qquad10^{-21},
\]
whose sum is less than $0.273979$.  The formulas and the interval endpoints specify all estimates in the table; the detailed interval bounds are included in the supplementary material.  Since the total is less than $3/10$, coefficient extraction proves \eqref{eq:boundary-middle-result}.
\end{proof}

\subsection{The square: joining the boundary ranges}

The dilogarithmic formula, or an integration by parts, gives
\begin{equation}\label{eq:boundary-degree-interface}
 -t^2F'(t)<c,\qquad d<2cN^2,\qquad
 2c\cdot100^2<5300.
\end{equation}
Thus a saddle with $t\ge2$ and $d\ge5300$ satisfies the hypotheses of Proposition~\ref{prop:boundary-edge}.  If $d<5300$ and $n\ge530$, Lemma~\ref{lem:boundary-linear} applies.  The only remaining edge degrees lie in the finite set
\begin{equation}\label{eq:boundary-finite-interface}
 1\le n\le529,\qquad
 0\le d\le\min\{5299,\lfloor2n^2/15\rfloor\},
\end{equation}
because $t\ge2$ implies $d<(c/2)n^2<(2/15)n^2$.  This set is included in Lemma~\ref{lem:finite}, whose exact recurrence is independent of the present estimates.

\begin{proposition}\label{prop:boundary-square}
Let $n\ge301$, and suppose the ordinary saddle associated with $d$ satisfies $t\ge1/2$.  Then $s_{d\bmod5}c_d^{(2)}(n)>0$ whenever $d\ge5300$.  If $n\ge530$, the same sign conclusion, allowing the exceptional zero $d=9$, holds for every such degree.  For $301\le n\le529$, the degrees not covered by these assertions are contained in \eqref{eq:boundary-finite-interface}.
\end{proposition}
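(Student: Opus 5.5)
This is a joining argument: split the saddle range $t\ge1/2$ at $t=2$ and apply Proposition~\ref{prop:boundary-middle}, Proposition~\ref{prop:boundary-edge} and Lemma~\ref{lem:boundary-linear}.

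\emph{Intermediate range.} Fix $n\ge301$ and a degree $d$ whose ordinary saddle has $1/2\le t\le2$. Proposition~\ref{prop:boundary-middle} gives $s_j\Gamma_{2,j}(t)>1/3>0$ and
\[
 \left|\frac{c_d^{(2)}(n)}{\mathfrak B_2\Gamma_{2,j}}-1\right|<\frac3{10},
\]
so $c_d^{(2)}(n)/(\mathfrak B_2\Gamma_{2,j})>0$. Since $\mathfrak B_2>0$, the sign of $c_d^{(2)}(n)$ is that of $\Gamma_{2,j}(t)$, namely $s_j$. This gives $s_{d\bmod5}c_d^{(2)}(n)>0$ for every such $d$, with no restriction on $d$.

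\emph{Unbounded range, large degree.} Now let $t\ge2$. By \eqref{eq:boundary-degree-interface}, $d<2cN^2$ with $N=n/t$. If $d\ge5300$, then $2cN^2>5300>2c\cdot100^2$, so $N>100$. Proposition~\ref{prop:boundary-edge} then applies and gives $s_{d\bmod5}c_d^{(2)}(n)>\tfrac3{10}\mathfrak B_2>0$. Together with the intermediate range, this proves the first assertion.

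\emph{Unbounded range, small degree.} Keep $t\ge2$ and suppose $d<5300$. If $n\ge530$, then $d<5300\le10n$. Lemma~\ref{lem:boundary-linear} gives $s_{d\bmod5}c_d^{(2)}(n)\ge0$, with equality only at $d=9$; the second exception $d=19$ occurs only for $n=2$. This proves the second assertion, allowing the zero at $d=9$. One should note here that the intermediate range contributes no exceptions at all.

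\emph{Remaining degrees.} Suppose $301\le n\le529$ and $d$ is not yet covered. Then $t\ge2$ and $d\le5299$. Since $-t^2F'(t)<c$, we have $d=-2n^2F'(t)<2cn^2/t^2\le cn^2/2<2n^2/15$. Hence $d\le\min\{5299,\lfloor 2n^2/15\rfloor\}$, which is the set \eqref{eq:boundary-finite-interface}.

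The only delicate point is that $d$ determines $t$ through $d=-2n^2F'(t)$, so the strict inequalities must be chained in the right direction. In particular $d<2cN^2$ with $d\ge5300$ must be used to obtain $N\ge100$ itself, not merely $N>99$; the strict inequality $5300>2c\cdot100^2$ supplies exactly this margin.
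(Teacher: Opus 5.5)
Your proposal is correct and is essentially the paper's own joining argument. It takes the middle range $1/2\le t\le2$ from Proposition~\ref{prop:boundary-middle}, the edge $t\ge2$, $d\ge5300$ from Proposition~\ref{prop:boundary-edge} via $d<2cN^2$ and $2c\cdot100^2<5300$, the case $d<5300\le10n$ from Lemma~\ref{lem:boundary-linear}, and the residual finite set from $d<(c/2)n^2<(2/15)n^2$.
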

\begin{proof}
Combine Propositions~\ref{prop:boundary-middle} and \ref{prop:boundary-edge} with Lemma~\ref{lem:boundary-linear} and \eqref{eq:boundary-degree-interface}.
\end{proof}

\subsection{The cube: exact cancellation and the first tail layer}

The cubic boundary has a different origin.  Put

\begin{equation}\label{eq:cubic-infinite}
 S(q)=Q_\infty(q)^3
 =\frac{(q;q)_\infty^3}{(q^5;q^5)_\infty^3}.
\end{equation}

Jacobi's identity, in the same formal-series convention as \eqref{eq:boundary-dissection}, gives

\begin{equation}\label{eq:cubic-jacobi}
 (q;q)_\infty^3=\sum_{\ell\ge0}(-1)^\ell(2\ell+1)
 q^{\ell(\ell+1)/2},\qquad
 [q^{5m+2}]S=[q^{5m+4}]S=0.
\end{equation}

The second assertion follows because triangular numbers have residues only zero, one and three modulo five; it is also the modulus-five case of \cite[Section 3, Lemma 3.1]{LW}.  To compute the finite tail explicitly, we cube \eqref{eq:boundary-dissection} and collect powers modulo five.  This algebraic expansion gives

\begin{equation}\label{eq:cubic-dissection}
\begin{aligned}
 S(q)&=A(X)-qB(X)+q^3D(X),\qquad X=q^5,\\
 A&=G^6-3XGH^5,\qquad B=3G^5H+XH^6,\qquad D=5G^3H^3.
\end{aligned}
\end{equation}

The signs of the first nonzero finite tail layer follow from two positive auxiliary series.

\begin{lemma}\label{lem:cubic-positive-series}

The series

\begin{equation}\label{eq:cubic-UV}
 \mathscr U=\frac{A-B+D}{1-X},\qquad
 \mathscr V=D-\mathscr U
\end{equation}

have strictly positive coefficients in every nonnegative degree.

\end{lemma}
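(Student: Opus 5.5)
The plan is to reduce both series to the positive series \(\mathscr C=\mathscr C_0\) of Lemma~\ref{lem:boundary-series}. The key observation is that the numerator \(A-B+D\) is, up to a multiple of \(1-X\), divisible by \(P:=G^2-GH-H^2=-(1-X)\mathscr C\). Setting \(X=1\) only in the explicit \(X\)-factors of \eqref{eq:cubic-dissection}, a direct coefficient comparison gives the exact identity
\[
 A-B+D=P\,\mathcal Q+(1-X)(3GH^5+H^6),\qquad
 \mathcal Q=G^4-2G^3H-G^2H^2+2GH^3+H^4 .
\]
Division by \(1-X\) then yields the division-free formulas
\[
 \mathscr U=3GH^5+H^6-\mathscr C\,\mathcal Q,\qquad
 \mathscr V=5G^3H^3-3GH^5-H^6+\mathscr C\,\mathcal Q .
\]
So everything reduces to controlling the sign-indefinite sextic \(\mathcal Q\) against the explicit positive terms.

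Next, I would substitute \(G=H+XK\) and \(H=K+X^2L\) from \eqref{eq:boundary-recurrence}, as in the residue-two and residue-three computations of Lemma~\ref{lem:boundary-linear}. This gives \(\mathcal Q=-H^4+X(\cdots)\), where at \(X=0\) the leading quartic is \(1-2-1+2+1=1\) times \(H^4\) after the substitution \(G=H\). More usefully, \(\mathcal Q\) splits as \(\mathcal Q=\mathcal Q_+-\mathcal Q_-\) with \(\mathcal Q_\pm\) explicit nonnegative combinations of monomials in \(H,K,X\).

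For \(\mathscr U\), I would bound \(\mathscr C\,\mathcal Q_+\) using \(\mathscr C\preceq H^2\) from \eqref{eq:boundary-C-bound}. Where that is too weak, I would replace \(\mathscr C\) by its series \eqref{eq:boundary-C}, \(\mathscr C=H K+X^4\mathscr C_2\), so that \(H^2-\mathscr C=X(1-X)H^{\cdot}\cdots\), in the same manner as the decomposition of \(2T-B\) through \(\mathscr C_1\). The aim is to exhibit \(\mathscr U\) as \(H^6\) plus a manifestly nonnegative remainder, and similarly \(\mathscr V\) as \(G^3H^3\) (or \(2G^3H^3\)) plus a nonnegative remainder. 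Any remaining negative pieces would be absorbed through the dominance relations \(K\preceq H\preceq G\preceq(1+X)H\).

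Strictness in every degree would then follow from the dominant positive term. Every coefficient of \(H^6\) in degree \(u\ne1\) is positive, and the coefficients of \(G^3H^3\) are positive in every degree, since \(G\) contains the part \(1\). For \(\mathscr U\), the degree-one coefficient of \(H^6\) vanishes, so I would check degree one separately from the explicit \(3GH^5\) contribution, which has degree-one coefficient \(3\), minus the small \(\mathscr C\mathcal Q\) term. Alternatively, the degree-one gap could be handled with the insertion inequalities \eqref{eq:boundary-insertion}--\eqref{eq:boundary-insertion-plus}, comparing a few low-degree coefficients of the subtracted terms with those of a six-row tuple.

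The main obstacle is the second step: finding a nonnegative rearrangement of \(-\mathscr C\,\mathcal Q\) (respectively \(+\mathscr C\,\mathcal Q\)) together with the positive terms. The difficulty is that \(\mathcal Q\) has mixed signs and \(\mathscr C\) is only known through the series \eqref{eq:boundary-C} and the upper bound \(\mathscr C\preceq H^2\). My first attempt will be to expand \(\mathcal Q\) fully in the \(H,K\)-basis and group terms so that each negative monomial is dominated by one factor of a positive monomial, using \(\mathscr C\preceq H^2\) termwise. If that fails, I would fall back on the \(\mathscr C_1\) and \(\mathscr C_2\) identities to gain extra factors of \(X\), and verify the finitely many low-degree coefficients directly.
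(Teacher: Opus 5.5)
Your division-free identity is correct, and it is in substance the paper's expansion. Since $G=H+XK$ gives $3GH^5+H^6=4H^6+3XH^5K$, your formula for $\mathscr U$ becomes the paper's $\mathscr U=4H^6+3XH^5K-(1-X)^2\mathscr C^3$ once $\mathcal Q$ is identified. However, the proposal stops exactly where the content lies. You call the positivity of $3GH^5+H^6-\mathscr C\mathcal Q$ and of $5G^3H^3-3GH^5-H^6+\mathscr C\mathcal Q$ ``the main obstacle'' and leave it to a search with fallbacks, so as written the lemma is not proved. The observation you are missing is that $\mathcal Q$ is not an unstructured sign-indefinite form: it is exactly $P^2$, because $(G^2-GH-H^2)^2=G^4-2G^3H-G^2H^2+2GH^3+H^4$. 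Hence $\mathscr C\mathcal Q=(1-X)^2\mathscr C^3$, and your identity reads $A-B+D=P^3+(1-X)(3GH^5+H^6)$. Note also that $\mathcal Q=H^4+X(\cdots)$, not $-H^4+X(\cdots)$; your own check $1-2-1+2+1=1$ gives the sign.

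With this the obstacle disappears. The paper combines $0\preceq\mathscr C\preceq H^2$ with $X^2H\preceq H$ (raise the largest part by two) and $H^6=H^5K+X^2H^5L\preceq2H^5K$. This gives $\mathscr C^3\preceq H^6$, $X^2\mathscr C^3\preceq H^6$ and $2X\mathscr C^3\preceq4XH^5K$, whence $\mathscr U\succeq2H^6+3XH^5K+2X\mathscr C^3$ and $\mathscr V\succeq H^6+8XH^5K+\mathscr C^3+X^2\mathscr C^3$.

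Your own termwise plan also succeeds, once you write
\[
\mathcal Q=(H^2-XHK-X^2K^2)^2=H^4-2XH^3K-X^2H^2K^2+2X^3HK^3+X^4K^4
\]
and apply $\mathscr C\preceq H^2$ to a single factor. Dropping the negative monomials and using $X^2H\preceq H$ and $K\preceq H$ gives $\mathscr C\mathcal Q\preceq2H^6+2XH^5K$, so $\mathscr U\succeq2H^6+XH^5K$. Dropping the positive monomials instead gives $\mathscr V\succeq H^6+10XH^5K+14X^2H^4K^2+5X^3H^3K^3\succeq G^3H^3$. Strictness is then immediate: $H^6$ covers degrees $0$ and $\ge2$, $XH^5K$ covers degree one, and $G^3H^3$ covers every degree. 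No separate low-degree check or insertion inequality is needed.
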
\begin{proof}

Increasing the largest part by two, and sending the empty partition to the one-part partition consisting of two, proves $X^2H\preceq H$.  Since $H=K+X^2L$ and $L\preceq K$, we obtain

\begin{equation}\label{eq:cubic-H6}
 H^6=H^5K+X^2H^5L\preceq2H^5K.
\end{equation}

Here the factor $X^2$ is absorbed into one of the five factors $H$; no coefficient comparison between $H$ and $2K$ is required.  Direct expansion, using $(1-X)\mathscr C=GH+H^2-G^2$, yields

\begin{equation}\label{eq:cubic-UV-expansion}
\begin{aligned}
 \mathscr U
 &=4H^6+3XH^5K-(1-2X+X^2)\mathscr C^3,\\
 \mathscr V
 &=H^6+12XH^5K+15X^2H^4K^2+5X^3H^3K^3
   +(1-2X+X^2)\mathscr C^3.
\end{aligned}
\end{equation}

By Lemma~\ref{lem:boundary-series}, $\mathscr C^3\preceq H^6$ and $X^2\mathscr C^3\preceq H^6$.  For the negative term in the second line, use $2X\mathscr C^3\preceq2XH^6\preceq4XH^5K$.  Thus

\begin{equation}\label{eq:cubic-UV-positive}
\begin{aligned}
 \mathscr U&\succeq2H^6+3XH^5K+2X\mathscr C^3,\\
 \mathscr V&\succeq H^6+8XH^5K+\mathscr C^3+X^2\mathscr C^3.
\end{aligned}
\end{equation}

The product for $H$ in \eqref{eq:boundary-dissection} shows that $H^6$ has positive coefficients in degree zero and in every degree at least two.  The series $XH^5K$ supplies degree one.  Both lower bounds are consequently strictly positive in every degree.

\end{proof}

Define the cubic tail quotient by

\begin{equation}\label{eq:cubic-tail-definition}
 T_{3,n}(q)=\prod_{\substack{k>5n\\5\nmid k}}(1-q^k)^{-3},
 \qquad Q_n(q)^3=S(q)T_{3,n}(q).
\end{equation}

Two tail parts have total degree at least $10n+2$, whence

\begin{equation}\label{eq:cubic-first-tail}
 T_{3,n}(q)\equiv1+
 \frac{3X^n(q+q^2+q^3+q^4)}{1-X}\pmod{q^{10n+2}}.
\end{equation}

\begin{lemma}\label{lem:cubic-linear}

For every $n\ge1$,

\begin{equation}\label{eq:cubic-linear}
\begin{aligned}
 c_{5m+2}^{(3)}(n)&=c_{5m+4}^{(3)}(n)=0
       &&(0\le m<n),\\
 c_{5(n+r)+4}^{(3)}(n)&=3[X^r]\mathscr U>0,
 \qquad c_{5(n+r)+2}^{(3)}(n)=-3[X^r]\mathscr V<0
       &&(0\le r<n).
\end{aligned}
\end{equation}

\end{lemma}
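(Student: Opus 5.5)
The plan is to read off the residue-two and residue-four components of $Q_n(q)^3=S(q)T_{3,n}(q)$ directly. We combine the dissection \eqref{eq:cubic-dissection} with the first tail layer \eqref{eq:cubic-first-tail}, use the Jacobi vanishing \eqref{eq:cubic-jacobi} for the infinite product, and then invoke Lemma~\ref{lem:cubic-positive-series} for the signs. No analytic input is needed; everything is an identity of formal series truncated at degree $10n+2$.

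\emph{Step 1: the truncation.} First I would justify \eqref{eq:cubic-first-tail}. Each tail factor $(1-q^k)^{-3}$ with $k>5n$ and $5\nmid k$ equals $1+3q^k+O(q^{2k})$, and $2k\ge10n+2$. Any product of two distinct tail exponents also has degree at least $10n+2$. So modulo $q^{10n+2}$ the tail quotient is
\[
1+3\sum_{k>5n,\,5\nmid k}q^k=1+\frac{3X^n(q+q^2+q^3+q^4)}{1-X}.
\]
The degrees in the lemma are $5m+j$ with $m\le 2n-1$ and $j\in\{2,4\}$, so they are at most $10n-1$, and the truncation is exact for them.

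\emph{Step 2: the residue components.} Next I would multiply $S=A-qB+q^3D$ by $q+q^2+q^3+q^4$ and collect by residue modulo five. The contributions are as follows.
\begin{itemize}
\item $A$ contributes $A$ to each of the residues $1,\dots,4$.
\item $-qB$ contributes $-B$ to residues $2,3,4$ and $-XB$ to residue $0$.
\item $q^3D$ contributes $D$ to residue $4$, $XD$ to residue $0$, $XD$ to residue $1$, and $XD$ to residue $2$.
\end{itemize}
Hence the residue-four component is $A-B+D$ and the residue-two component is $A-B+XD$. Together with $[q^{5m+2}]S=[q^{5m+4}]S=0$, this gives
\[
c^{(3)}_{5m+4}(n)=3[X^{m-n}]\frac{A-B+D}{1-X},\qquad
c^{(3)}_{5m+2}(n)=3[X^{m-n}]\frac{A-B+XD}{1-X}
\]
for $0\le m\le 2n-1$, where negative exponents give zero.

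\emph{Step 3: identification with $\mathscr U$ and $\mathscr V$.} Writing $A-B+XD=(A-B+D)-(1-X)D$, the residue-two series becomes $\mathscr U-D=-\mathscr V$, while the residue-four series is $\mathscr U$ itself. For $m<n$ the tail term contributes nothing, which gives the zero blocks. For $m=n+r$ with $0\le r<n$, we obtain $3[X^r]\mathscr U$ and $-3[X^r]\mathscr V$. Strict positivity of every coefficient of $\mathscr U$ and $\mathscr V$, from Lemma~\ref{lem:cubic-positive-series}, yields the stated strict signs.

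The only delicate point is the bookkeeping in Step 2. One has to track which powers $q^5$ are absorbed into $X$, and in particular notice that $q^3\cdot q^4 D=Xq^2D$ lands in residue two, which is what makes the two classes differ by exactly $(1-X)D$. I would double-check this collection against the expansion \eqref{eq:cubic-UV-expansion}, or against a few low-degree coefficients computed directly for small $n$.
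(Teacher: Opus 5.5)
Your proposal is correct and takes the same route as the paper. The paper also reads off the residue-four and residue-two components $A-B+D$ and $A-B+XD$ of $(q+q^2+q^3+q^4)S$, identifies their quotients by $1-X$ with $\mathscr U$ and $-\mathscr V$, and concludes from the Jacobi vanishing \eqref{eq:cubic-jacobi}, the first tail layer \eqref{eq:cubic-first-tail}, and Lemma~\ref{lem:cubic-positive-series}; your checks of the truncation degree and of the collection step (for example $q^7D=Xq^2D$) are accurate details the paper leaves implicit.
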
\begin{proof}

By \eqref{eq:cubic-dissection}, the residue-four and residue-two components of $(q+q^2+q^3+q^4)S(q)$, after removal of their external powers of $q$, are respectively $A-B+D$ and $A-B+XD$.  Dividing these by $1-X$ gives $\mathscr U$ and $-\mathscr V$.  Now apply \eqref{eq:cubic-jacobi}, \eqref{eq:cubic-first-tail}, and Lemma~\ref{lem:cubic-positive-series}.

\end{proof}

\subsection{A modular bound on the whole complementary contour}

The following estimate is adapted to narrower principal arcs than those used for the square.  All angular distances are circular distances.

\begin{lemma}\label{lem:cubic-modular-tail}

Let $N\ge40$, $r=\e^{-1/N}$.  Outside the four arcs $|\theta-2\pi h/5|\le1/N$, $1\le h\le4$, one has

\begin{equation}\label{eq:cubic-modular-tail}
 |Q_\infty(r\e^{\ii\theta})|\le\exp(cN/2+1/100).
\end{equation}

\end{lemma}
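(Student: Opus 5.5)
The plan is to bound \(\log|Q_\infty(r\e^{\ii\theta})|\) on the whole circle. Write \(Q_\infty(q)=P(q)/P(q^5)\) and use the modular transformation of the Dedekind eta function at every rational point of a Farey dissection. The bound should come from one explicit growth exponent. Near a reduced fraction \(a/k\), \(q=\e^{2\pi\ii a/k}\e^{-w}\) with \(\Rea w=1/N\), the leading size of \(\log|Q_\infty|\) is
\[
 \frac{\pi^2}{6}\Rea\frac1w\left(\frac{\gcd(5,k)^2}{5k^2}-\frac1{k^2}\right).
\]
This factor is negative for \(5\nmid k\). For \(k=5\ell\) it equals \(c/\ell^2\). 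Hence only \(k=5\), i.e.\ the four primitive fifth roots, can produce growth \(cN\). All other rationals give at most \(cN/4\) on their arcs. (The root \(1\) and the other \(5\nmid k\) give decay.)

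\textbf{Step 1: the four principal arcs.} Near \(\zeta^h\), write \(q=\zeta^h\e^{-z/N}\) with \(z=1-\ii y\), \(y=N(\theta-2\pi h/5)\). Use the eta transformation \eqref{eq:boundary-eta} with \(n\) replaced by \(N\):
\[
 \log|Q_\infty|=\frac{cN}{1+y^2}-\frac1{6N}+\log\left|\frac{P(\zeta^{-h^{-1}}w)}{P(w^5)}\right|,\qquad
 |w|=\e^{-6cN/(1+y^2)}.
\]
Here \(|\omega_h|=1\). For the quotient I use \(\log|P(\zeta' w)|\le\sum_m|w|^m/(m(1-|w|^m))\) and the same bound for \(-\log|P(w^5)|\); this is essentially the \(2u_*\) bound of \eqref{eq:boundary-log-product}. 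The region is \(|y|\ge1\), where \(cN/(1+y^2)\le cN/2\). On the part \(1\le|y|\le y_1\) with \(6cN/(1+y_1^2)\ge 5\), say, the quotient term is at most about \(3\e^{-5}<1/100\). For \(N\ge40\) and \(y=1\) one has \(6cN/2>31\), so the correction there is negligible. For larger \(|y|\), the decreasing main term \(cN/(1+y^2)\le cN/(1+y_1^2)\) leaves room for a correction of size \(O((1+y^2)/N)\). This holds as long as \(|y|\lesssim\sqrt N\), i.e.\ up to the usual Farey arc length \(\asymp 1/(5\sqrt N)\) around \(2\pi h/5\).

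\textbf{Step 2: the remaining Farey arcs.} Take the Farey dissection of order \(\lfloor\sqrt N\rfloor\). On the arc of \(a/k\), apply the general eta transformation to \(P(q)\) and to \(P(q^5)\); for \(P(q^5)\) the relevant fraction is \(5a/k\), with denominator \(k/\gcd(5,k)\). The Dedekind-sum phases have modulus one and drop out. The moduli give the exponent above, multiplied by \(\Rea(1/w)\le N\). The transformed products have nomes of modulus \(\exp(-2\pi\Rea(1/(k^2 w)))\), and \(k^2|w|\lesssim 1\) on a Farey arc. They therefore contribute bounded factors, which I bound explicitly as in Step 1. There are also the square-root prefactors \(|w|^{-1/2}\) in the ratio of the two eta transformations. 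Their ratio is \(\sqrt{\gcd(5,k)^2/5}\), a constant, and \(\tfrac12\log 5<1\) is absorbed because the main term is at most \(cN/4\), well below \(cN/2\). For \(5\nmid k\) the main term is even \(\le0\).

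\textbf{Main obstacle.} The delicate part is making the constant \(1/100\) honest. I need explicit, \(N\)-uniform bounds for the transformed product quotients on the principal arcs for \(1\le|y|\lesssim\sqrt N\). At the Farey endpoints the nome modulus is only bounded away from \(1\), not small. There the crude bound \(\log P(|u|)^{-1}\le\pi^2/(6s)\) with \(|u|=\e^{-s}\) costs \(O(1)\). I would absorb this \(O(1)\) into the gap \(cN/2-cN/(1+y^2)\), which is already \(\ge cN/10\) once \(|y|\ge3/2\). For \(1\le|y|\le3/2\) I would use the small-nome bound from Step 1. Checking that these two regimes overlap for every \(N\ge40\), and that the Farey endpoint arcs for \(k=10,15,\ldots\) fit under \(cN/2+1/100\) with their constants, is a finite explicit verification. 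I expect it to carry most of the work. If the overlap fails at \(N=40\), I would enlarge the small-nome range by increasing the Farey order constant.
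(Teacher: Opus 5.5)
\textbf{Gap in Step~2.} Your exponent bookkeeping is correct. Near $a/k$ the leading term is $cN/(\ell^2(1+N^2\Delta^2))$ when $k=5\ell$, and it is negative when $5\nmid k$. Step~1 on the principal arcs also goes through.

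The gap is in Step~2, on the arcs with $5\nmid k$, which you dismiss because their leading term is nonpositive. There the correction factors are bounded but not small. For $5\nmid k$, the transformed nome of $P(q^5)$ is the fifth root of the transformed nome of $P(q)$. If $Y$ is the transformed height of $\tau=\theta/(2\pi)+\ii/(2\pi N)$ at $a/k$, the height for $5\tau$ at $5a/k$ is only $Y/5$. On a Farey arc of order $\lfloor\sqrt N\rfloor$, $Y$ drops below $0.2$ near the arc endpoints.

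Take $N=40$, Farey order $6$, and $\theta/(2\pi)$ near $1/7$. Then $Y\approx0.19$, $|q_5|\approx\e^{-0.24}$, and $-\log P(|q_5|)\approx5.1$. Adding $\tfrac12\log5$ and $-\log P(|q_1|)\approx0.5$ gives a bound of about $6.3$. The target is $cN/2+1/100<5.28$, so the inequality is not established for $N$ near $40$. Note also that the prefactor ratio is $\sqrt{5/\gcd(5,k)}$, which equals $\sqrt5$ rather than $1/\sqrt5$ when $5\nmid k$. The verification you postpone is therefore not mere bookkeeping. Your obstacle paragraph also looks in the wrong place: the principal arcs and the arcs with $k=10,15,\ldots$ are the easy cases.

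\textbf{The missing idea.} Choose the modular transformation point by point instead of from a fixed dissection. Let $b\tau-a$ be a shortest nonzero vector of $\mathbb Z\tau+\mathbb Z$; then the transformed height satisfies $Y\ge\sqrt3/2$ uniformly, and $b=0$ is impossible for $N\ge40$. With $g=\gcd(b,5)$, the height for $5\tau$ is $g^2Y/5$, and
$|Q_\infty|=\e^{-1/(6N)}\sqrt{5/g}\,\e^{\pi(g^2/5-1)Y/12}\,|P(q_1)|/|P(q_5)|$.
\begin{itemize}
\item If $g=1$, then $|q_1|<1/200$ and $|q_5|<17/50$, so $|Q_\infty|<\sqrt5\,\e^{4/5}<6$, an absolute bound.
\item If $b=5k$, the exponent is $cN/(k^2(1+N^2\Delta^2))$ and the product corrections total less than $1/100$. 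This exponent is at most $cN/4$ for $k\ge2$, and at most $cN/2$ for $k=1$ off the excluded arcs.
\end{itemize}
This is the paper's argument, and it needs no arc-by-arc verification.

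Within your framework, your fallback of enlarging the Farey order is the right repair, but for this reason rather than the one you give. Order $\lfloor\sqrt{2\pi N}\rfloor$ forces $Y>1/2$ on every arc, which brings the $5\nmid k$ bound down to about $2.3$. That choice and the resulting check have to be made explicitly.
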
\begin{proof}

Write $\tau=\theta/(2\pi)+\ii/(2\pi N)$ and $y=\Ima\tau$.  A transformation sending $\tau$ into the standard modular fundamental domain has lower row $(b,-a)$ with $b>0$, $\gcd(a,b)=1$, and

\begin{equation}\label{eq:cubic-fundamental-height}
 Y=\frac{y}{|b\tau-a|^2}\ge\frac{\sqrt3}{2}.
\end{equation}

One can choose such a row by taking a shortest nonzero vector of $\mathbb Z\tau+\mathbb Z$.  It cannot have $b=0$: for some integer $k$, $|\tau-k|<1$ when $N\ge40$.  Put $g=\gcd(b,5)$.  The primitive lower row $(b/g,-5a/g)$ for $5\tau$ gives transformed height $Y_5=g^2Y/5$.  The eta transformation and $Q_\infty(q)=q^{1/6}\eta(\tau)/\eta(5\tau)$ imply \cite[Section 23.18]{DLMF}

\begin{equation}\label{eq:cubic-modular-quotient}
 |Q_\infty|=\e^{-1/(6N)}\sqrt{5/g}\,
 \e^{\pi(Y_5-Y)/12}\frac{|P(q_1)|}{|P(q_5)|},
 \qquad |q_1|=\e^{-2\pi Y},\quad |q_5|=\e^{-2\pi Y_5}.
\end{equation}

Here $P$ is defined in \eqref{eq:boundary-eta}.  The elementary logarithmic bound

\begin{equation}\label{eq:cubic-P-log}
 |\log|P(q)||\le\sum_{k\ge1}\frac{|q|^k}{k(1-|q|^k)}
 \le\frac{\rho}{(1-\rho)^2}\qquad(|q|\le\rho<1).
\end{equation}

If $g=1$, then $|q_1|<1/200$, $|q_5|<17/50$; the sum of the two logarithmic bounds is less than $4/5$.  Since $Y_5-Y<0$, the quotient in \eqref{eq:cubic-modular-quotient} is less than $\sqrt5\e^{4/5}<6$, which is below the right-hand side of \eqref{eq:cubic-modular-tail}.

If $g=5$, write $b=5k$ and $\Delta=\theta-2\pi a/b$.  The two logarithmic errors sum to less than $1/100$, and

\begin{equation}\label{eq:cubic-cusp-height}
 \frac{\pi(Y_5-Y)}{12}
 =\frac{cN}{k^2(1+N^2\Delta^2)}.
\end{equation}

For $k\ge2$ this is at most $cN/4$.  For $k=1$, the rational point is one of the four nontrivial fifth roots and the complementary-arc condition gives $|N\Delta|\ge1$, so it is at most $cN/2$.  These estimates prove the lemma on the whole complementary contour.

\end{proof}

\subsection{The cube: a uniform estimate at the unbounded edge}

Throughout this subsection let $t\ge2$, $N=n/t\ge40$, and $u=\e^{-5t}$.  Introduce

\begin{equation}\label{eq:cubic-scaled-radial}
\begin{aligned}
 G_t(w)&=tF(tw)=\frac c w+\mathscr R_t(w),\\
 \mathscr R_t(w)&=\frac1w\sum_{k\ge1}a_k\e^{-5ktw},\qquad
 a_k=\begin{cases}-1/(5k^2),&5\nmid k,\\4/(5k^2),&5\mid k.\end{cases}
\end{aligned}
\end{equation}

The factor $1/w$ is retained in all derivative estimates.  On $\Rea w=1$ the series and its derivatives give

\begin{equation}\label{eq:cubic-radial-errors}
\begin{aligned}
 u&<\frac1{20000},\qquad
 |\mathscr R_t(w)|\le\frac{u}{5(1-u)},\qquad
 3\{c-G_t(1)\}<u,\\
 \frac{|\mathscr R_t''(w)|}{2}
 &\le\sum_{a\in\{5,25\}}
 \frac{(at)^2+2at+2}{2a}\frac{\e^{-at}}{1-\e^{-at}}
 <\frac1{1000}.
\end{aligned}
\end{equation}

For the first bound, $|a_k|\le1/5$ and $|w|\ge1$ suffice.  Each summand in the derivative bound decreases for $t\ge2$; at $t=2$ it is bounded by

\begin{equation}\label{eq:cubic-radial-rational}
 \frac{122}{10(20000-1)}+
 \frac{2602}{50(20000^5-1)}<\frac1{1000}.
\end{equation}

In particular, with $\overline V=G_t''(1)=t^3V(t)$,

\begin{equation}\label{eq:cubic-scaled-variance}
 2c-\frac1{500}<\overline V<2c+\frac1{500}.
\end{equation}

For residues two and four, \eqref{eq:cubic-jacobi} permits subtraction on the entire Cauchy contour:

\begin{equation}\label{eq:cubic-subtraction}
 c_d^{(3)}(n)=[q^d]\mathscr S_n(q),\qquad
 \mathscr S_n(q)=S(q)\{T_{3,n}(q)-1\}
 \quad(d\equiv2,4\pmod5).
\end{equation}

On $|q|=\e^{-1/N}$, geometric summation and $1/(\e^{1/N}-1)\le N$ give

\begin{equation}\label{eq:cubic-tail-log}
 |\log T_{3,n}(q)|\le\frac{3Nu}{1-u}<4Nu,
 \qquad |\mathscr S_n(q)|\le4Nu\e^{4Nu}|Q_\infty(q)|^3.
\end{equation}

Thus the factor $u$ survives even on the complementary contour.  Define the two positive normalizations

\begin{equation}\label{eq:cubic-normalizations}
\begin{aligned}
 \mathfrak B_3(t,n,d)
 &=\frac{\e^{3nF(t)+dt/n-t/(2n)}}{\sqrt{6\pi n^3V(t)}},\\
 \mathfrak B_3^\sharp(t,n,d)
 &=\frac{\e^{3nF(t)+(d-5n)t/n-t/(2n)}}{\sqrt{6\pi n^3V(t)}}
 =u\mathfrak B_3(t,n,d).
\end{aligned}
\end{equation}

The ordinary and shifted saddle equations are respectively

\begin{equation}\label{eq:cubic-edge-saddles}
 d=-3n^2F'(t),\qquad d-5n=-3n^2F'(t).
\end{equation}

Combining Lemma~\ref{lem:cubic-modular-tail} with \eqref{eq:cubic-tail-log}, \eqref{eq:cubic-radial-errors}, and \eqref{eq:cubic-scaled-variance}, the complementary contour contributes, after division by $\mathfrak B_3^\sharp$, less than

\begin{equation}\label{eq:cubic-complement-budget}
\begin{aligned}
 4\sqrt{6\pi\overline V}\,N^{5/2}
 \e^{-3cN/2+5Nu+3/100+1/(2N)}
 &<\frac{64}{5}N^{5/2}\e^{-1559N/4000+17/400}\\
 &<\frac1{40}.
\end{aligned}
\end{equation}

The last expression decreases for $N\ge40$.  At the endpoint, it suffices to prove $\e^{6219/400}>5242880$, using $\sqrt{40}<32/5$; the Taylor sum through degree thirty already exceeds this integer.  For residues zero, one and three no subtraction is made.  Division by $\mathfrak B_3$ gives the same bound divided by $4N$, and hence a complementary contribution below $1/6400$.

We next fold the four principal arcs.  With the phases $\omega_h$ from \eqref{eq:boundary-lambda}, let

\begin{equation}\label{eq:cubic-folded-amplitude}
 \Lambda_h(x)=\omega_h\prod_{s=1}^4(1-\zeta^{-hs}x)^{1/2-s/5},
 \qquad\gamma_{3,j}(x)=\sum_{h=1}^4\zeta^{-hj}\Lambda_h(x)^3.
\end{equation}

The branches are continued from $x=0$.  On the positive real axis, $\gamma_{3,j}(\e^{-5t})=\Gamma_{3,j}(t)$.  Direct calculation in $\mathbb Q(\zeta)$ gives

\begin{equation}\label{eq:cubic-limiting-amplitudes}
\begin{aligned}
 (\gamma_{3,0}(0),\ldots,\gamma_{3,4}(0))
 &=\left(\frac{5-\sqrt5}{2},-\frac{5+\sqrt5}{2},0,\sqrt5,0\right),\\
 \gamma_{3,2}'(0)&=-\frac32(\sqrt5+1),\qquad
 \gamma_{3,4}'(0)=\frac32(\sqrt5-1).
\end{aligned}
\end{equation}

These identities also follow by expanding the four factors in \eqref{eq:cubic-folded-amplitude} to first order and reducing with $1+\zeta+\zeta^2+\zeta^3+\zeta^4=0$.  For $j=2,4$, write $\gamma_{3,j}(x)=x\mathfrak b_j(x)$.  The absolute sum of the four exponents after cubing is $12/5$, so the coefficient of $x^k$ is bounded in modulus by $4(12/5)_k/k!$.  For $k\ge2$ successive coefficients of this majorant have ratio at most $22/15$.  Consequently,

\begin{equation}\label{eq:cubic-amplitude-majorants}
\begin{aligned}
 |\mathfrak b_j(x)-\mathfrak b_j(0)|&\le17|x|
       &&(j=2,4,\ |x|\le10^{-3}),\\
 |\gamma_{3,j}(x)-\gamma_{3,j}(0)|&<10|x|
       &&(0\le j\le4,\ |x|\le1/20000),\\
 \mathfrak b_4(0)&>9/5,\qquad \mathfrak b_2(0)<-24/5.
\end{aligned}
\end{equation}

For the first line, the coefficient of $x^2$ in the majorant is $408/25$, and $(408/25)/(1-(22/15)10^{-3})<17$.  The second line follows in the same way, starting with coefficient $48/5$.

Put $w=1-\ii v$, $|v|\le1$, and $x_w=\e^{-5tw}$.  The exact eta identity \eqref{eq:boundary-eta}, cubed, gives a multiplicative correction $1+\varepsilon_h^\eta$ satisfying

\begin{equation}\label{eq:cubic-eta-error}
 Q_\infty(\zeta^h\e^{-w/N})^3
 =\omega_h^3\e^{3cN/w-w/(2N)}(1+\varepsilon_h^\eta),
 \qquad |\varepsilon_h^\eta|\le8\e^{-3cN}.
\end{equation}

Indeed, $\Rea(1/w)\ge1/2$, so \eqref{eq:cubic-P-log} bounds both transformed logarithms.  Apply Euler--Maclaurin only on the tail, to $f_a(y)=\log(1-\zeta^{ha}\e^{-5wy})$ at points $t+(k+a/5)/N$, $k\ge0$.  Retain its integral and first endpoint term.  The remaining endpoint terms and the periodic remainder satisfy

\begin{equation}\label{eq:cubic-tail-derivatives}
\begin{aligned}
 \sum_{a=1}^4|\mathsf B_2(a/5)|&=\frac4{25},\qquad
 \sum_{a=1}^4|\mathsf B_3(a/5)|=\frac{18}{125},\qquad
 \sup_{[0,1]}|\mathsf B_3|<\frac1{20},\\
 |f_a'(t)|&\le\frac{5|w|u}{1-u},\qquad
 |f_a''(t)|\le\frac{25|w|^2u}{(1-u)^2},\\
 \int_t^\infty|f_a'''(y)|\,dy
 &\le\frac{25|w|^3u(1+u)}{(1-u)^3}.
\end{aligned}
\end{equation}

The shifted formula \cite[Sections 2.10 and 24.8]{DLMF}, with the three copies of each factor included, therefore yields

\begin{equation}\label{eq:cubic-tail-EM}
\begin{aligned}
 T_{3,n}(\zeta^h\e^{-w/N})
 &=\e^{3N\{G_t(w)-c/w\}}
       \left(\frac{\Lambda_h(x_w)}{\omega_h}\right)^3\e^{\delta_h},\\
 |\delta_h|&\le u\left\{\frac{7}{4N}+\frac{11}{N^2}\right\}.
\end{aligned}
\end{equation}

To see the constants directly, the three contributions, before the harmless simplification using $u<1/20000$, are

\begin{equation}\label{eq:cubic-tail-EM-budget}
 u\left\{\frac{(6/5)\sqrt2}{N(1-u)}
 +\frac{18/5}{N^2(1-u)^2}
 +\frac{5\sqrt2(1+u)}{N^2(1-u)^3}\right\}.
\end{equation}

Both $|\Lambda_h(x_w)|^3$ and $\e^{|\delta_h|}$ are less than $1001/1000$.  Exponentiating and summing the four endpoint errors therefore costs less than $9u/N$.

For the cancelling residues it is essential to keep the eta term in the form $\varepsilon_h^\eta(T_{3,n}-1)$.  Its sum, divided by $\e^{3NG_t(w)-w/(2N)}$, has modulus at most $128uN\e^{-(3c-5u)N}$.  Using the exact cancellation in \eqref{eq:cubic-limiting-amplitudes}, we obtain

\begin{equation}\label{eq:cubic-folded-error}
\begin{aligned}
 \sum_{h=1}^4\zeta^{-hj}\mathscr S_n(\zeta^h\e^{-w/N})
 &=\e^{3NG_t(w)-w/(2N)}x_w\{\mathfrak b_j(0)+\epsilon_j(w)\},\\
 |\epsilon_j(w)|
 &\le17u+\frac9N+128N\e^{-3119N/4000}
 <17u+\frac9N+10^{-8}\qquad(j=2,4).
\end{aligned}
\end{equation}

The factor $x_w$ is now absorbed into the exponential, producing precisely the shifted degree $d-5n$.  For the ordinary residues no subtraction is made and the corresponding folded amplitude is $\gamma_{3,j}(0)+\epsilon_j^o(w)$, where

\begin{equation}\label{eq:cubic-ordinary-error}
 |\epsilon_j^o(w)|\le10u+\frac{9u}{N}+32\e^{-3119N/4000}
 <10u+\frac{9u}{N}+10^{-8}\qquad(j=0,1,3).
\end{equation}

We finish the edge estimate by a direct positive lower bound for the real phase integral.  Put

\begin{equation}\label{eq:cubic-real-kernel}
\begin{aligned}
 \mathscr K_{N,t}(y)
 &=3N\left\{G_t(1-\ii y/\sqrt N)-G_t(1)
              +\frac{\ii yG_t'(1)}{\sqrt N}\right\}
              +\frac{\ii y}{2N^{3/2}},\\
 \mathscr I_{N,t}
 &=\sqrt{\frac{3\overline V}{2\pi}}
       \int_{-\sqrt N}^{\sqrt N}\e^{\mathscr K_{N,t}(y)}\,dy.
\end{aligned}
\end{equation}

\begin{lemma}\label{lem:cubic-real-kernel}

For $t\ge2$ and $N\ge40$, the integral is real and satisfies

\begin{equation}\label{eq:cubic-real-kernel-bounds}
 \mathscr I_{N,t}>\frac45,\qquad
 \sqrt{\frac{3\overline V}{2\pi}}
 \int_{-\sqrt N}^{\sqrt N}|\e^{\mathscr K_{N,t}(y)}|\,dy<\frac32.
\end{equation}

\end{lemma}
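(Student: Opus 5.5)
The integral is real by symmetry, and both bounds come from an explicit Gaussian comparison. I would treat a central range \(|y|\le y_0\) (say \(y_0=\sqrt N/4\)) by Taylor expansion, and the outer range \(y_0\le|y|\le\sqrt N\) by a crude decay estimate.

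\emph{Reality.} Since \(G_t\) is real on the real axis, \(G_t(\overline w)=\overline{G_t(w)}\). Hence \(\mathscr K_{N,t}(-y)=\overline{\mathscr K_{N,t}(y)}\), and the symmetric integral is real. It therefore equals \(\sqrt{3\overline V/(2\pi)}\int\e^{\Rea\mathscr K}\cos(\Ima\mathscr K)\,dy\).

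\emph{Phase decomposition.} Write \(G_t=c/w+\mathscr R_t\) and split \(\mathscr K=\mathscr K^{(0)}+\mathscr K^{(1)}+\ii y/(2N^{3/2})\). The main part \(\mathscr K^{(0)}\) comes from \(c/w\) and is explicit. With \(\epsilon=y/\sqrt N\) and \(|\epsilon|\le1\),
\[
 \frac{c}{1-\ii\epsilon}-c-\ii c\epsilon=-\frac{c\epsilon^2}{1-\ii\epsilon},
\]
so
\[
 \Rea\mathscr K^{(0)}=-\frac{3cy^2}{1+\epsilon^2}\le-\frac{3c}{2}y^2,
 \qquad
 \Ima\mathscr K^{(0)}=-\frac{3c\,y^2\epsilon}{1+\epsilon^2}.
\]
The correction \(\mathscr K^{(1)}=3N\{\mathscr R_t(1-\ii\epsilon)-\mathscr R_t(1)+\ii\epsilon\mathscr R_t'(1)\}\) satisfies \(|\mathscr K^{(1)}|\le 3N\epsilon^2\sup|\mathscr R_t''|/2<3y^2/1000\) by \eqref{eq:cubic-radial-errors}. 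Thus
\[
 \Rea\mathscr K\le-\Bigl(\frac{3c}{2}-\frac3{1000}\Bigr)y^2
\]
on the whole range. Since \(3c/2\approx0.789\), this gives a Gaussian envelope \(\e^{-0.786y^2}\).

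\emph{Absolute bound.} Multiplying the envelope integral by \(\sqrt{3\overline V/(2\pi)}\) and using \eqref{eq:cubic-scaled-variance} gives at most
\[
 \sqrt{3(2c+1/500)/(2\cdot0.786)}\approx\sqrt{2.02}\approx1.42<\tfrac32 .
\]
This proves the second inequality.

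\emph{Lower bound.} Use \(\cos\theta\,\e^{r}\ge\e^{r}(1-\theta^2/2)\) on the central range. There \(\Rea\mathscr K\ge-3cy^2-3y^2/1000\), and \(|\Ima\mathscr K|\le 3cy^3/\sqrt N+3y^2/1000+|y|/(2N^{3/2})\). The sharper statement \(\Rea\mathscr K^{(0)}=-3cy^2+O(y^4/N)\) has an explicit remainder \(3cy^2\epsilon^2/(1+\epsilon^2)\le3cy^4/N\). The central integral is then at least the Gaussian mass \(\approx1\) (the weight \(\sqrt{3\overline V/2\pi}\) normalises \(\e^{-3cy^2}\) up to the factor \(\sqrt{\overline V/2c}\)). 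From this mass subtract:
\begin{itemize}
\item moments \(O(1/N)\) coming from \(\theta^2\) and the quartic real term, using \(N\ge40\);
\item an \(O(10^{-3})\) loss from \(\mathscr R_t\);
\item the Gaussian tail beyond \(y_0\).
\end{itemize}
On the outer range the absolute envelope gives at most \(\approx\e^{-0.786N/16}\)-size mass, which is negligible.

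\emph{Main obstacle.} The hard step is keeping the cubic phase term under control at \(N=40\) so that the total loss is below \(1/5\). Its second moment is \(9c^2\langle y^6\rangle/(2N)\). With \(\langle y^6\rangle=15/(6c)^3\) this is about \(0.039\), which is fine. The danger is that the crude \(1-\theta^2/2\) argument, combined with the \(\e^{+3cy^4/N}\)-type distortions, still fits within \(1/5\) only narrowly, and the margin must be checked with explicit constants. If it is too tight, I would first try a larger cutoff \(y_0\), or replace \(1-\theta^2/2\) by pairing the imaginary parts more carefully.
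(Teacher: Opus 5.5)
\textbf{Gap: the outer-range estimate in the lower bound.}

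Your overall route is the paper's. You get reality from conjugate symmetry, use the reference kernel coming from $c/w$, and bound the $\mathscr R_t$ perturbation by $3y^2/1000$. The envelope $\Rea\mathscr K_{N,t}\le-(3c/2-3/1000)y^2$ gives exactly the paper's absolute bound: the squared normalized integral is below $(3c+3/1000)/(3c/2-3/1000)<9/4$. Your value $\sqrt{2.02}\approx1.42$ is correct. However, $3c/2=\pi^2/25\approx0.395$, not $0.789$, so the envelope is $\e^{-0.392y^2}$. (The expression you display, $\sqrt{3(2c+1/500)/(2\cdot0.786)}$, is about $1.00$.)

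That slip breaks the lower bound. Put $\kappa=3c/2-3/1000$ and use your cutoff $y_0=\sqrt N/4$. At the endpoint $N=40$, which the later proposition needs, $y_0\approx1.58$. The absolute outer estimate is then
\[
\sqrt{3\overline V/(2\kappa)}\,\mathrm{erfc}(y_0\sqrt\kappa)\approx1.42\times0.16\approx0.23,
\]
which is far from negligible. The normalized integral of $\e^{\Rea\mathscr K_{N,t}}$ over $|y|\le1.58$ is itself below about $0.97$. So your lower bound cannot exceed about $0.74<4/5$, however the phase is treated in the central range.

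The bottleneck is therefore the outer range, not the cubic phase. The phase moment is only about $15/(16\cdot3cN)\approx0.030$ at $N=40$, not $0.039$. The paper avoids the problem by fixing the cutoff at $|y|=2$, where $|\Ima|<6/5$ gives $\cos\phi\ge1-\phi^2/2>0$. Its losses are:
\begin{itemize}
\item the Gaussian tail, below $3/200$;
\item the phase moments, below $4/125$;
\item the outer range, bounded by Mills' inequality as $\e^{-6c}/\sqrt{3\pi c}<7/50$;
\item the $\mathscr R_t$ perturbation, below $3/500$.
\end{itemize}
With $\sqrt{\overline V/2c}>499/500$ this yields $402693/500000$, only about $0.005$ above $4/5$, so every constant must be tracked. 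Alternatively, keep $y_0=\sqrt N/4$ but bound the outer range with the unreduced exponent $-3cy^2/(1+y^2/N)+3y^2/1000$. That lowers the outer cost at $N=40$ to roughly $0.06$, and your budget then closes.
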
\begin{proof}

Conjugate symmetry proves reality.  First replace $G_t(w)$ by $c/w$ and write $a=3c\in(39/50,4/5)$.  The reference exponent is

\begin{equation}\label{eq:cubic-reference-kernel}
 -\frac{ay^2}{1+y^2/N}
 +\ii\left\{-\frac{ay^3/\sqrt N}{1+y^2/N}
                  +\frac{y}{2N^{3/2}}\right\}.
\end{equation}

On $|y|\le2$, its imaginary part has modulus less than $6/5$, so $\cos\phi\ge1-\phi^2/2>0$.  Its real exponential can be bounded below by $\e^{-ay^2}$.  The omitted normalized Gaussian tail is less than $3/200$, while the full Gaussian moments of $\phi^2/2$ contribute at most

\begin{equation}\label{eq:cubic-kernel-moments}
 \frac{15}{16aN}+\frac3{8aN^2}+\frac1{16aN^3}<\frac4{125}.
\end{equation}

On $2<|y|\le\sqrt N$, the real part is at most $-ay^2/2$.  Mills bounds give the two tail comparisons

\begin{equation}\label{eq:cubic-kernel-tails}
 \frac{\e^{-4a}}{2\sqrt{\pi a}}<\frac3{200},\qquad
 \frac{\e^{-2a}}{\sqrt{\pi a}}<\frac7{50}.
\end{equation}

By \eqref{eq:cubic-radial-errors}, the difference between the actual and reference exponents has modulus at most $3y^2/1000$.  Integrating the resulting difference of exponentials gives a normalized error below

\begin{equation}\label{eq:cubic-kernel-perturbation}
 \frac{(3/1000)\sqrt a}{2(a/2-3/1000)^{3/2}}<\frac3{500},
 \qquad\sqrt{\frac{\overline V}{2c}}>\frac{499}{500}.
\end{equation}

Combining the lower bounds,

\begin{equation}\label{eq:cubic-kernel-positive-budget}
 \mathscr I_{N,t}>
 \frac{499}{500}\left(1-\frac3{200}-\frac7{50}-\frac4{125}-\frac3{500}\right)
 =\frac{402693}{500000}>\frac45.
\end{equation}

Finally, $\Rea\mathscr K_{N,t}(y)\le-(a/2-3/1000)y^2$.  The square of the normalized absolute Gaussian integral is less than $(a+3/1000)/(a/2-3/1000)<9/4$.  All scalar inequalities above follow at $a\ge39/50$, $N\ge40$ by monotonicity and elementary positive exponential series bounds.

\end{proof}

\begin{proposition}\label{prop:cubic-unbounded-edge}

If $t\ge2$ and $N=n/t\ge40$, then at the shifted saddle in \eqref{eq:cubic-edge-saddles},

\begin{equation}\label{eq:cubic-edge-signs}
 \frac{c_d^{(3)}(n)}{\mathfrak B_3^\sharp}>1\quad(d\equiv4\pmod5),
 \qquad
 \frac{c_d^{(3)}(n)}{\mathfrak B_3^\sharp}<-3\quad(d\equiv2\pmod5).
\end{equation}

At the ordinary saddle,

\begin{equation}\label{eq:cubic-edge-ordinary-signs}
 \frac{c_d^{(3)}(n)}{\mathfrak B_3}>1\quad(d\equiv0,3\pmod5),
 \qquad
 \frac{c_d^{(3)}(n)}{\mathfrak B_3}<-2\quad(d\equiv1\pmod5).
\end{equation}

\end{proposition}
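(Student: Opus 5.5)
The plan is to extract the coefficient by Cauchy's formula on the circle $|q|=\e^{-1/N}=\e^{-t/n}$, normalize, and assemble the bounds already in hand. First we split the contour into the four principal arcs $|\theta-2\pi h/5|\le1/N$ and their complement. For $d\equiv2,4\pmod5$ we use the subtracted generating function $\mathscr S_n$ of \eqref{eq:cubic-subtraction}. The shifted saddle equation $d-5n=-3n^2F'(t)$ is exactly the relation making the linear term cancel once the factor $x_w=\e^{-5tw}$ from \eqref{eq:cubic-folded-error} is absorbed into the exponential. For $d\equiv0,1,3$ we use $Q_n^3$ itself at the ordinary saddle, with \eqref{eq:cubic-ordinary-error}.

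On the principal arcs, substitute $\theta=2\pi h/5+v/n$ with $|v|\le t$, i.e.\ $w=1-\ii v/t$. In the scaled variable $y=v\sqrt N/t$, $|y|\le\sqrt N$, the four arcs fold. After division by $\mathfrak B_3^\sharp$ (respectively $\mathfrak B_3$), the contribution becomes
\[
\sqrt{\tfrac{3\overline V}{2\pi}}\int_{-\sqrt N}^{\sqrt N}\e^{\mathscr K_{N,t}(y)}\{\beta_j+\epsilon(y)\}\,dy ,
\]
where $\beta_j=\mathfrak b_j(0)$ in the cancelling residues and $\beta_j=\gamma_{3,j}(0)$ otherwise. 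The normalizing constants must be checked to match: $\sqrt{6\pi n^3V}$ against $\sqrt{3\overline V/(2\pi)}$ with the Jacobian $t/(n\sqrt N)$ and $\overline V=t^3V$, together with the factor $\e^{-w/(2N)}$, whose imaginary part is the $\ii y/(2N^{3/2})$ already built into $\mathscr K_{N,t}$. The main term is then $\beta_j\mathscr I_{N,t}$. Lemma~\ref{lem:cubic-real-kernel} gives $\mathscr I_{N,t}>4/5$. The error term is at most $\sup|\epsilon|\cdot\frac32$, again by the absolute-integral bound of that lemma.

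Numerically, we have $u<1/20000$ and $N\ge40$. For $j=2,4$, \eqref{eq:cubic-folded-error} gives $|\epsilon_j|<17/20000+9/40+10^{-8}<0.226$. Then:
\begin{itemize}
\item For $j=4$: $\frac45\cdot\frac95-\frac32(0.226)-\frac1{40}>1.44-0.34-0.03>1$.
\item For $j=2$: $\mathfrak b_2(0)<-24/5$, so the value is below $-\frac45\cdot\frac{24}5+0.34+0.025<-3$.
\end{itemize}
For the ordinary residues, $|\epsilon^o_j|<10^{-3}$ and the complementary part is below $1/6400$. Since $\gamma_{3,0}(0)=(5-\sqrt5)/2>1.38$, $\gamma_{3,3}(0)=\sqrt5$, and $\gamma_{3,1}(0)=-(5+\sqrt5)/2<-3.6$:
\begin{itemize}
\item For $j=0,3$: $\frac45\cdot1.38-0.002>1$.
\item For $j=1$: $-\frac45\cdot3.6+0.002<-2$.
\end{itemize}
The complementary contour is handled by \eqref{eq:cubic-complement-budget}, which gives $<1/40$ in the shifted normalization and $<1/6400$ in the ordinary normalization.

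The main obstacle is bookkeeping rather than a new idea. We must verify that the arc radius $1/N$ used in Lemma~\ref{lem:cubic-modular-tail} matches the folded range $|v|\le t$, i.e.\ $|y|\le\sqrt N$, of $\mathscr I_{N,t}$, and that the $\e^{-w/(2N)}$ and $x_w$ factors are normalized consistently with $\mathfrak B_3^\sharp=u\mathfrak B_3$. There is also a subtlety: $\epsilon(y)$ is complex while $\mathscr I_{N,t}$ is real. We therefore bound the real part of the full integral from below by $\beta_j\mathscr I_{N,t}-\sup|\epsilon|\cdot\frac32$, using only the real part of the main term. This is legitimate because the coefficient is real.
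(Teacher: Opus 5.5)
Your proposal is correct and follows the paper's proof. Like the paper, you fold the four principal arcs into $\mathscr I_{N,t}$ and bound the amplitude error by $\frac32\sup|\epsilon|$ via Lemma~\ref{lem:cubic-real-kernel}. You add the complementary budgets $1/40$ and $1/6400$, then compare with $\frac45\mathfrak b_j(0)$ or $\frac45\gamma_{3,j}(0)$, with essentially the same numerical margins. One small difference: the paper bounds $\bigl|c_d^{(3)}(n)/\mathfrak B_3^\sharp-\mathfrak b_j(0)\mathscr I_{N,t}\bigr|$ directly by the triangle inequality, so your separate real-part argument is harmless but not needed.
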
\begin{proof}

The change of variable $v=y/\sqrt N$ in the four principal arcs gives the common real integral \eqref{eq:cubic-real-kernel}.  For $j=2,4$, equations \eqref{eq:cubic-folded-error}, \eqref{eq:cubic-complement-budget}, and Lemma~\ref{lem:cubic-real-kernel} imply

\begin{equation}\label{eq:cubic-edge-error-budget}
 \left|\frac{c_d^{(3)}(n)}{\mathfrak B_3^\sharp}
       -\mathfrak b_j(0)\mathscr I_{N,t}\right|
 <\frac32\left(\frac{17}{20000}+\frac9{40}+10^{-8}\right)+\frac1{40}
 =\frac{72755003}{200000000}<\frac38.
\end{equation}

The two signed margins are $(9/5)(4/5)-3/8=213/200>1$ and $-(24/5)(4/5)+3/8=-693/200<-3$.  For the ordinary classes, the error is less than

\begin{equation}\label{eq:cubic-edge-ordinary-budget}
 \frac32\left(\frac{10}{20000}+\frac9{800000}+10^{-8}\right)
 +\frac1{6400}<\frac1{100}.
\end{equation}

The limiting amplitudes in \eqref{eq:cubic-limiting-amplitudes} and the same positive real kernel prove \eqref{eq:cubic-edge-ordinary-signs}.

\end{proof}

\subsection{The cube: the intermediate range with vanishing amplitudes}

On a fixed compact interval there is no need to shift the saddle.  Nevertheless, replacing the two small amplitudes by a common constant bound would destroy the useful scale of their derivatives.  We therefore retain their series in $u=\e^{-5t}$.

\begin{proposition}\label{prop:cubic-middle}

Let $n\ge301$, $1/2\le t\le2$, and $d=-3n^2F'(t)$ be an integer of residue $j$ modulo five.  Then

\begin{equation}\label{eq:cubic-middle-result}
 s_j\Gamma_{3,j}(t)>0,\qquad
 \left|\frac{\mathcal C^{(3)}_{n,d}(t)}{\Gamma_{3,j}(t)}-1\right|
 <0.366338<\frac38.
\end{equation}

\end{proposition}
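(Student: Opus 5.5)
We follow the scheme of Proposition~\ref{prop:boundary-middle}, now for the cube with exponent $3n$, and normalize by $\mathcal C^{(3)}_{n,d}(t)$, which differs from $c_d^{(3)}(n)/\mathfrak B_3$ only by the factor $\e^{-t/(2n)}$. That factor must be absorbed in the same way as in \eqref{eq:boundary-H}, through $\e^{\ii v/(2n)}$.

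\emph{Signs of the amplitude.} The sign assertion $s_j\Gamma_{3,j}(t)>0$ on $[1/2,2]$ follows from the factorization \eqref{comm:amplitude-factorization} and the monotonicity of $U,W$. Since $t_{3,j}<1/2$, the classes $j=3,4$ are positive there, and \eqref{comm:amplitude-signs} handles the remaining classes. For the quantitative lower bound we split the amplitudes by size.
\begin{itemize}
\item For $j=0,1,3$, $\Gamma_{3,j}(t)=\gamma_{3,j}(u)$ with $u=\e^{-5t}\le\e^{-5/2}$. By \eqref{eq:cubic-limiting-amplitudes}, these stay near $\gamma_{3,j}(0)$ and are bounded away from zero by interval evaluation.
\item For $j=2,4$ we write $\Gamma_{3,j}(t)=u\,\mathfrak b_j(u)$ and keep the factor $u$ explicitly. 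All errors are then measured relative to $u|\mathfrak b_j(u)|$.
\end{itemize}

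\emph{Principal arcs.} On the four arcs $|v|\le t/4$ we use the product formula \eqref{eq:boundary-product}, cubed: $Q_n^3=\e^{3nF(z)-z/(2n)}\lambda_h(z)^3(1+\epsilon_h)^3$. The folded amplitude is then $\sum_h\zeta^{-hj}\lambda_h(z)^3=\gamma_{3,j}(\e^{-5z})$. We define $\mathscr H_{3,j}(v)=\e^{\ii v/(2n)}\Gamma_{3,j}(t-\ii v)/\Gamma_{3,j}(t)$ and bound its first two derivatives as in \eqref{eq:boundary-h12}.

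For $j=2,4$ we must not bound derivatives by the absolute sum over $h$ divided by $g$. Instead we differentiate $\gamma_{3,j}(x)=x\mathfrak b_j(x)$ with $x=\e^{-5(t-\ii v)}$. This gives $|\mathscr H'|,|\mathscr H''|\lesssim 5+25+O(u)$ uniformly, using the majorant series of \eqref{eq:cubic-amplitude-majorants} (ratio $22/15$) to bound $\mathfrak b_j'$ and $\mathfrak b_j''$ on $|x|\le\e^{-5/2}$.

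The inner Gaussian estimate is \eqref{eq:boundary-middle-inner} with $\nu=3n$. Here $\mathscr H'(0)$ is again purely imaginary because the amplitudes are real on the axis. The Gaussian tail term $E_{\rm G}$ carries over unchanged.

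\emph{Outer parts, product error, complementary arcs.} The outer parts $1/4\le|v/t|\le2$ are handled by the phase inequalities from \cite[Lemma 7.2]{Ma} and the estimate of \eqref{eq:boundary-middle-outer-two}, with $k$ replaced by $3cn/t$. The product error becomes $E_{\rm prod}$, with $12$ replaced by roughly $4\cdot 7\,\ell^{3/2}\epsilon/g$.

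The complementary arcs require care for $j=2,4$. There $g$ is of size $u\ge\e^{-10}$, so the far bound must be paid relative to $u$. We use \eqref{eq:boundary-finite-tail} cubed, giving $8\e^{3nF(t)-3bN}$. Since $N\ge150$, the decay $\e^{-cN}$ overwhelms both $\e^{10}$ and $n^{3/2}$, and it is checked at $n=301$ by monotonicity in $n$.

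\emph{Main obstacle and final step.} The main obstacle is the inner relative error for $j=2,4$ near $t=2$. There $g=u|\mathfrak b_j|$ is tiny, while the $O(1/n)$ error from $\epsilon_h$ in \eqref{eq:boundary-product} is proportional to $x|z|/n$. That term carries only the factor $x\approx u$, so it stays relative. The genuine difficulty is that the cross terms among the four $h$ do not separately carry the factor $u$. We therefore write $(1+\epsilon_h)^3=1+3b_h/n+\dots$ and fold $\sum_h\zeta^{-hj}\lambda_h^3\,b_h$ as a series in $x$. This sum also vanishes at $x=0$, by the same cancellation \eqref{eq:cubic-jacobi}, so it is $O(u)$.

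Finally, on each cell $[i/1000,(i+1)/1000]$ of $[1/2,2]$ we bound all terms at $n=301$, using \eqref{eq:boundary-li2-tail} for the dilogarithms and rational endpoints. We check that the total is below $0.366338$ and that the $n$-dependence is monotone, exactly as for Table~\ref{tab:boundary-middle}.
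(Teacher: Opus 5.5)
Your overall architecture is the paper's, and your sign argument via \eqref{comm:amplitude-factorization} is fine. You measure relative errors against $\Gamma_{3,j}(t)$, keep the factor $u=\e^{-5t}$ in $\Gamma_{3,j}$ and its $v$-derivatives for $j=2,4$, and note that the eta and far-arc errors are harmless because $u\ge\e^{-10}$ and $N\ge301/2$. However, the obstacle you single out is not real. In \eqref{eq:boundary-product} every $\epsilon_h$ already carries the factor $x=u$ on the vertical line, except for $4u_*\le4\e^{-6cN/5}$, which is negligible even after division by $u$. The four arcs are simply added, so there are no cross terms between different $h$, and nothing must be folded for the factor $u$ to survive. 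Also, $b_h$ vanishes at $x=0$ termwise, not by \eqref{eq:cubic-jacobi}. The paper just bounds $4\lambda(\e^\delta-1)/|\Gamma_{3,j}(t)|$ termwise.

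The genuine gap is quantitative, and it sits at $t=2$, $n=301$, $j=2,4$. There $\Gamma_{3,j}(t-\ii v)\approx\mathfrak b_j(0)\e^{-5t}\e^{5\ii v}$, so $h_2\approx25$, which is your own ``$25$''. With $V(2)\approx0.0657$ and $\nu=903$, the term $h_2b^{-3/2}/(2V\nu)$ alone is about $0.23$, and the whole core error is about $0.33$. This is not an artifact, since the true relative deviation is about $1-\e^{-25/(6nV)}\approx0.19$. That leaves roughly $0.03$ for three things: the product error, the outer arcs, and the shift $t/(2n)\le1/301$. You incur that shift when returning from $c_d^{(3)}(n)/\mathfrak B_3$ to $\mathcal C^{(3)}_{n,d}$, because the real factor $\e^{-t/(2n)}$ is not absorbed by $\e^{\ii v/(2n)}$.

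The product error you actually budget is \eqref{eq:boundary-product} cubed on arcs $|v|\le2t$, with $|\epsilon_h|\le u(\sqrt5/N+40/N^2)$ and multiplier $4\cdot7$. Relative to $|\Gamma_{3,4}(2)|\approx1.854u$ this is about $0.26$, and still about $0.11$ with multiplier $4\cdot3$. So your final numerical check fails, even against $3/8$.

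The paper closes this in three ways:
\begin{itemize}
\item It applies Euler--Maclaurin only to the tail, after the eta identity. This gives the sharper $(6/5)|w|u/N$ of \eqref{eq:cubic-tail-EM-budget} and avoids the factor $2.5$ lost in \eqref{eq:boundary-product} through $|\e^w-1|\le2|w|$ and rounding.
\item It shrinks the principal arcs to $|s|\le t$. This is possible because Lemma~\ref{lem:cubic-modular-tail} controls the complement of arcs of angular radius $1/N$, whereas \eqref{eq:boundary-finite-tail} needs radius $2/N$.
\item It bounds the outer parts by the exact phase loss $\mathscr D$.
\end{itemize}
Together these give $E_{\rm prod}\approx0.026$ and the tabulated $0.363016$, to which $1/301$ is then added. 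Folding the first-order term $3b_h/n$ and integrating it against the Gaussian could be an alternative sharpening. But you would have to quantify it and put it in the budget in place of the crude bound you state.
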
\begin{proof}

Write

\begin{equation}\label{eq:cubic-middle-series}
 \gamma_{3,j}(x)=\sum_{k\ge0}g_{j,k}x^k,
 \qquad |g_{j,k}|\le4\frac{(12/5)_k}{k!}.
\end{equation}

All coefficients are real by conjugate pairing of the four terms.  They are obtained by multiplying four binomial series: for $e_s=3/2-3s/5$, the coefficients of $(1-\zeta^{-hs}x)^{e_s}$ obey

\begin{equation}\label{eq:cubic-binomial-recurrence}
 d_{h,s,0}=1,\qquad
 d_{h,s,k}=\frac{k-1-e_s}{k}\zeta^{-hs}d_{h,s,k-1}\qquad(k\ge1).
\end{equation}

The four products are then weighted by $\zeta^{-hj}\omega_h^3$ and summed.  In particular, $g_{2,0}=g_{4,0}=0$ is imposed as the exact algebraic identity \eqref{eq:cubic-limiting-amplitudes}.  It is never replaced by a nonzero constant error.

For $r=0,1,2$ let

\begin{equation}\label{eq:cubic-middle-derivative-series}
 M_{r,j}(t)=\sum_{k\ge0}|g_{j,k}|(5k)^r u^k,
 \qquad (5k)^0=1.
\end{equation}

These bound the modulus of the corresponding derivatives on the vertical line through $t$.  For $j=2,4$ every sum has a factor $u$.  Truncation at $K=36$ leaves a tail bounded by

\begin{equation}\label{eq:cubic-middle-series-tail}
\begin{aligned}
 \mathscr T_{r,K}(u)
 &=\frac{4(12/5)_{K+1}}{(K+1)!}
       \frac{(5K+5)^r u^{K+1}}{1-q_{r,K}(u)},\\
 q_{r,K}(u)&=u\left(1+\frac7{5(K+2)}\right)
          \left(\frac{K+2}{K+1}\right)^r<1.
\end{aligned}
\end{equation}

This follows from the ratio of successive majorant coefficients.  The same tail with $r=0$, applied with both signs to the truncated signed series, encloses $\Gamma_{3,j}(t)$.  For interval evaluation the upper endpoint of $u$ is used in \eqref{eq:cubic-middle-series-tail}.

For the coefficient integral, use temporarily

\begin{equation}\label{eq:cubic-middle-normalization}
 \widehat{\mathcal C}^{(3)}_{n,d}(t)
 =\e^{t/(2n)}\mathcal C^{(3)}_{n,d}(t)
 =\frac{c_d^{(3)}(n)}{\mathfrak B_3(t,n,d)},\qquad \nu=3n.
\end{equation}

The finite-product leading term contains $\e^{-z/(2n)}$.  Thus on the inner arc $|s|\le t/4$ the normalized amplitude is $\e^{\ii s/(2n)}\Gamma_{3,j}(t-\ii s)/\Gamma_{3,j}(t)$.  Its first two derivative bounds are

\begin{equation}\label{eq:cubic-middle-h12}
 h_1=\frac{M_{1,j}}{|\Gamma_{3,j}(t)|}+\frac1{2n},\qquad
 h_2=\frac{M_{2,j}+M_{1,j}/n+M_{0,j}/(4n^2)}{|\Gamma_{3,j}(t)|}.
\end{equation}

Use $M_3,M_4$ from \eqref{eq:boundary-derivatives}; these bound derivatives of the common radial function, independently of the power.  With $V=V(t)$ and

\begin{equation}\label{eq:cubic-middle-envelope}
 b=1-\frac{M_4(t/4)^2}{12V}>0,\qquad
 U_0=\frac{t\sqrt{3nV}}4,
\end{equation}

the same conjugate-pair argument as in \eqref{eq:boundary-middle-inner} gives

\begin{equation}\label{eq:cubic-middle-core}
\begin{aligned}
 E_{\rm core}&=\frac1{3n}\left\{
 \frac{M_4}{8V^2}b^{-5/2}+\frac{5M_3^2}{24V^3}b^{-7/2}
 +\frac{h_1M_3}{2V^2}b^{-5/2}+\frac{h_2}{2V}b^{-3/2}\right\},\\
 E_{\rm G}&=\frac{2\e^{-U_0^2/2}}{\sqrt{2\pi}U_0}.
\end{aligned}
\end{equation}

For clarity, the phase after putting $U=s\sqrt{3nV}$ is $-U^2/2+R+\ii\Psi$, where $|R|\le M_4U^4/(72nV^2)$ and $|\Psi|\le M_3|U|^3/(6\sqrt{3n}V^{3/2})$.  The four terms in $E_{\rm core}$ are respectively the fourth, sixth, fourth and second Gaussian moments; the purely imaginary linear amplitude term is paired before taking an absolute bound.

To estimate $t/4\le|s|\le t$, put $v=s/t$.  The corrected expansion \eqref{eq:cubic-scaled-radial} gives the exact phase loss

\begin{equation}\label{eq:cubic-middle-phase-loss}
\begin{aligned}
 \mathscr D(t,v)&=G_t(1)-\Rea G_t(1-\ii v)\\
 &=\frac{cv^2}{1+v^2}
 +\sum_{k\ge1}a_ku^k
   \frac{1+v^2-\cos(5ktv)+v\sin(5ktv)}{1+v^2}.
\end{aligned}
\end{equation}

On $1/4\le v\le1$, truncating after $k=24$ leaves an absolute remainder at most

\begin{equation}\label{eq:cubic-middle-phase-tail}
 \frac85\frac{u^{25}}{25^2(1-u)}.
\end{equation}

Indeed, $|a_k|\le4/(5k^2)$ and the remaining trigonometric factor has modulus at most two.  On each closed interval $I=[t_-,t_+]$, divide $[1/4,1]$ into eighty equal closed intervals $J_\ell$.  Let $L_\ell$ be a positive lower bound for $\mathscr D$ on $I\times J_\ell$, obtained from the twenty-four terms and \eqref{eq:cubic-middle-phase-tail}.  If $V_+$ bounds $V$ on $I$, put

\begin{equation}\label{eq:cubic-middle-outer}
 \mathscr O_I=2t_+\sqrt{\frac{3nV_+}{2\pi}}
       \sum_{\ell=1}^{80}\frac3{320}\e^{-3nL_\ell/t_+},
 \qquad E_{{\rm out},j}\le
       \mathscr O_I\frac{M_{0,j}}{|\Gamma_{3,j}(t)|}.
\end{equation}

All quantities on the right are replaced by their uniform bounds on $I$.  In particular, the normalized absolute phase integral on $|s|\le t$ is at most $b^{-1/2}+\mathscr O_I$.

The tail Euler--Maclaurin estimate \eqref{eq:cubic-tail-EM-budget} remains valid here without its final simplification.  Denote that bound by $\delta$ and put $\lambda=\exp\{(12/5)u/(1-u)\}$.  Together with the eta bound in \eqref{eq:cubic-eta-error}, this gives

\begin{equation}\label{eq:cubic-middle-product}
 E_{{\rm prod},j}\le
 \frac{4\lambda\{\e^\delta-1+8\e^{\delta-3cN}\}}
 {|\Gamma_{3,j}(t)|}\left(b^{-1/2}+\mathscr O_I\right).
\end{equation}

The tail error retains its factor $u$.  The eta error need not have this factor on the present compact range, since $N\ge301/2$ and $u\ge\e^{-10}$.

For the rest of the circle, a direct finite-tail estimate is

\begin{equation}\label{eq:cubic-middle-tail-log}
 |\log T_{3,n}(q)|\le\left(\frac{12N}{5}+12\right)\frac{u}{1-u},
 \qquad 3\{c-tF(t)\}\le\frac{3u}{5(1-u)}.
\end{equation}

The first inequality follows by summing the four residue progressions and using $1/(1-\e^{-5/N})\le1+N/5$.  As $N\ge120$, Lemma~\ref{lem:cubic-modular-tail} now gives

\begin{equation}\label{eq:cubic-middle-complement}
\begin{aligned}
 g(t)&=\frac32c-\frac{31}{10}\frac{u}{1-u}>0,\\
 E_{{\rm far},j}&\le
 \frac{\sqrt{6\pi n^3V(t)}}{|\Gamma_{3,j}(t)|}
 \exp\left\{-g(t)N+\frac3{100}+\frac1{2N}\right\}.
\end{aligned}
\end{equation}

It remains to record the uniform arithmetic bounds.  Use the three hundred rational intervals $I=[k/1000,(k+5)/1000]$, $k=500,505,\ldots,1995$, together with the eighty angular intervals above.  Formula \eqref{eq:cubic-binomial-recurrence}, the tails \eqref{eq:cubic-middle-series-tail} and \eqref{eq:cubic-middle-phase-tail}, and the elementary dilogarithm tail \eqref{eq:boundary-li2-tail} supply closed interval bounds for every displayed quantity.  In the radial function it is sufficient to retain thirty-six dilogarithm terms, with tail $u^{37}/(37^2(1-u))$.  The variance can also be evaluated by

\begin{equation}\label{eq:cubic-middle-variance-evaluation}
 Z(t)=\sum_{k=0}^4u^k,\qquad
 F'(t)=\frac{\log Z(t)-F(t)}t,\qquad
 V(t)=\frac{-5\sum_{k=0}^4ku^k/Z(t)-2F'(t)}t.
\end{equation}

On every cell the bounds give $b>0$, the stated signs of the five amplitudes, $3nL_\ell/t_+>1/2$, and $g(t)N>3/2$ already at $n=301$.  Negative powers of $n$ decrease; these last two inequalities also make all exponential envelopes decrease for larger $n$.  Thus the bounds obtained at $n=301$ hold for every $n\ge301$.  Table~\ref{tab:cubic-middle} gives upper bounds for the complete relative error in the temporary normalization.  Each sum is bounded on a single cell before taking its maximum over the cells.

\begin{table}[htbp]
\centering
\begin{tabular}{c c}
\hline
$j$ & $E_{\rm core}+E_{{\rm out},j}+E_{\rm G}+E_{{\rm prod},j}+E_{{\rm far},j}$\\
\hline
$0$ & $<0.105057$\\
$1$ & $<0.095945$\\
$2$ & $<0.346375$\\
$3$ & $<0.100046$\\
$4$ & $<0.363016$\\
\hline
\end{tabular}

\caption{Uniform relative errors for the cube on $1/2\le t\le2$.}\label{tab:cubic-middle}

\end{table}

The largest unrounded bound is below $0.363015557662851$.  Returning from \eqref{eq:cubic-middle-normalization} to the common normalization adds at most $t/(2n)\le1/301$.  Therefore the complete relative error is less than $0.366337816799064<3/8$, as asserted.

\end{proof}

\subsection{The cube: joining the edge for every finite product}

Define

\begin{equation}\label{eq:cubic-sigma}
 \sigma=-3F'(2)=0.197317176514472245349894\ldots,
 \qquad0<\sigma<\frac15.
\end{equation}

The strict upper bound follows from \eqref{eq:boundary-degree-interface}: $\sigma<3c/4=\pi^2/50<1/5$.  The displayed decimal can be enclosed by the elementary dilogarithm series.

\begin{proposition}\label{prop:boundary-cubic}

For every $n\ge1$, the following hold:

\begin{equation}\label{eq:cubic-all-n-edge}
\begin{aligned}
 s_{d\bmod5}c_d^{(3)}(n)&>0
 &&(0\le d\le\sigma n^2,\ d\equiv0,1,3\pmod5),\\
 c_d^{(3)}(n)&=0
 &&(0\le d<5n,\ d\equiv2,4\pmod5),\\
 s_{d\bmod5}c_d^{(3)}(n)&>0
 &&(5n<d\le5n+\sigma n^2,\ d\equiv2,4\pmod5).
\end{aligned}
\end{equation}

The last strict signs also hold throughout $5n<d<10n$, independently of the quadratic cutoff.  The corresponding upper-edge conclusions follow by reciprocity.

\end{proposition}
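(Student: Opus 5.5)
We split the degree range and, in each piece, cite the earlier result that covers it.

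\emph{Cancelling residues.} The second line of \eqref{eq:cubic-all-n-edge} is exactly the first line of Lemma~\ref{lem:cubic-linear}, valid for every $n\ge1$. The same lemma gives $c^{(3)}_{5(n+r)+4}(n)=3[X^r]\mathscr U>0$ and $c^{(3)}_{5(n+r)+2}(n)=-3[X^r]\mathscr V<0$ for $0\le r<n$. This is the stated independent claim on $5n<d<10n$, since $s_4=1$ and $s_2=-1$. Because $\sigma<1/5$, we have $5n+\sigma n^2<10n$ exactly when $\sigma n<5$, i.e.\ $n\le25$. So for $n\le 25$ the third line is already covered. For larger $n$ and $10n\le d\le 5n+\sigma n^2$, I would use the shifted saddle of Proposition~\ref{prop:cubic-unbounded-edge}. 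Writing $d-5n=-3n^2F'(t)$, the bound $d-5n\le\sigma n^2=-3n^2F'(2)$ and the strict monotonicity of $F'$ give $t\ge2$. The hypothesis $N=n/t\ge40$ comes from $-t^2F'(t)<c$ in \eqref{eq:boundary-degree-interface}: $d-5n<3cN^2$, and $d-5n\ge 5n$ forces $N$ large once $n$ is large. Then \eqref{eq:cubic-edge-signs} gives the signs. The leftover pairs $(n,d)$ with $N<40$ form a finite set, which I would hand to the exact recurrence of Lemma~\ref{lem:finite}, as in \eqref{eq:boundary-finite-interface}.

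\emph{Ordinary residues $0,1,3$.} For $d<10n$ I would use the first-layer tail \eqref{eq:cubic-first-tail} together with the dissection \eqref{eq:cubic-dissection}. For $d<5n$ the coefficients are those of $A$, $-B$, $D$. Here $A=G^6-3XGH^5$ is positive via $G\succeq H$ and an insertion argument like \eqref{eq:boundary-insertion}, while $B$ and $D$ are manifestly nonnegative. For $5n\le d<10n$ the tail layer adds $3X^n(\cdots)/(1-X)$ terms. These are to be dominated by insertion inequalities, exactly as in Lemma~\ref{lem:boundary-linear}.

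For $d\ge10n$ (so $n$ is large) with $d\le\sigma n^2$, the ordinary saddle has $t\ge2$, and $N\ge40$ holds by the same interface estimate. Proposition~\ref{prop:cubic-unbounded-edge}, through \eqref{eq:cubic-edge-ordinary-signs}, then gives the signs. Again only finitely many small $(n,d)$ remain, and these go to Lemma~\ref{lem:finite}.

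\emph{Main obstacle.} The hard step is the intermediate ordinary range $5n\le d<10n$ with $n$ too small for the saddle bound, together with making the thresholds meet. Concretely, we need $10n\le d$ to imply $N\ge40$ whenever $n$ exceeds a modest bound; with $d<3cN^2$ this requires $N^2>10n/(3c)$. If the combinatorial dominance in residues $0,1,3$ becomes messy, I would extend the finite verification to cover all $n$ up to that bound. In every case the last line of the statement follows from \eqref{eq:reciprocity}.
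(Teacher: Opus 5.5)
Your treatment of residues $2,4$ is correct and matches the paper. The zero block and the range $5n<d<10n$ come from Lemma~\ref{lem:cubic-linear}. The range $10n\le d\le5n+\sigma n^2$ is handled by the shifted saddle, where $5n\le d-5n<3cN^2$ forces $N>40$ once $n\ge256$. The leftover $n\le255$ go to Lemma~\ref{lem:finite}.

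\textbf{The gap: residues $0,1,3$ with $d<10n$.} You send every such degree to a combinatorial argument. Because $10n$ is unbounded, that argument must hold for every $n$, but you only assert it.
\begin{itemize}
\item Strict positivity of $A=G^6-3XGH^5$ in every degree does not follow from $G\succeq H$, which gives $G^6\succeq GH^5$ but not $G^6\succ 3XGH^5$.
\item \eqref{eq:boundary-insertion} bounds a coefficient below by sums of coefficients of index less than half of it. It therefore cannot produce a factor $3$ at a shift of one degree.
\item The tail layer is not ``exactly as in Lemma~\ref{lem:boundary-linear}''. In residue one you need $[X^m]B>3\sum_{u\le m-n}[X^u](A+XD)$. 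Write $\rho_u=[X^u]G^5H$ and $m=n+r$. The natural comparisons $B\succeq3G^5H$, $G^6\preceq(1+X)G^5H$ and $G^3H^3\preceq G^5H$ then require $\rho_m>\rho_r+7\sum_{u<r}\rho_u$. Six-row insertion gives only $\rho_m\ge6\sum_{u<r}\rho_u+5\rho_r$, so the argument does not close as sketched.
\end{itemize}
Your fallback, enlarging the finite verification up to a bound on $n$, cannot repair this. The uncovered ordinary degrees $d<10n$ occur for every $n$.

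\textbf{The missing observation: no split at $10n$ is needed for the ordinary classes.}
\begin{itemize}
\item Every $0<d\le\sigma n^2$ has ordinary saddle $t\ge2$.
\item From $d<3cN^2$, the failure $N<40$ forces $d<3c\cdot40^2<1265$, a bound independent of $n$.
\item Hence Proposition~\ref{prop:cubic-unbounded-edge} applies to every $d\ge1265$, whether or not $d<10n$.
\item For $d\le1264$ and $n\ge256$, the first missing factor has degree $5n+1>1264$. So $c_d^{(3)}(n)=[q^d]S$, and the signs of these finitely many coefficients of the infinite product are checked in Lemma~\ref{lem:finite}.
\item For $n\le255$, that lemma checks all ordinary degrees $d\le\lfloor n^2/5\rfloor$, which includes every integer $d\le\sigma n^2$.
\end{itemize}
This is how the paper argues. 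With it, you need neither positivity of $A$ nor any tail-layer domination in residues $0,1,3$.
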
\begin{proof}

First consider $d\equiv2,4\pmod5$.  Degrees below $10n$ are covered by Lemma~\ref{lem:cubic-linear}.  If $10n\le d\le5n+\sigma n^2$, the shifted saddle has $t\ge2$.  The integration-by-parts inequality in \eqref{eq:boundary-degree-interface} gives

\begin{equation}\label{eq:cubic-shifted-interface}
 5n\le d-5n<3cN^2,\qquad
 n\ge256\ \Longrightarrow\ 5n\ge1280>3c\cdot40^2.
\end{equation}

Hence $N>40$ and Proposition~\ref{prop:cubic-unbounded-edge} applies.  For $1\le n\le255$, Lemma~\ref{lem:finite} checks these two residues up to $5n+\lfloor n^2/5\rfloor$, a larger range than required.

For the ordinary classes and $0<d\le\sigma n^2$, the ordinary saddle has $t\ge2$.  If $N\ge40$, apply the same proposition.  Otherwise,

\begin{equation}\label{eq:cubic-stable-interface}
 d<3c\cdot40^2<1265,\qquad d\le1264.
\end{equation}

When $n\ge256$, the first missing factor has degree $5n+1\ge1281$, so these coefficients equal those of $S$.  Their signs are included in the independent Jacobi prefix of Lemma~\ref{lem:finite}.  When $1\le n\le255$, that lemma checks all ordinary residues through $\lfloor n^2/5\rfloor$.  The constant coefficient is one.  Finally, \eqref{eq:reciprocity} supplies the upper edge.  This proves all assertions.

\end{proof}

The interface with Proposition~\ref{prop:cubic-middle} is explicit.  Ordinary saddles with $t\le2$ cover $d\ge\sigma n^2$, whereas the shifted edge for residues two and four extends to $5n+\sigma n^2$.  The two ranges overlap by $5n$ in the actual degree.  Thus the cancellation does not leave an interval between the edge and the ordinary central analysis.

\section{Finite ranges and completion of the proof}\label{sec:completion}

\begin{lemma}\label{lem:finite}

Theorem~\ref{thm:main} holds for \(1\le n\le300\). The additional finite ranges needed in Section~\ref{sec:boundary} have the following signs.

For the square, every degree in

\begin{equation}\label{eq:finite-ranges}
 1\le n\le529,\qquad0\le d\le\min\{5299,\lfloor2n^2/15\rfloor\}
\end{equation}

has the pattern \((+,-,-,+,+)\) according to its residue, except for the zero at \(d=9\), whenever it occurs. For the cube and \(1\le n\le255\),

\begin{equation}\label{eq:cubic-finite-ranges}
\begin{aligned}
 &0\le d\le\lfloor n^2/5\rfloor,\quad d\equiv0,1,3\pmod5:
 &&\sgn c_d^{(3)}(n)=\begin{cases}-1&d\equiv1\pmod5,\\1&d\equiv0,3\pmod5;\end{cases}\\
 &5n\le d\le5n+\lfloor n^2/5\rfloor,\quad d\equiv2,4\pmod5:
 &&\sgn c_d^{(3)}(n)=\begin{cases}-1&d\equiv2\pmod5,\\1&d\equiv4\pmod5.\end{cases}
\end{aligned}
\end{equation}

The initial cubic blocks satisfy \eqref{eq:cubic-zero-blocks}. The infinite cubic product, through degree \(1264\), has, in residue classes \(0,1,3\), the strict signs \((+,-,+)\).

\end{lemma}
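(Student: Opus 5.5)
This lemma is a finite verification, so the plan is an exact integer computation rather than an analytic argument. I would compute the coefficients \(c_d^{(p)}(n)\) exactly, using arbitrary-precision integers, through a recurrence in \(n\). Write \(P_n(q)=Q_n(q)^p\). Passing from \(n\) to \(n+1\) multiplies \(P_n\) by \(\prod_{a=1}^4(1-q^{5n+a})^p\). Each factor \((1-q^e)\) is applied in place by the update \(c_d\leftarrow c_d-c_{d-e}\), with \(d\) running downward. Each product is therefore built from the previous one with \(O(\deg)\) additions per factor. The reciprocity \eqref{eq:reciprocity} lets me store only half of each polynomial, and it also serves as a correctness check.

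\textbf{First part: the full theorem for \(1\le n\le300\).} For each such \(n\) I would take the full polynomial, of degree \(10pn^2\le 2.7\cdot10^6\), and verify the following directly:
\begin{enumerate}
\item every \(c_{5m}^{(p)}(n)\) is positive;
\item in classes \(3\) and \(4\), the sign sequence with zeros deleted is of the form \(+\cdots+-\cdots-\);
\item the zero sets match the statement, namely \(c_9^{(2)}(n)=0\) for \(n\ge2\) and the blocks \eqref{eq:cubic-zero-blocks}.
\end{enumerate}
By reciprocity, classes \(1\) and \(2\) then need no separate check. The main obstacle is cost. The cube at \(n=300\) has degree \(2.7\cdot10^6\) and coefficients of roughly \(10^5\) digits, and running the recurrence over all \(n\) is expensive. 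To keep this manageable, I would work modulo several large primes, with CRT reconstruction only where signs are needed. The cleaner route is a floating enclosure: maintain a certified bound on the magnitude, say via the normalized saddle size, and use multimodular exact arithmetic only to decide the sign of coefficients that are small relative to that scale. In practice I expect plain big-integer arithmetic over the reachable range to suffice, possibly in a distributed computation. The certificate is the recorded last positive index \(K_{p,j}(n)\) together with the list of zeros.

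\textbf{Second part: the square for \(n\le529\).} Only degrees \(d\le\min\{5299,\lfloor2n^2/15\rfloor\}\) are needed. Since the first missing factor has degree \(5n+1\), the coefficients of \(Q_n^2\) agree with those of \(Q_\infty^2\) below that degree. So I truncate the products at degree \(5300\) and compute \(Q_n^2\bmod q^{5300}\) for each \(n\le529\) by the same recurrence, applied only to the low-degree part. This is cheap. I then check the signs \((+,-,-,+,+)\) and confirm that the only zero is at \(d=9\).

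\textbf{Third part: the cube.} For \(n\le255\) I compute \(Q_n^3\) modulo \(q^{5n+\lfloor n^2/5\rfloor+1}\) and check the residue-class signs on the two stated ranges. As cross-checks, I confirm the zero blocks against Lemma~\ref{lem:cubic-linear} and the values \(3[X^r]\mathscr U\) and \(-3[X^r]\mathscr V\). For the infinite cubic product through degree \(1264\), I would use the Jacobi series \eqref{eq:cubic-jacobi} divided by \((q^5;q^5)_\infty^3\), expanded exactly to degree \(1264\), and read off the signs in classes \(0,1,3\).
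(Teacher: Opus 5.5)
Your proposal is correct and is essentially the paper's proof: the same exact integer recurrence with the descending in-place update $a_d\leftarrow a_d-a_{d-k}$, run stage by stage through $n=300$ with only classes $0,3,4$ inspected, truncated modulo $q^{M+1}$ for the square prefix ($M=5299$, $n\le529$) and for the cubic edge ranges, with reciprocity available to halve the storage. The differences are inessential. You build the infinite cubic prefix from Jacobi's series times $1/(q^5;q^5)_\infty^3$ rather than multiplying the factors $1-q^k$, $5\nmid k\le1264$, three times each; both are exact. Your cost worry is also overstated: $|c_d^{(p)}(n)|\le 2^{4pn}$ caps the cube at $n=300$ at about $1100$ digits, not $10^5$, so plain big-integer arithmetic suffices without any multimodular or floating enclosure.
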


\begin{proof}

All the finite statements follow from an integer recurrence, which we specify explicitly. Start with \(a_0=1\), and for \(d>0\) set \(a_d=0\). At stage \(n\), insert each of the four factors with exponents \(k=5n-4,5n-3,5n-2,5n-1\), repeating each insertion \(p\) times. If \(D\) is the current degree, insertion of \(1-q^k\) is the update

\begin{equation}\label{eq:integer-recurrence}
 a_d\longleftarrow a_d-a_{d-k}\qquad(d=D+k,D+k-1,\ldots,k).
\end{equation}

Before the update, coefficients beyond \(D\) are zero. Descending order preserves the old values on the right. Induction over the factors shows that the resulting array at stage \(n\) is exactly the coefficient array of \(Q_n^p\). The same induction applies after truncation modulo \(q^{M+1}\), giving every prefix through degree \(M\).

For each power run through \(n=300\), inspect residue classes \(0,3,4\). Every coefficient in the first class is positive. In each of the other two classes, deleting zeros gives one sign change, with positive initial sign and negative final sign. The numbers of inspected positions are

\begin{equation}\label{eq:finite-count}
 \sum_{n=1}^{300}(6pn^2+1)
 =\begin{cases}108540900,&p=2,\\162811200,&p=3.\end{cases}
\end{equation}

For the square prefix, take \(M=5299\) and run through \(n=529\). Inspecting precisely \eqref{eq:finite-ranges} gives

\begin{equation}\label{eq:prefix-count}
 \sum_{n=1}^{529}\bigl(1+\min\{5299,\lfloor2n^2/15\rfloor\}\bigr)=2102022
\end{equation}

positions, with the stated zero as the only exception to the sign pattern. For the cubic edge, the two lines of \eqref{eq:cubic-finite-ranges} give respectively \(667356\) and \(444733\) positions. The initial zero blocks contribute a further \(65280\) positions. The first nonzero degenerate terms are

\begin{equation}\label{eq:cubic-first-edge}
 c_{5n+2}^{(3)}(n)=-6,\qquad c_{5n+4}^{(3)}(n)=9.
\end{equation}

For the infinite cubic prefix, take \(M=1264\) and multiply all factors with \(5\nmid k\) and \(k\le1264\), three times each. Factors of larger degree cannot affect this prefix. All the claimed signs result directly from integer comparisons. The ancillary transition tables give every last positive index and the exact neighboring transition coefficients; the prefix and edge tables record the remaining finite ranges.

The recurrence can also be evaluated using half the coefficients: multiplication by \(1-q^k\) reverses reciprocal symmetry, so the missing half of every intermediate product is determined before the next update. Independent full-polynomial multiplication, using ordinary convolution for the square, gives the same transition coefficients and signs for \(n\le20\) in the square case and \(n\le30\) in the cubic case.

\end{proof}

\begin{proof}[Proof of Theorems~\ref{thm:main} and \ref{thm:location}]

Lemma~\ref{lem:finite} settles \(n\le300\). Fix \(n\ge301\), \(p\in\{2,3\}\), and a degree \(0<d\le5pn^2\), and choose its ordinary saddle by \eqref{eq:ordinary-saddle}. Write

\begin{equation}\label{eq:global-windows}
 w_2=3,\qquad w_3=6,\qquad
 \mathcal W_{p,j,n}=\{m\in\mathbb Z:|m-\alpha_{p,j}n^2|\le w_pn\}.
\end{equation}

If \(t\ge1/2\), Propositions~\ref{prop:boundary-square}, \ref{prop:boundary-cubic}, and \ref{prop:cubic-middle}, together with Lemma~\ref{lem:finite}, give positive signs in residue classes zero and three, negative signs in class one, and the asserted signs or initial zeros in classes two and four. For the square the only zero in this range is \(d=9\); for the cube they are exactly the initial blocks in \eqref{eq:cubic-zero-blocks}.

All these degrees precede the transition windows. Indeed, the radial estimate \(-t^2F'(t)<c<4/15\) and the enclosures of the critical constants give

\begin{equation}\label{eq:outer-before-window}
 \begin{aligned}
 &\mu_2(1/2)<\frac12,\qquad 1<\alpha_{2,j}<\frac65,\\
 &\mu_3(1/2)<\frac34,\qquad \frac{19}{10}<\alpha_{3,j}<\frac{11}{5},\\
 &n^2\mu_p(1/2)-\frac j5<\alpha_{p,j}n^2-w_pn.
 \end{aligned}
\end{equation}

If \(0\le t\le1/2\), Proposition~\ref{prop:compact-window} gives strict positivity in residue class zero, strict negativity in classes one and two, and the prescribed signs in classes three and four outside \(\mathcal W_{p,j,n}\). The windows lie inside the lower half, since

\begin{equation}\label{eq:windows-interior}
 \alpha_{p,j}-\frac{w_p}{n}>0,\qquad
 \alpha_{p,j}+\frac{w_p}{n}+\frac{j}{5n^2}<p.
\end{equation}

Within each window, Proposition~\ref{prop:local-difference} makes the positively weighted coefficient sequence strictly decreasing. Proposition~\ref{prop:local-location} supplies a positive term and a later negative term in that same window. Therefore, after zeros are omitted, there is exactly one positive-to-negative transition in each window. Strict decrease also permits at most one zero within a window. The separate boundary zeros create no additional sign changes.

This covers the lower half. Under \eqref{eq:reciprocity}, an upper-half coefficient in class zero reflects to a positive coefficient in the same class. An upper-half coefficient in class three or four reflects to a lower-half coefficient in class two or one. Such reflected coefficients are negative, apart from the terminal cubic zero block in class three. The endpoint values are

\begin{equation}\label{eq:sequence-ends}
 \begin{aligned}
 &c_0^{(p)}(n)=c_{10pn^2}^{(p)}(n)=1,\\
 &c_3^{(2)}(n)=2,\quad c_4^{(2)}(n)=1,\quad
 c_{20n^2-2}^{(2)}(n)=-1,\quad c_{20n^2-1}^{(2)}(n)=-2,\\
 &c_3^{(3)}(n)=5,\quad c_{5n+4}^{(3)}(n)=9,\quad
 c_{30n^2-5n-2}^{(3)}(n)=-6,\quad c_{30n^2-1}^{(3)}(n)=-3.
 \end{aligned}
\end{equation}

Together with the zero blocks, these observations prove Theorem~\ref{thm:main}.

The local last positive index in Proposition~\ref{prop:local-location} is now the last positive index of the whole sequence. Its four bounds are precisely \eqref{eq:main-location}. Dividing by \(n^2\) and letting \(n\to\infty\) proves \eqref{eq:main-limits}, and hence Theorem~\ref{thm:location}.

\end{proof}

\paragraph{Supplementary material.}

The ancillary files contain the exact transition, prefix, and edge tables, the complete interval bounds underlying the displayed estimates, and programs reproducing these quantities. Parameter intervals have rational endpoints; transcendental expressions are enclosed using interval operations and the explicit remainders stated above. The critical roots are enclosed by rational bisection. The analytic bounds were reevaluated at \(70\) decimal digits for the square and \(100\) digits for the cube; the integer recurrences use arbitrary-precision integers. The records give parameter coverage and intermediate bounds, allowing each finite inequality used in the proof to be located and reproduced.

\end{document}